\newtheorem{theorem}{Theorem}[section]
\newtheorem{lemma}[theorem]{Lemma}
\theoremstyle{definition}
\newtheorem{definition}[theorem]{Definition}
\newtheorem{remark}[theorem]{Remark}
\date{}
\def\m@th{\mathsurround=0pt}
\def\eqal#1{\null\,\vcenter{\openup\jot\m@th
 \ialign{\strut\hfil$\displaystyle{##}$&&$\displaystyle{{}##}$\hfil
 \crcr#1\crcr}}\,}
\def\matrix#1{\null\,\vcenter{\normalbaselines\m@th
 \ialign{\hfil$##$\hfil&&\quad\hfil$##$\hfil\crcr
 \mathstrut\crcr\noalign{\kern-\baselineskip}
 #1\crcr\mathstrut\crcr\noalign{\kern-\baselineskip}}}\,}
\def\N{{\mathbb N}}
\def\R{{\mathbb R}}
\def\divv{{\rm div}\,}
\def\bye{\end{document}}
\newcommand\blfootnote[1]{%
  \begingroup
  \renewcommand\thefootnote{}\footnote{#1}%
  \addtocounter{footnote}{-1}%
  \endgroup
}
\newcommand{\lec}{\lesssim}
\newcommand{\eqnb}{\begin{equation}}
\newcommand{\eqne}{\end{equation}}
\newcommand{\p}{\partial}
\newcommand{\ta}{\tau}
\renewcommand{\div}{\mathrm{div}}
\renewcommand{\d}{\mathrm{d}}
\numberwithin{equation}{section}
\title{On the regularity of axially-symmetric solutions to the incompressible Navier-Stokes equations in a cylinder}
\author{Wojciech S. O\.za\'nski, Wojciech M. Zaj\c{a}czkowski}
\begin{document}

\input amssym.def
\input amssym.tex
\maketitle
\thispagestyle{fancy}

\blfootnote{
\noindent WO: Department of Mathematics, Florida State University, Tallahassee, FL 32301, email: wozanski@fsu.edu}
\blfootnote{\noindent WZ: Institute of Mathematics (emeritus professor), Polish Academy of Sciences, \'Sniadeckich 8, 00-656 Warsaw, Poland, and Institute of Mathematics and Cryptology, Cybernetics Faculty, Military University of Technology, S. Kaliskiego 2, 00-908 Warsaw, Poland, e-mail: wz@impan.pl}

\begin{abstract}
We consider the axisymmetric Navier-Stokes equations in a finite cylinder $\Omega\subset\mathbb{R}^3$. We assume that $v_r$, $v_\varphi$, $\omega_\varphi$ vanish on the lateral boundary $\partial \Omega$ of the cylinder, and that $v_z$, $\omega_\varphi$, $\p_z v_\varphi$ vanish on the top and bottom parts of the boundary $\partial \Omega$, where we used standard cylindrical coordinates, and we denoted by $\omega =\mathrm{curl}\, v$ the vorticity field. We use weighted estimates and $H^3$ Sobolev estimate on the modified stream function to derive three order-reduction estimates. These enable one to reduce the order of the nonlinear estimates of the equations, and help observe that the solutions to the equations are ``almost regular''. We use the order-reduction estimates to show that the solution to the equations remains regular as long as, for any $p\in (6,\infty)$, $\| v_\varphi \|_{L^\infty_t L^p_x}/\| v_\varphi \|_{L^\infty_t L^\infty_x}$  remains  bounded below by a positive number.
\end{abstract}

\noindent
{\bf MSC:} 35A01, 35B01, 35B65, 35Q30, 76D03, 76D05\\

\noindent
{\bf keywords:} Navier-Stokes equations, axially-symmetric solutions, cylindrical domain

\section{Introduction}\label{s1}

We are concerned with the $3$D incompressible Navier-Stokes equations,
\eqnb\label{nse}
\begin{split}
\p_t v - \nu \Delta v + (v\cdot \nabla )v + \nabla p &=f,\\
\div\, v&=0 \quad \text{ in }\Omega^T,
\end{split}
\eqne
under the axisymmetry constraint, where $\Omega^T \coloneqq \Omega \times (0,T)$, $T>0$, $v=v(x,t)\in \R^3$ denotes the velocity field, $p=p(x,t)\in \R$ denotes the pressure function, $f=f(x,t)\in \R^3$ denotes the external force field, and $\nu >0$ denotes the viscosity. As for $\Omega$ we focus on the case of a finite cylinder,
\[
\Omega=\left\lbrace x\in\R^3\colon x_1^2+x_2^2<R^2,|x_3|<a\right\rbrace,
\]
where $a,R>0$ are constants. We note that 
\[
S\coloneqq \p \Omega = S_1 \cup S_2,
\]
where
\[
\begin{split}
S_1&=\left\lbrace x\in\R^3\colon\sqrt{x_1^2+x_2^2}=R,\ x_3\in [-a,a]\right\rbrace ,\\
S_2&=\left\lbrace x\in\R^3\colon\sqrt{x_1^2+x_2^2}<R,\ x_3\in\{-a,a\}\right\rbrace 
\end{split}
\]
denote the lateral boundary and the top and bottom parts of the boundary, respectively.

In order to state the boundary conditions  stating our main result we describe the problem in cylindrical coordinates, that is we will write 
\[
x_1=r\cos\varphi,\ \ x_2=r\sin\varphi,\ \ x_3=z,
\]
and we will use standard cylindrical unit vectors, so that, for example, 
\[
v=v_r \bar e_r + v_\varphi\bar e_\varphi + v_z\bar e_z.
\]
We will denote partial derivatives by using the subscript comma notation, e.g. 
\[
v_{r,z} \coloneqq \p_z v_r.
\]
We assume the boundary conditions 
\eqnb\label{bcs}
\begin{split}
&v_r=v_\varphi = \omega_\varphi =0 \qquad \text{ on }  S_1^T=S_1\times(0,T),\\
&v_z = \omega_\varphi =  v_{\varphi,z} =0 \qquad \text{ on }   S_2^T=S_2\times(0,T),
\end{split}
\eqne
where $\omega \coloneqq \mathrm{curl} \, v$ denotes the vorticity vector and we assume initial condition $v(0)=v_0$, where $v_0$ is a given divergence-free vector field satisfying the same boundary conditions. We note that such boundary conditions have first appeared in the  work of Ladyzhenskaya \cite{L1}. In a sense, the boundary conditions \eqref{bcs} are natural, since, when considering the vorticity-stream function formulation (see \eqref{1.14} below), boundary terms resulting from the energy estimates will naturally involve $\omega_\varphi|_{S}$. This, together with the no-penetration condition naturally lead to \eqref{bcs}.

We will also denote the \emph{swirl} by
\[
u\coloneqq r v_\varphi .
\label{swirl}
\]
 Note that 
\begin{equation}\eqal{
&\omega_r=-v_{\varphi,z}=-{1\over r}u_{,z},\cr
&\omega_\varphi=v_{r,z}-v_{z,r},\cr
&\omega_z={1\over r}(rv_\varphi)_{,r}=v_{\varphi,r}+{v_\varphi\over r}={1\over r}u_{,r},\cr}
\label{1.13}
\end{equation}
so that the boundary conditions \eqref{bcs} imply in particular that
\eqnb\label{bcs1}
\begin{split}
&\omega_r = v_{z,r} =u=0, \,\, \omega_z=v_{\varphi,r} \qquad \text{ on }  S_1^T,\\
&\omega_r = v_{r,z} =\omega_{z,z} =u_{,z}=0 \qquad \text{ on }   S_2^T,
\end{split}
\eqne
The Navier-Stokes equations \eqref{nse} become, in cylindrical coordinates, 
\eqnb\label{nse_cylin}
\begin{split}
v_{r,t}+v\cdot\nabla v_r-{v_\varphi^2\over r}-\nu\Delta v_r+\nu{v_r\over r^2}&=-p_{,r}+f_r,\\
v_{\varphi,t}+v\cdot\nabla v_\varphi+{v_r\over r}v_\varphi-\nu\Delta v_\varphi+\nu{v_\varphi\over r^2}&=f_\varphi,\\
v_{z,t}+v\cdot\nabla v_z-\nu\Delta v_z&=-p_{,z}+f_z,\\
(rv_r)_{,r}+(rv_z)_{,z}&=0,
\end{split}
\eqne
where
\[v\cdot\nabla=(v_r\bar e_r+v_z\bar e_z)\cdot\nabla=v_r\partial_r+v_z\partial_z,\qquad \Delta u={1\over r}(ru_{,r})_{,r}+u_{,zz}.
\]

On the other hand, the vorticity formulation becomes 
\eqnb\label{1.9}
\begin{split}
\omega_{r,t}+v\cdot\nabla\omega_r-\nu\Delta\omega_r+\nu{\omega_r\over r^2}&=\omega_rv_{r,r}+\omega_zv_{r,z}+F_r,\\
\omega_{\varphi,t}+v\cdot\nabla\omega_\varphi-{v_r\over r}\omega_\varphi-\nu\Delta\omega_\varphi+\nu{\omega_\varphi\over r^2}&={2\over r}v_\varphi v_{\varphi,z}+F_\varphi,\\
\omega_{z,t}+v\cdot\nabla\omega_z-\nu\Delta\omega_z &=\omega_rv_{z,r}+\omega_zv_{z,z}+F_z,
\end{split}
\eqne
where $F\coloneqq \mathrm{curl}\, f$ and the swirl equation is
\eqnb\label{u_eq}
u_{,t}+v\cdot\nabla u-\nu\Delta u+{2\nu\over r}u_{,r} =rf_\varphi=:  f_0.
\eqne
We will use the notation
\begin{equation}
(\Phi,\Gamma)=\left( \frac{\omega_r}r,\frac{\omega_\varphi}r\right), 
\label{1.16}
\end{equation}
and we note that $\Phi$, $\Gamma$ satisfy 
\begin{align}
\Phi_{,t}+v\cdot\nabla\Phi-\nu\bigg(\Delta+{2\over r}\partial_r\bigg)\Phi-(\omega_r\partial_r+\omega_z\partial_z){v_r\over r}=F_r/r\equiv\bar F_r, \label{1.17}\\ 
\Gamma_{,t}+v\cdot\nabla\Gamma-\nu\bigg(\Delta+{2\over r}\partial_r\bigg)\Gamma+2{v_\varphi\over r}\Phi=F_\varphi/r\equiv\bar F_\varphi, \label{1.18}
\end{align}
recall \cite[(1.6)]{CFZ}. Moreover, by  \eqref{bcs}, \eqref{bcs1}, $\Gamma$ and $\Phi$ satisfy the boundary conditions 
\eqnb\label{bcs_PhiGamma}
\Phi=\Gamma=0 \qquad \text{ on }S^T.
\eqne
We note that $\eqref{nse_cylin}_4$ implies existence of the \emph{stream function} $\psi$ which solves the problem
\eqnb
\label{1.14}
\begin{split}
-\Delta\psi+{\psi\over r^2}&=\omega_\varphi,\\
\psi|_S&=0.
\end{split}
\eqne
Then $v$ can be expressed in terms of the stream function,
\begin{equation}\eqal{
&v_r=-\psi_{,z},\hspace{2cm} v_z={1\over r}(r\psi)_{,r}=\psi_{,r}+{\psi\over r},\cr
&v_{r,r}=-\psi_{,zr},\hspace{1.65cm} v_{r,z}=-\psi_{,zz},\cr
&v_{z,z}=\psi_{,rz}+{\psi_{,z}\over r},\hspace{0.85cm} v_{z,r}=\psi_{,rr}+{1\over r}\psi_{,r}-{\psi\over r^2}.\cr}
\label{1.15}
\end{equation}
We will also use the \emph{modified stream function},
\begin{equation}
\psi_1\coloneqq \frac{\psi}{r},
\label{1.21}
\end{equation}
which satisfies
\eqnb\label{psi1_eq}
\begin{split}-\Delta\psi_1-{2\over r}\psi_{1,r}&=\Gamma,\\
\left. \psi_1 \right|_S&=0,
\end{split}
\eqne
and lets us express $v$ as 
\begin{equation}\eqal{
&v_r=-r\psi_{1,z},\hspace{2cm} v_z=(r\psi_1)_{,r}+\psi_1=r\psi_{1,r}+2\psi_1,\cr
&v_{r,r}=-\psi_{1,z}-r\psi_{1,rz},\hspace{0.5cm} v_{r,z}=-r\psi_{1,zz},\cr
&v_{z,z}=r\psi_{1,rz}+2\psi_{1,z},\hspace{0.6cm} v_{z,r}=3\psi_{1,r}+r\psi_{1,rr}.\cr}
\label{1.22}
\end{equation}
We note that the boundary conditions \eqref{bcs} imply that
\eqnb\label{bc_psi1,zz}
\psi_{1,zz} =0 \qquad \text{ on } S_2.
\eqne
Indeed, restricting the PDE in \eqref{psi1_eq} onto $S_2$ gives $\psi_{1,zz}=-\Gamma$ on $S_2$, and recalling \eqref{bcs} that $\Gamma|_{S_2}=0$, by \eqref{bcs}, gives \eqref{bc_psi1,zz}.\\

We emphasize that the global well-posedness of the equations (either in the above setting or on $\R^3$) remains an important open problem. We only note a few references on the topic \cite{CFZ,KP,NZ,NZ1,NP1,NP2,OP} and we refer the reader to \cite{Z1,Z2} for further discussion. One of the main properties of the axisymmetric setting of the equations is the maximum principle \eqref{2.9} for the swirl. We note that this is critical in the sense that if $v_\lambda (x,t) \coloneqq \lambda v (\lambda x , \lambda^2 t)$ then $\sup |r v_\lambda |=\sup |r v |$. In a sense, one of the main focuses of the analysis of the axisymmetric Navier-Stokes equations is to make the best possible use of the maximum principle and other properties (such as the energy estimates, Lemma~\ref{l5.1}) of the swirl. 

On the other hand, it was demonstrated by \cite{CFZ} that the solution $v$ is controlled by the energy norm of $\Phi$, $\Gamma$,
\eqnb\label{def_V}
X(t) \coloneqq \| \Phi \|_{V(\Omega^t)} + \|\Gamma \|_{V(\Omega^t)} \qquad \text{ where }\|w\|_{V(\Omega^t)}\coloneqq |w|_{2,\infty,\Omega^t}+|\nabla w|_{2,\Omega^t},
\eqne
who also showed that $X$ is under control provided $v_\varphi$ is sufficiently small near the axis. This has recently been developed in \cite{OP} to a setting that allows quantitative control of the solution $v$ and all its derivatives in terms of the weak $L^3$ norm of $v$ only. Here, and below, $|\cdot |_{p,q ,\Omega^t } = \| \cdot \|_{L_q (0,t;L_p(\Omega ))}$ and $|\cdot |_{p,\Omega^t} = |\cdot |_{p,p,\Omega^t}$ denote space-time Lebesgue spaces, see \eqref{def_Lp}. \\

\noindent Our first result is concerned with another quantitative control using $X$.
\begin{theorem}[Order-reduction estimates]\label{T1}
Let $v$ be a regular solution of \eqref{nse}--\eqref{bcs} on $(0,T)$. Then  
\eqnb\label{est1}
| \Phi |_{2,\Omega^t } \lec_{\delta } (1+|v_\varphi |_{\infty, \Omega^t }^{\delta} )  X^{\frac12} +1
\eqne
for every $\delta\in (0,1)$, $t\in (0,T)$. Moreover,
\eqnb\label{est2}
|v_\varphi |_{\infty , \Omega^t} \lec  X^{3/4} + 1.
\eqne
and, if $d\in (3,\infty )$ is such that
\begin{equation}
\frac{|v_\varphi|_{d,\infty,\Omega^t} }{|v_\varphi|_{\infty,\Omega^t}} \ge c_0
\label{4.29}
\end{equation}
for some $c_0>0$, then
\eqnb\label{est3}
| v_\varphi |_{d,\infty,\Omega^t} \lec_{d,\delta  } {c_0^{1-\frac{d}{2-\delta  }}}  \left( X(t)^{\frac{1}{4-2\delta  }} +1\right).
\eqne 
\end{theorem}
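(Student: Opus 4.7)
The argument combines three tools: elliptic regularity (with weighted estimates near the axis $r=0$) for the modified stream function problem \eqref{psi1_eq}; $L^2$-energy testing of the $\Phi$- and $\Gamma$-equations \eqref{1.17}--\eqref{1.18}; and the three-dimensional Gagliardo--Nirenberg inequality $\|w\|_{L^\infty_x}\lec \|w\|_{L^2_x}^{1/4}\|w\|_{H^2_x}^{3/4}$. Elliptic theory applied to \eqref{psi1_eq} and its differentiations yields $\|\psi_1\|_{H^3_x}\lec \|\Gamma\|_{H^1_x}$, and then $H^2\hookrightarrow L^\infty$ in 3D controls $\nabla\psi_1\in L^\infty_x$ pointwise in terms of $\|\Gamma\|_{H^1_x}$; via \eqref{1.22} this furnishes $L^\infty_x$-bounds on $v_r/r=-\psi_{1,z}$, on $\psi/r=\psi_1$, and on the first derivatives of $v_r,v_z$ in terms of $\|\Gamma\|_{H^1_x}$.

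For \eqref{est1} I would test \eqref{1.18} with $\Gamma$. After absorbing the dissipation, the non-trivial term is the nonlinear coupling $2\int_{\Omega^t}(v_\varphi/r)\Phi\Gamma$; I bound it by H\"older with the exponent triple $(2/(1-\delta),2,2/\delta)$ applied to $(v_\varphi,\Phi,\Gamma)$. The interpolation $\|v_\varphi\|_{L^{2/(1-\delta)}_x}\le \|v_\varphi\|_{L^2_x}^{1-\delta}\|v_\varphi\|_{L^\infty_x}^\delta$ then produces the factor $|v_\varphi|_\infty^\delta$, while axisymmetric Sobolev--Hardy embeddings (exploiting the effectively two-dimensional meridional structure) together with $\|\Gamma\|_V\le X$ give the required control of $\|\Gamma\|_{L^{2/\delta}_x}$ by $\|\Gamma\|_{H^1_x}$. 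Absorbing $|\Phi|_{2,\Omega^t}$ into the left side then yields \eqref{est1}.

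For \eqref{est2} I apply the Gagliardo--Nirenberg inequality pointwise in $t$ to $v_\varphi$ and take $\sup_t$, reducing the problem to showing $\|v_\varphi\|_{L^\infty_t H^2_x}\lec X$ (the $L^\infty_tL^2_x$ piece is the standard energy estimate for \eqref{nse_cylin}$_2$). The $H^2$-bound uses $v_{\varphi,z}=-r\Phi$ together with the identity $v_{\varphi,r}+v_\varphi/r=\omega_z$ and elliptic bootstrapping through the swirl equation \eqref{u_eq} (whose solution $u=rv_\varphi$ is bounded by the maximum principle), and reduces $\nabla^2 v_\varphi$ to controlled combinations of $\Phi,\Gamma$ and their first derivatives. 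The Gagliardo--Nirenberg exponent $3/4$ is precisely the exponent that appears in \eqref{est2}.

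For \eqref{est3} I rearrange \eqref{4.29} as $|v_\varphi|_{\infty,\infty,\Omega^t}\le c_0^{-1}|v_\varphi|_{d,\infty,\Omega^t}$ and plug into \eqref{est1} to obtain $|\Phi|_{2,\Omega^t}\lec_\delta (1+c_0^{-\delta}|v_\varphi|_{d,\infty,\Omega^t}^\delta)X^{1/2}+1$. A reverse interpolation bound of the form $|v_\varphi|_{d,\infty,\Omega^t}\lec |\Phi|_{2,\Omega^t}^\alpha |v_\varphi|_{\infty,\infty,\Omega^t}^{1-\alpha}$ follows from $v_{\varphi,z}=-r\Phi$ together with Poincar\'e (using $v_\varphi|_{S_1}=0$) and $L^2$--$L^\infty$ interpolation; substituting produces an algebraic inequality in $|v_\varphi|_{d,\infty,\Omega^t}$ whose solution yields the exponents $X^{1/(4-2\delta)}$ and $c_0^{1-d/(2-\delta)}$. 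The main technical obstacle is the coupled bookkeeping of the three estimates together with the sharp weighted Hardy inequalities needed whenever one divides by $r$---in particular, to push $\|\psi_1\|_{H^3_x}\lec\|\Gamma\|_{H^1_x}$ through despite the singular $2/r$ coefficient in \eqref{psi1_eq}.
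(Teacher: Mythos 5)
There are genuine gaps in all three parts, the most serious being in \eqref{est1}. Testing \eqref{1.18} with $\Gamma$ produces an estimate whose left-hand side consists of norms of $\Gamma$; the coupling term $2\int (v_\varphi/r)\Phi\Gamma$ contains $\Phi$ only on the right, and there is no mechanism for ``absorbing $|\Phi|_{2,\Omega^t}$ into the left side'' of a $\Gamma$-energy identity. More fundamentally, \eqref{est1} is an \emph{order reduction}: it bounds $|\Phi|_{2,\Omega^t}$ by $X^{1/2}$ rather than the trivial $X$. The paper obtains this from the energy estimate for $(\omega_r,\omega_z)$ (Lemma~\ref{l6.1}), where $|\omega_r/r|_{2,\Omega^t}^2=|\Phi|_{2,\Omega^t}^2$ appears on the \emph{left} (coming from the $\nu\omega_r/r^2$ term) and the vortex-stretching term $J$ is shown, via the weighted estimate \eqref{2.26}, the $H^3$ estimates of Lemma~\ref{l3.2}, and the swirl bounds of Lemma~\ref{l5.1}, to be \emph{linear} in $|\nabla\Gamma|_{2,\Omega^t}\le X$; taking square roots then gives the half power. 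Your route cannot produce that gain.

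For \eqref{est2}, the reduction to $\|v_\varphi\|_{L^\infty_t H^2_x}\lec X$ fails: $X$ controls $\nabla\Phi$ and $\nabla\Gamma$ only in $L^2$ of time, so second derivatives of $v_\varphi$ (e.g. $v_{\varphi,zz}=-r\Phi_{,z}$) are controlled in $L^2_t L^2_x$, not $L^\infty_t L^2_x$. The paper avoids this by applying the Gagliardo--Nirenberg inequality to $\psi_{1,z}=-v_r/r$ \emph{inside} the time integral arising from the $L^s$ energy identity for $v_\varphi$ (taking $s\to\infty$), using $|D^2\psi_{1,z}|_{2,\Omega^t}\lec|\Gamma_{,z}|_{2,\Omega^t}\le X$; the exponent $3/4$ comes from interpolating $\psi_{1,z}$, not $v_\varphi$. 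For \eqref{est3}, the proposed ``reverse interpolation'' $|v_\varphi|_{d,\infty,\Omega^t}\lec|\Phi|_{2,\Omega^t}^{\alpha}|v_\varphi|_{\infty,\Omega^t}^{1-\alpha}$ mixes $L^2$-in-time and $L^\infty$-in-time quantities in a direction interpolation does not go, and the Poincar\'e step is mismatched: $v_\varphi$ vanishes at $r=R$, which pairs with $v_{\varphi,r}$ (related to $\omega_z=u_{,r}/r$), not with $v_{\varphi,z}=-r\Phi$. The paper instead derives \eqref{est3} directly from the $L^s$ identity by writing $\tfrac1s\tfrac{\d}{\d t}|v_\varphi|_{s,\Omega}^s=\tfrac{1}{4-2\delta}|v_\varphi|_{s,\Omega}^{s-4+2\delta}\tfrac{\d}{\d t}|v_\varphi|_{s,\Omega}^{4-2\delta}$, peeling off $|v_\varphi|_{\infty,\Omega}^{s-4+2\delta}$ on the right by the maximum principle, and converting it to $c_0^{-(s-4+2\delta)}|v_\varphi|_{s,\Omega}^{s-4+2\delta}$ via \eqref{4.29}, which closes a differential inequality for $|v_\varphi|_{s,\Omega}^{4-2\delta}$; estimate \eqref{est1} plays no role there. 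Finally, your blanket claim $\|\psi_1\|_{H^3_x}\lec\|\Gamma\|_{H^1_x}$ is stronger than what the paper proves (Lemma~\ref{l3.2} controls only the $z$-differentiated third derivatives) and is not needed in the correct argument.
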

We note that the proof of  Theorem~\ref{T1}, relies on the weighted estimates of the modified stream function $\psi_1$ introduced by \cite{NZ} (see also Lemma~\ref{l2.18}), new $H^2$ and $H^3$ elliptic estimates on $\psi_1$ (see Lemmas~\ref{l3.1} and \ref{l3.2}), and swirl estimates (see Lemma~\ref{l5.1}).

We emphasize that the estimates \eqref{est1}--\eqref{est3} are valid for regular solutions. In particular, the estimates use the fact that smooth $v$ admits the following expansions near the axis
\begin{equation}
v_r(r,z,t)=a_1(z,t)r+a_2(z,t)r^3+\cdots,
\label{2.19}
\end{equation}
\begin{equation}
v_\varphi(r,z,t)=b_1(z,t)r+b_2(z,t)r^3+\cdots,
\label{2.20}
\end{equation}
\begin{equation}
\psi(r,z,t)=d_1(z,t)r+d_2(z,t)r^3+\cdots\ 
\label{2.21}
\end{equation}
In particular
\begin{equation}
\psi_1(r,z,t)=d_1(z,t)+d_2(z,t)r^2+\cdots,
\label{2.22}
\end{equation}
\begin{equation}
\psi_{1,r}(r,z,t)=2d_2(z,t)r+\cdots\qquad\qquad
\label{2.23}
\end{equation}
which was shown by Liu \& Wang \cite[Corollary~1 and Lemma~2]{LW}. For example, for \eqref{2.19}--\eqref{2.23} it suffices that $v\in W_2^{4,2}(\Omega^t)$, $\nabla p\in W_2^{2,1}(\Omega^t)$. We show in Section~\ref{sec_reg} that this is true as long as $X(t)$ remains bounded. \\

We emphasize that, in \eqref{est1}, the $L^2$ norm of $\Phi$ is controlled by (almost) square root of $X$, while \eqref{est3} lets us control $|v_\varphi |_{d,\infty}$ in terms of (almost) the fourth root of $X$.
In particular, estimates \eqref{est1}--\eqref{est3} allow us to analyze certain energy norms of the solution $v$ of the axisymmetric Navier-Stokes equations \eqref{nse} in the way where the order of the nonlinear estimates can be reduced. In particular, in our second result, we show that condition \eqref{4.29} implies regularity of solution.

\begin{theorem}[Conditional regularity]\label{t1.3}
Let $v$ be a smooth solution on time interval $(0,T)$. Suppose that there exist $d\in (6,\infty)$ and a constant $c_0>0$ such that \eqref{4.29} holds on $(0,T)$.  Then 
\eqnb\label{X_bd}
X(t) \lec_{\delta , d,c_0} 1
\eqne
for $t\in (0,T)$.
\end{theorem}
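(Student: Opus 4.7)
The plan is to close an energy estimate for $X(t)^2$ obtained from \eqref{1.17}--\eqref{1.18}, using the three order-reduction estimates \eqref{est1}--\eqref{est3}. Concretely, I would multiply \eqref{1.17} by $\Phi$ and \eqref{1.18} by $\Gamma$, integrate over $\Omega$, and integrate by parts using \eqref{bcs_PhiGamma}. The convection terms $v\cdot\nabla$ vanish thanks to $\mathrm{div}\,v=0$ together with the boundary conditions \eqref{bcs} on $v$, while the modified Laplacian $-\nu(\Delta+\frac{2}{r}\p_r)$ yields $\nu(|\nabla\Phi|_2^2+|\nabla\Gamma|_2^2)$ plus a nonnegative boundary/axis contribution. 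Integrating in $s\in(0,t)$ and taking the supremum, this gives
\[
X(t)^2 \lec X(0)^2 + |I_1(t)| + |I_2(t)| + (\text{forcing}),
\]
where
\[
I_1(t) \coloneqq \int_0^t\!\int_\Omega (\omega_r\p_r+\omega_z\p_z)\frac{v_r}{r}\,\Phi\,\d x\,\d s, \qquad I_2(t) \coloneqq -2\int_0^t\!\int_\Omega \frac{v_\varphi}{r}\,\Phi\,\Gamma\,\d x\,\d s.
\]

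For the coupling term $I_2$, the key point is that, since $d>6$, the Hölder exponent $p\coloneqq \frac{2d}{d-2}$ satisfies $p<3$, so Sobolev interpolation gives $|\Gamma|_p \lec |\Gamma|_2^{1-\theta}|\nabla\Gamma|_2^{\theta}$ with $\theta=\theta(d)<\tfrac12$. Using this, Hölder in space (with a Hardy-type bound handling the $1/r$ factor via the expansions \eqref{2.19}--\eqref{2.23}), and Hölder in time, I would obtain
\[
|I_2(t)| \lec \int_0^t |v_\varphi|_{d}\,|\Phi|_{2}\,|\Gamma|_{2}^{1-\theta}|\nabla\Gamma|_2^{\theta}\,\d s;
\]
Young's inequality then absorbs the $|\nabla\Gamma|_2^2$ factor into the dissipation, and substituting \eqref{est1} together with \eqref{est3} produces $|I_2(t)|\lec_{d,\delta,c_0} X(t)^{2-\epsilon}+1$ for some $\epsilon=\epsilon(d,\delta)>0$; the strict gain $\epsilon>0$ is precisely what the hypothesis $d>6$ (as opposed to $d>3$) buys.

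For the stretching term $I_1$, I would use $v_r/r=-\psi_{1,z}$ from \eqref{1.22} together with $\omega_r=r\Phi$ and $\omega_z=v_{\varphi,r}+v_\varphi/r$, rewriting
\[
I_1(t) = -\int_0^t\!\int_\Omega \bigl(r\Phi\,\psi_{1,zr} + \omega_z\,\psi_{1,zz}\bigr)\,\Phi\,\d x\,\d s.
\]
Both pieces carry second derivatives of $\psi_1$, which the $H^3$ elliptic estimate (Lemma~\ref{l3.2}) applied to \eqref{psi1_eq}--\eqref{bc_psi1,zz} controls in terms of $\|\Gamma\|_{H^1}$ and lower-order norms of $v_\varphi$. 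Combining Hölder, Sobolev embedding, and the order-reduction estimates \eqref{est1}--\eqref{est2} then yields $|I_1(t)|\lec X(t)^{2-\epsilon'}+1$ for some $\epsilon'>0$.

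Substituting both bounds into the energy inequality and applying Young's inequality yields $X(t)^2\lec 1 + X(t)^{2-\min(\epsilon,\epsilon')}$, from which \eqref{X_bd} follows immediately. The main obstacle is the bookkeeping of powers of $X$: each of \eqref{est1}--\eqref{est3} improves over the critical scaling only slightly, so the Hölder/interpolation exponents must be chosen delicately to obtain a strictly positive exponent gain. Producing such a gain hinges simultaneously on $d>6$ (which renders \eqref{est3} subcritical for the $\Phi$-$\Gamma$ coupling) and on the $H^3$ bound for $\psi_1$ (which upgrades the vortex-stretching estimate beyond what $H^2$ alone would allow).
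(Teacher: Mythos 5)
Your overall architecture (energy identity for $\Phi,\Gamma$ $+$ the three order-reduction estimates $+$ absorption of a power $X^{2-\epsilon}$) matches the paper, and your treatment of the coupling term $I_2$ is essentially the paper's: they too combine H\"older with the Hardy/Hardy-interpolation inequalities and then insert \eqref{est1} and \eqref{est3}, with $d>6$ entering through the same exponent threshold (in the paper via $\theta>\varepsilon/2 \Leftrightarrow \varepsilon_2<\frac{d-6}{d+6}\varepsilon_1$; in your version via $\frac{2d}{d-2}<3$). The one caveat there is that you must distribute the weight $1/r$ and the powers of $v_\varphi$ so that the time exponents match ($L^2_t$ against $L^2_t$); the paper does this by writing $\frac{v_\varphi}{r}\Phi\Gamma=(rv_\varphi)^{1-\varepsilon}v_\varphi^{\varepsilon}\frac{\Phi}{r^{1-\varepsilon_1}}\frac{\Gamma}{r^{1-\varepsilon_2}}$ and using the maximum principle $|rv_\varphi|\le D_2$, so that only a tiny power $\varepsilon$ of $v_\varphi$ is measured in $L^d$.

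The genuine gap is in your treatment of the vortex-stretching term $I_1$. In the decomposition $I_1=-\int(r\Phi\,\psi_{1,zr}+\omega_z\psi_{1,zz})\Phi$ you keep \emph{two} undifferentiated vorticity factors ($\Phi^2$, or $\omega_z\Phi$) against a second derivative of $\psi_1$. Direct H\"older/Sobolev then forces something like $\|\Phi\|_{L^4_{t,x}}^2$ or $\|\omega_z\|_{L^\infty_tL^2}\|\psi_{1,zz}\|_{L^\infty_tL^3}\|\Phi\|_{L^2_tL^6}$, which by the available interpolations is of order $X^{5/2}$ to $X^{3}$ — not $X^{2-\epsilon'}$; note also that $\omega_z$ is not part of $X$ at all (the paper needs a separate lemma, Lemma~\ref{l6.1}, just to bound $\|\omega_z\|_{V}$, and only in terms of $X^{1/2}$). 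The paper's essential move, which your sketch omits, is to integrate by parts so that the derivative lands on $\Phi$: using $\omega_r=-v_{\varphi,z}$, $\omega_z=(rv_\varphi)_{,r}/r$ one obtains $-\int v_\varphi(\psi_{1,zr}\Phi_{,z}-\psi_{1,zz}\Phi_{,r})$, which is bounded by $D_2\,|\nabla\Phi|_2|\nabla\Gamma|_2\bigl(1+\delta^{-1}R^{\delta}|v_\varphi|_\infty^{\delta}D_2^{-\delta}\bigr)$ via the maximum principle, \eqref{3.24}, \eqref{3.13} and Hardy. Even then this term is \emph{exactly} of order $X^2$ (up to the $|v_\varphi|_\infty^{\delta}$ factor), not $X^{2-\epsilon'}$: it cannot be beaten by an order-reduction estimate and must instead be absorbed into the dissipation, which the paper achieves by multiplying the $\Gamma$-identity by $cD_2^2(1+|v_\varphi|_\infty^{2\delta}R^{2\delta}/(\delta^2D_2^{2\delta}))$ before adding it to the $\Phi$-identity — this is precisely the origin of the prefactor $(1+|v_\varphi|_{\infty,\Omega^t}^{\delta})$ in \eqref{005}, which is then tamed by \eqref{est2}. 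Without this integration by parts and the weighted combination of the two energy identities, the estimate for the stretching term does not close.
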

We emphasize that the implicit constant in \eqref{X_bd} does not depend on time. Here, and throughout the paper, we use the notation ``$\lec_{a,f}$'' to denote ``$\leq C$'', where $C>0$ depends on $a$ and $f$''.

We note that this work is a continuation of \cite{Z1, Z2}, where a Serrin type result was proved, which is a consequence of the fact that $\psi_1=\psi/r$, where $\psi$ is the stream function and $r$ is the radius, must vanish on the axis of symmetry. Vanishing of $\psi_1|_{r=0}$ in \cite{Z1, Z2} follows from exploiting weighted Sobolev spaces with elements vanishing on the axis of symmetry. In this paper we were able to drop the restriction $\psi_1|_{r=0}=0$ because such weighted spaces were not used.

However, the properties are not sufficient to derive estimates \eqref{psi1_h2} and \eqref{3.13}--\eqref{3.15}, which are crucial in controlling $X(t)$. To prove these estimates we make use of the expansions \eqref{2.19}--\eqref{2.23}, due to Liu and Wang \cite{LW}.

We recall (from the classical work of Ladyzhenskaya \cite[Chapter~6, Section~4, Theorem~4]{L2}, see also \cite{L1} and the partial regularity theory of \cite{CKN}) that any singularity of axisymmetric solutions to \eqref{nse} must occur on the axis of symmetry.  The methods presented in this paper make use of regular solutions, for which there are no singularities at the axis, and so the expansions \eqref{2.19}--\eqref{2.23} are valid. Whether it is possible to control $X(t)$ without exploiting these expansions, remains an interesting open problem.

\subsection{Proof of Theorem~\ref{t1.3}}
We use an energy estimate on $\Phi$, $\Gamma$ to obtain
\eqnb\label{005}
X(t)^2 \lec_{\delta } 1+ I (1+ |v_\varphi |^{\delta }_{\infty, \Omega^t } )
\eqne
where
\eqnb\label{def_of_I}
I\coloneqq  \left| \int_{\Omega^t } \frac{v_\varphi }r \Phi \Gamma \right| .
\eqne
and the implicit constant (here and below) depends on the initial data $v_0$ and constants $D_1,\ldots , D_{12}$, see \eqref{4.1} for details (where the constant is quantified).

 We show in \eqref{4.12} that
\eqnb\label{I_control}
I \lec_{\varepsilon_1, \varepsilon_2} |v_{\varphi}|_{d,\infty,\Omega^t}^\varepsilon |\Phi |^\theta_{2,\Omega^t} X(t)^{2-\theta}
\eqne
for every $d>3$, given $\varepsilon_1, \varepsilon_2>0$ are sufficiently small such that
\eqnb\label{temp1}
\theta\coloneqq \big(1-{3\over d}\big)\varepsilon_1-{3\over d}\varepsilon_2\in (0,1), \quad 1+\frac{\varepsilon_2}{\varepsilon_1} \leq \frac{d}3,
\eqne
and $\varepsilon \coloneqq \varepsilon_1+\varepsilon_2$. We will fix $\varepsilon_1, \varepsilon_2$ below, but we see at this point that achieving \eqref{temp1} is possible, by first picking $\varepsilon_1$ small enough so that $(1-3/d)\varepsilon_1 <1$ (which is possible, as $d>3$), and then taking  $\varepsilon_2 $ sufficiently small so that $\theta >0$ and $\varepsilon_2 < \varepsilon_1 (d/3-1)$, which gives \eqref{temp1}.

Applying \eqref{est1} in \eqref{I_control}, and plugging this in \eqref{005} we thus obtain 
\eqnb\label{01}
\begin{split}
X(t)^2 &\lec_{\varepsilon_1, \varepsilon_2, \delta } 1+  (1+|v_\varphi |_{\infty, \Omega^t }^{\delta } )^\theta  (1+ |v_\varphi |^{\delta }_{\infty, \Omega^t } )|v_{\varphi}|_{d,\infty,\Omega^t}^\varepsilon X(t)^{2-\theta/2}\\
&\lec_{\varepsilon_1, \varepsilon_2, \delta } 1+  \left( 1+|v_\varphi |_{\infty, \Omega^t }^{2\delta  \theta } \right)|v_{\varphi}|_{d,\infty,\Omega^t}^\varepsilon X(t)^{2-\theta/2}.
\end{split}
\eqne
We now let $d\in (6,\infty)$ be such that the assumption \eqref{4.29} holds, and we apply the order-reduction estimate \eqref{est3} for $|v_\varphi |_{d,\infty,\Omega^t }$ in \eqref{01} to obtain
\eqnb\label{003}
X(t)^2 \lec_{\varepsilon_1, \varepsilon_2 , \delta , c_0, d} 1+  X(t)^{2-\frac{\theta}2 + \frac{\varepsilon}{4-2\delta } + \frac{3\delta  \theta }2},
\eqne
where we also used the order-reduction estimate \eqref{est2} for $|v_\varphi |_{\infty,\Omega^t}$.
We now want to determine $\varepsilon_1,\varepsilon_2,\delta $ such that the last power is strictly less than $2$, so that the last term can be absorbed by the left-hand side. To this end, we note that $\theta$ (defined in \eqref{temp1}) satisfies  
\eqnb\label{002}
\theta > \frac{\varepsilon}2\quad \text{ if and only if } \quad \varepsilon_2  < \frac{d-6}{d+6} \varepsilon_1.
\eqne
Thus, we first fix $\varepsilon_1>0$  small enough such that $(1-3/d)\varepsilon_1 < 1$ (so that $\theta <1$). We then take $\varepsilon_2 >0$ sufficiently small so that both \eqref{temp1} and \eqref{002} hold. We note that this fixes $\theta\in (0,1)$ and also guarantees that
\[
2-\frac{\theta}2 + \frac{\varepsilon}4 <2.
\]
Therefore, since the above inequality is sharp, we can finally fix $\delta >0$ to be sufficiently small so that
\[
2-\frac{\theta}2 + \frac{\varepsilon}{4-2\delta } + \frac{3\delta  \theta }2 <2.
\]
This ensures that the power of $X$ on the right-hand side of \eqref{003} is smaller than $2$, which implies that 
\[
X(t)\lec_{\varepsilon_1, \varepsilon_2 , \delta , c_0, d} 1 
\]  for all $t\in (0,T)$, as required. \\

We note that boundedness of $X(t)$ implies regularity, which follows from the fact that $\| v \|_{W_2^{4,2}(\Omega^t )}+ \| \nabla p \|_{W_2^{2,1} (\Omega^t )} < \infty$,  see Theorem~\ref{thm_X_is_boss} below. In order to state it we first introduce constants which depend on the initial data and forcing,
\[
\begin{split}
D_1&\coloneqq \|f\|_{L_2(\Omega^t)}+\|v(0)\|_{L_2(\Omega)},\\
D_2&\coloneqq \|f_0\|_{L_1(0,t;L_\infty(\Omega))}+\|u(0)\|_{L_\infty(\Omega)},\\
D_3^2 &\coloneqq \frac1\nu \left( |\bar F_r |_{6/5,2,\Omega^t}^2 + |\bar F_\varphi |_{3,2,\Omega^t}^2  \right)+ |\Gamma (0) |_{2,\Omega }^2 +  |\Phi (0) |_{2,\Omega }^2,\\
D_4^2&\coloneqq (D_1^2D_2^2+|u_{,z}(0)|_{2,\Omega }^2+|f_0|_{2,\Omega^T}^2)/\nu,\\
D_5^2&\coloneqq D_1^2 \left(1+\frac{D_2^2}{\nu} \right)+|u_{,r}(0)|_{2,\Omega}^2+\frac{|f_0|_{2,\Omega^t}^2}{\nu },\\
D_6^2&\coloneqq  {D_2^2\over \nu \min\{1,D_2^2\}}\left( \|\bar F_\varphi\|^2_{L_2(0,t;L_{6/5}(\Omega))}+\|\bar F_r\|^2_{L_2(0,t;L_{6/5}(\Omega))}+\|\Gamma(0)\|_{L_2(\Omega)}^2\right)+ {1\over\min\{1,D_2^2\}}\|\Phi(0)\|^2_{L_2(\Omega)},\\
D_7^2&\coloneqq \|F_r\|_{L_2(0,t;L_{6/5}(\Omega))}^2+\|F_z\|_{L_2(0,t;L_{6/5}(\Omega))}^2+ \|\omega_r(0)\|_{L_2(\Omega)}^2+\|\omega_z(0)\|_{L_2(\Omega)}^2+D_1^2,\\
D_8^2&\coloneqq D_7^2+(D_4+D_5)\|f_\varphi\|_{L_2(S_1^t)}^2,\\
D_9&\coloneqq\|f_\varphi\|_{L_s(0,t;L_{3s\over 2s+1}(\Omega))}^s+ \|v_\varphi(0)\|_{L_s(\Omega)}^s,\\
D_{10}^2 &\coloneqq D_2 \left| \frac{f_{\varphi}}{r} \right|_{\infty,\Omega^t}+\frac{D_1^2}{\nu} + |v_\varphi (0) |_{\infty,\Omega}^2,\\
D_{11} &\coloneqq \left( \frac{4-2\delta }{c_0^{s-4+2\delta }} \frac{D_2^{4-2\delta }}{4\nu} \frac{R^{2\delta }}{\delta^2} D_1   \right)^{\frac{1}{4-2\delta }},\\
D_{12}& \coloneqq \left( \frac{4-2\delta }{c_0^{s-4+2\delta }} | f_\varphi |_{10/(1+6\delta ), \Omega^t } D_1^{3-2\delta } + | v_\varphi (0) |_{s,\Omega }^{4-2\delta }   \right)^{\frac{1}{4-2\delta }}.
\end{split}
\]
Note that $D_{11}$ and $D_{12}$ involve also a free parameter $\delta>0$, and $c_0 >0$ from \eqref{4.29}. In particular,  $D_{11},D_{12}$ are finite on $(0,T)$ if and only if \eqref{4.29} holds on $(0,T)$, and then \eqref{X_bd} can be stated more precisely as
\[
X(t) \leq \phi ( D_1, \ldots , D_{12} ) \qquad \text{ for }t\in (0,T)
\]
for some function $\phi$.

\begin{theorem}\label{thm_X_is_boss}
Suppose that $v$ is a solution of the Navier-Stokes equations on $(0,T)$ such that $D_1,\ldots , D_{12} <\infty$ for some $\delta >0$, $d\geq 6$, $c_0>0$. Then 
\[
\| v\|_{W_2^{4,2}(\Omega^T)}+\| \nabla p \|_{W_2^{2,1}(\Omega^T)} \leq C(D_1,\ldots ,D_{12}).
\]
Moreover, for each $l\geq 0$, $p\in (1,\infty )$, if $f\in W_p^{l,l/2} (\Omega^T)$ and $v(0)\in W^{l+2-2/p}_p (\Omega )$, then also
\[
\| v\|_{W_p^{l+2,\frac{l+2}2}(\Omega^T)} \leq C\left( p,l,D_1,\ldots ,D_{12},\| f \|_{W_{p}^{l,l/2} (\Omega^T)} \right).
\]
\end{theorem}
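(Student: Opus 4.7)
The plan is a two-stage argument: first, use the finiteness of the data constants to invoke Theorem~\ref{t1.3} and obtain $X(t) \lec 1$ on $(0,T)$; second, bootstrap this $X$-control into standard Sobolev regularity through elliptic estimates for the stream function and linear parabolic estimates for the vorticity--swirl system. For the first stage, the paper already notes that $D_{11}, D_{12} < \infty$ is equivalent to the ratio condition \eqref{4.29} holding on $(0,T)$ with constant $c_0$; together with the swirl maximum principle $|u|_{\infty,\Omega^t} \leq D_2$ from Lemma~\ref{l5.1} and the energy-type data recorded in $D_1,\ldots,D_{10}$, Theorem~\ref{t1.3} applies and yields $X(t) \lec_{\delta,d,c_0} 1$.

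For the second stage, the $V$-bound on $\Gamma = \omega_\varphi/r$ together with $r \leq R$ places $\omega_\varphi \in L_\infty(0,t;L_2) \cap L_2(0,t;H^1)$; injecting this into \eqref{1.14} and applying the weighted $H^2$ and $H^3$ estimates on the modified stream function $\psi_1$ of Lemmas~\ref{l3.1}--\ref{l3.2} gives $\psi \in L^\infty_t H^2 \cap L^2_t H^3$, and hence via the representation \eqref{1.15}, $v_r, v_z \in L^\infty_t H^1 \cap L^2_t H^2$. A parallel energy analysis of the swirl equation \eqref{u_eq} (using Lemma~\ref{l5.1} and $D_2,D_4,D_5$) yields matching control on $v_\varphi$, so that $v \in L^2_t H^2$. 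At this point the vorticity system \eqref{1.9} and the swirl equation \eqref{u_eq} may be read as linear parabolic problems whose right-hand sides (being quadratic in $v$ and $\nabla v$) are now in $L_2(\Omega^T)$; Solonnikov's parabolic $L^2$ estimates upgrade $\omega$ and $u$ to $W_2^{2,1}$, a second application of the elliptic estimates on \eqref{1.14} lifts $v$ to $W_2^{2,1}$, and one further iteration of the parabolic estimates (with coefficients now in $L^\infty_t H^2$) delivers $v \in W_2^{4,2}(\Omega^T)$. The bound $\nabla p \in W_2^{2,1}(\Omega^T)$ follows by reading \eqref{nse} as a Stokes system with source $f-(v\cdot\nabla)v$ and applying Solonnikov's inhomogeneous Stokes estimate.

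The general $W_p^{l+2,(l+2)/2}$ statement is then proved by a routine induction on $l$: once the previous step is established, the nonlinearity $(v\cdot\nabla)v$ lies in $W_p^{l,l/2}$ by Sobolev embedding, and a linear parabolic $L^p$ estimate for the Stokes system closes the loop. The main obstacle is the transition performed in the first half of the second stage, where the weighted control of $\Phi = \omega_r/r$ and $\Gamma = \omega_\varphi/r$ (which carry a singular $1/r$ factor at the axis) must be converted into unweighted Sobolev bounds on $v$. This requires the Liu--Wang axis expansions \eqref{2.19}--\eqref{2.23} (which, as the paper notes, become available once $X$ is bounded), together with a careful propagation of the boundary conditions \eqref{bcs1}, \eqref{bcs_PhiGamma}, \eqref{bc_psi1,zz} through every auxiliary linear problem so that the weighted elliptic and Solonnikov-type parabolic estimates remain applicable up to the boundary $S$.
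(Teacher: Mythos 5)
Your two-stage architecture matches the paper's: Stage 1 (finiteness of $D_{11},D_{12}$ gives \eqref{4.29}, hence $X\lec 1$ by Theorem~\ref{t1.3}) is exactly how the paper sets things up, and the opening of your Stage 2 (elliptic estimates of Lemmas~\ref{l3.1}--\ref{l3.2} applied to \eqref{psi1_eq}, giving $\psi_1\in L^\infty_t H^2\cap L^2_t H^3$ and hence $v_r,v_z\in L^\infty_t H^1$ via \eqref{1.22}) coincides with the first steps of the paper's Lemma~\ref{l7.1}. From there the routes diverge: the paper never returns to the vorticity system. It rewrites \eqref{nse} as the Stokes problem \eqref{7.9} with right-hand side $-v'\cdot\nabla v+f$, where $v'=v_r\bar e_r+v_z\bar e_z\in L^\infty_t L^6$, and climbs a ladder of mixed-norm Maremonti--Solonnikov estimates (Lemma~\ref{l2.17}): $v'\cdot\nabla v\in L_{3/2,2}\Rightarrow v\in W^{2,1}_{3/2,2}\Rightarrow \nabla v\in L_{5/2}\Rightarrow v\in W^{2,1}_{30/17,5/2}\Rightarrow\cdots\Rightarrow v\in W^{2,1}_{5'}\Rightarrow v\in L_\infty,\ \nabla v\in L_q$ for all finite $q$, and only then reaches $W_{10/3}^{3,3/2}$ and finally $W_2^{4,2}$. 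Your proposal to bootstrap instead on the vorticity--swirl system is not unreasonable in principle, but it carries extra baggage (mixed boundary conditions \eqref{bcs1} for a coupled $(\omega_r,\omega_z)$ system with stretching terms) that the velocity/Stokes route avoids.

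The concrete gap is your claim that, once $v\in L^\infty_t H^1\cap L^2_t H^2$, the right-hand sides of \eqref{1.9} and \eqref{u_eq}, ``being quadratic in $v$ and $\nabla v$,'' lie in $L_2(\Omega^T)$. The vortex-stretching terms $\omega_r v_{r,r}$, $\omega_z v_{r,z}$, $\omega_r v_{z,r}$, $\omega_z v_{z,z}$ are schematically $(\nabla v)^2$, and from $\nabla v\in L^\infty_t L^2\cap L^2_t L^6$ one only gets $\nabla v\in L^{10/3}(\Omega^T)$, hence $(\nabla v)^2\in L^{5/3}(\Omega^T)$ --- strictly short of $L^2$. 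This is precisely why the paper cannot jump to $W_2^{2,1}$ in one step and instead passes through the intermediate exponents $L_{3/2,2}$, $L_{30/17,5/2}$, $L_{15/7,10/3}$, $L_{5'}$ before the nonlinearity becomes integrable enough for the top-order estimate. Your argument is repairable by inserting the same kind of intermediate ladder (or by switching to the paper's Stokes formulation, where the relevant product $v'\cdot\nabla v$ pairs the \emph{better} factor $v'\in L^\infty_t L^6$ against $\nabla v\in L^2_{t,x}$), but as written the single-step jump to $L_2$ data fails. A secondary loose end: your statement that elliptic estimates ``lift $v$ to $W_2^{2,1}$'' after $\omega\in W_2^{2,1}$ undercounts the gain (it should give more spatial regularity than that) and signals that the bookkeeping of which norm is obtained at each stage has not actually been carried out; in a bootstrap of this kind that bookkeeping is the entire content of the proof.
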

\begin{proof}
See Section~\ref{sec_reg}.
\end{proof}

We note that a result of Lei, Zhang \cite{LZ} (see also \cite{W}) shows that the axisymmetric Navier-Stokes equations are almost regular in the sense that a logarithmic modulus of continuity of $rv_{\varphi}$ at the axis is sufficient for regularity. The above proof of Theorem~\ref{T1} demonstrates a similar property from a different perspective.

\begin{remark}[Borderline lack of control of regularity of \eqref{nse}]\label{rem_crit}

We note that the above proof of Theorem~\ref{t1.3} demonstrates that the  axisymmetric Navier-Stokes equations are ``almost regular''.
In order to illustrate it, let us attempt to estimate $X(t)$ without making use of the assumption \eqref{4.29} of Theorem~\ref{t1.3}. In such case we can only use the order-reduction estimate \eqref{est2} on $|v_\varphi |_{\infty,\Omega^t}$. Then, instead of \eqref{I_control}, one can bound $I$ directly using the maximum principle~\eqref{2.9},
\[
I \leq D_2^{1-\varepsilon} |v_\varphi |^\varepsilon_{\infty, \Omega^t} \left| \frac{\Phi }{r^{1-\varepsilon_1}} \right|_{2,\Omega^t} \left| \frac{\Gamma }{r^{1-\varepsilon_2}} \right|_{2,\Omega^t} \lec \frac{D_2^{1-\varepsilon}}{\varepsilon_1 \varepsilon_2} |v_\varphi |_{\infty, \Omega^t}^\varepsilon X(t)^2 ,
\]
where $\varepsilon=\varepsilon_1+\varepsilon_2$ and we used the Hardy inequality \eqref{2.13} in the last inequality. Hence, applying this and \eqref{est2} in \eqref{005} gives
\[
X(t)^2 \lec_{\varepsilon_1,\varepsilon_2, \delta } 1+X(t)^{2+\frac{3(\varepsilon + \delta )}2}.
\]
We observe from this that, taking small $\varepsilon_1,\varepsilon_2, \delta >0$, we can obtain an ``almost linear'' estimate on $X$. 

Therefore, the above proof of Theorem~\ref{t1.3}, shows that one way of  obtaining a power on the right-hand side that is smaller than $2$ (and hence deducing regularity), is to use a more powerful order-reduction estimate \eqref{est3} (instead of~\eqref{est2}), in which case one needs to ``pay the price'' of assuming \eqref{4.29}.
\end{remark}

\begin{remark}[A comment about \eqref{4.29}]
Let us consider the expression in \eqref{4.29} for an arbitrary bounded function $f$,
\[
\frac{|f |_{d,\infty,\Omega^t}}{|f |_{\infty , \Omega^t}}.
\]
We first note that this expression is bounded above (for every $d\in [1,\infty )$) by $|\Omega |^{1/d}$, due to H\"older's inequality. Moreover, for every bounded function $f$ we have that
\eqnb\label{f_prop}
\lim_{d\to \infty } \frac{|f |_{d,\infty,\Omega^t}}{|f |_{\infty , \Omega^t}} =1.
\eqne
We emphasize that \eqref{f_prop} is a property of any $f\in L^\infty (\Omega^t)$, i.e. it has nothing to do with the Navier-Stokes equations.\\

\noindent Let us now consider \eqref{4.29} for $f\coloneqq v_\varphi$, where $v$ is a strong solution to the Navier-Stokes equations \eqref{nse_cylin} on a time interval $(0,T)$. Since $v_\varphi (t)\in L^\infty$ for every $t\in (0,T)$, the above observation implies that, for each $t\in (0,T)$, there exists $d(t)\in (6,\infty)$ such that 
\[
\frac{|v_\varphi |_{d,\infty,\Omega^t}}{|v_\varphi |_{\infty , \Omega^t}}  \geq c_d (t),
\]
for some $c_d (t)\in (0,1)$.\\

\noindent Suppose that $T\in (0,\infty )$ is the maximal time of existence of $v$. Then  $X(t)\to \infty $ as $ t\to T^-$ (see Section~\ref{sec_reg}). Consequently, Theorem~\ref{t1.3} implies that either
\begin{enumerate}
\item[(a)] $d(t)\to \infty$ as $t\to T^-$, or
\item[(b)] (if $d(t)$ remains bounded as $t\to T^-$, then) the corresponding lower bound $c_d(t)\to 0^+$ as $t\to T^-$.\vspace{0.5cm}\\
\end{enumerate} 
\end{remark}

The structure of the paper is as follows. After introducing some preliminary concepts in Section~\ref{s2} we discuss estimates on the modified stream function $\psi_1$  in Section~\ref{sec_psi1} and then discuss the main energy estimate \eqref{005} on $\Phi$, $\Gamma$, and \eqref{I_control}, in Section~\ref{sec_energy}. We then provide energy estimates for the swirl $u$ in Section~\ref{sec_swirl}, and show the order reduction estimates  (Theorem~\ref{T1})  in Section~\ref{sec_order_red}. In the last Section~\ref{sec_reg} we show that the regularity persists as long as $X(t)$ remains bounded, as mentioned below \eqref{2.23}.

\section{Preliminaries}\label{s2}

\subsection{Notation}
We will use the notation
\[
\intop_\Omega f  \coloneqq \intop_\Omega f \d x, \qquad \intop_{\Omega^t} f  \coloneqq \intop_{\Omega^t} f \d x\, \d t', \qquad 
\]
We often use cylindrical coordinates in integration, in which case we always write $\d r \d z$, in order to keep track of the Jacobian $r$.\\
We will use the following notation for Lebesque spaces
\eqnb\label{def_Lp}
\begin{split}
|u|_{p,\Omega}&\coloneqq \|u\|_{L_p(\Omega)},\hspace{2cm} |u|_{p,\Omega^t} \coloneqq \|u\|_{L_p(\Omega^t)},\\
|u|_{p,q,\Omega^t} &\coloneqq \|u\|_{L_q(0,t;L_p(\Omega))},
\end{split}
\eqne
where $p,q\in[1,\infty]$. We use standard definition of Sobolev spaces $W_p^{s} (\Omega )$, and we set $H^s (\Omega )\coloneqq W_2^{s} (\Omega )$, and 
\[
\begin{split}
\|u\|_{s,\Omega} &\coloneqq \|u\|_{H^s(\Omega)},\hspace{2.05cm} \|u\|_{s,p,\Omega}\coloneqq  \|u\|_{W_p^{s}(\Omega)}, \\
\|u\|_{k,p,q,\Omega^t}&\coloneqq \|u\|_{L_q(0,t;W_p^k(\Omega))},\hspace{1cm} \|u\|_{k,p,\Omega^t}\coloneqq \|u\|_{k,p,p,\Omega^t}.
\end{split}
\]
Finally, we  note that
\eqnb\label{axisym_grad}
|\nabla_{x_1,x_2} w|^2 = |w_{r,r}|^2 +|w_{\varphi ,r}|^2 +  |w_{z,r}|^2 +\frac{w_r^2}{r^2} +\frac{w_\varphi^2}{r^2} 
\eqne
for any axisymmetric vector field $w$
\subsection{Inequalities}

\begin{lemma}[Hardy inequality, see Lemma 2.16 in \cite{BIN}]\label{l2.6}
Let $p \in [1,\infty ]$, $\beta\not=1/p$, and let $F(x)\coloneqq \intop_0^xf(y)\d y$ for $\beta>1/p$ and $F(x)\coloneqq \intop_x^\infty f(y)\d y$ for $\beta<1/p$. Then
\begin{equation}
|x^{-\beta}F|_{p,\R_+}\le{1\over|\beta-{1\over p}|}|x^{-\beta+1}f|_{p,\R_+}.
\label{2.13}
\end{equation}
\end{lemma}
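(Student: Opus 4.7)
The plan is to reduce to the classical one-dimensional weighted Hardy inequality by an integration-by-parts-plus-H\"older argument, and handle the two cases ($\beta>1/p$, $\beta<1/p$) by symmetry, with the $p=\infty$ case being trivial. Since the statement is cited from \cite{BIN}, I will sketch how the proof goes rather than cite it blindly.

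Consider first the case $\beta>1/p$ with $F(x)=\intop_0^xf(y)\,\d y$, and assume $p\in [1,\infty)$; without loss of generality $f\geq 0$. I would begin with
\[
\intop_0^\infty x^{-\beta p}\,F(x)^p\,\d x,
\]
and integrate by parts using the antiderivative $\int x^{-\beta p}\,\d x=x^{1-\beta p}/(1-\beta p)$ (well-defined and negative since $\beta p>1$) against $F^p$. Provided the boundary terms $x^{1-\beta p}F(x)^p$ vanish at $0$ and $\infty$ — which holds under the assumption that the right-hand side of \eqref{2.13} is finite — one obtains
\[
\intop_0^\infty x^{-\beta p}F^p\,\d x
=\frac{p}{\beta p-1}\intop_0^\infty x^{1-\beta p}\,F^{p-1}\,f\,\d x.
\]
I would then rewrite the integrand on the right as $(x^{-\beta}F)^{p-1}\cdot(x^{1-\beta}f)$ and apply H\"older's inequality with exponents $p/(p-1)$ and $p$, yielding
\[
\intop_0^\infty x^{-\beta p}F^p\,\d x
\leq\frac{p}{\beta p-1}\Bigl(\intop_0^\infty x^{-\beta p}F^p\,\d x\Bigr)^{(p-1)/p}
\Bigl(\intop_0^\infty x^{(1-\beta)p}f^p\,\d x\Bigr)^{1/p}.
\]
Dividing through by the first factor on the right and recognizing $p/(\beta p-1)=1/(\beta-1/p)$ gives precisely \eqref{2.13}.

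For the case $\beta<1/p$ with $F(x)=\intop_x^\infty f(y)\,\d y$, essentially the same integration-by-parts identity applies: now $1-\beta p>0$, so $x^{1-\beta p}/(1-\beta p)$ is the natural antiderivative, and the derivative of $F$ carries the opposite sign, which flips the two sign changes and leaves the constant $1/(1/p-\beta)=1/|\beta-1/p|$. The boundary terms vanish for the symmetric reason. The case $p=\infty$ is immediate from the pointwise bounds $|F(x)|\leq \intop_0^x y^{\beta-1}\cdot y^{1-\beta}|f(y)|\,\d y\leq |y^{1-\beta}f|_{\infty}\cdot x^\beta/\beta$ (and analogously for the improper-at-infinity case), which give the constant $1/\beta=1/|\beta-1/\infty|$.

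The only real obstacle is justifying the vanishing of the boundary terms $x^{1-\beta p}F(x)^p$ at the endpoints; this is handled by a standard truncation/density argument, first proving the inequality for $f\in C_c^\infty(\R_+)$ (for which the boundary terms manifestly vanish, since $F$ is compactly supported away from the relevant endpoint in the weighted sense), and then extending to the general case by monotone convergence once the right-hand side of \eqref{2.13} is assumed finite. Because the proof is standard and the constant $1/|\beta-1/p|$ is sharp (attained in the limit by power functions $f(y)=y^{\beta-1-\varepsilon}\chi_{[1,\infty)}$), I would simply present the calculation above as the proof, adding the density remark in a single sentence.
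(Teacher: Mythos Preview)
Your argument is correct; it is the standard integration-by-parts-plus-H\"older proof of the one-dimensional Hardy inequality, with the endpoint cases $p=1$ and $p=\infty$ handled as degenerate instances (for $p=1$ the H\"older step collapses to an equality), and the density/truncation remark is the right way to dispose of the boundary terms. Note, however, that the paper does not actually supply a proof of this lemma: it merely cites Lemma~2.16 of \cite{BIN} and uses the inequality as a black box. So there is nothing to compare against --- your proposal simply furnishes the argument that the paper omits, and it does so correctly.
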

\begin{lemma}[Sobolev interpolation, see Sect. 15 in \cite{BIN}]\label{l2.8}
Let $\theta$ satisfy the equality
\begin{equation}
{n\over p}-r=(1-\theta){n\over p_1}+\theta\bigg({n\over p_2}-l\bigg)\quad {r\over l}\le\theta\le 1,
\label{2.16}
\end{equation}
where $1\le p_1\le\infty$, $1\le p_2\le\infty$, $0\le r<l$.\\
Then the interpolation holds
\begin{equation}
\sum_{|\alpha|=r}|D^\alpha f|_{p,\Omega}\le c|f|_{p_1,\Omega}^{1-\theta}\|f\|_{W_{p_2}^l(\Omega)}^\theta,
\label{2.17}
\end{equation}
where $\Omega\subset\R^n$ and $D^\alpha f=\partial_{x_1}^{\alpha_1}\dots\partial_{x_n}^{\alpha_n}f$, $|\alpha|=\alpha_1+\alpha_2+\cdots+\alpha_n$.
\end{lemma}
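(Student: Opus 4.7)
The plan is to reduce the statement on the bounded domain $\Omega$ to the analogous inequality on $\R^n$, and then to establish the whole-space version via a Littlewood--Paley frequency decomposition. For the reduction, I would invoke a Stein-type extension operator $E\colon W_{p_2}^l(\Omega)\to W_{p_2}^l(\R^n)$ which simultaneously extends $L_{p_1}(\Omega)\to L_{p_1}(\R^n)$ boundedly; such a universal extension exists because $\Omega$ is a finite cylinder and hence has Lipschitz boundary. Applying the $\R^n$ inequality to $Ef$ and restricting back to $\Omega$ yields \eqref{2.17} with an enlarged constant, so I can assume $\Omega=\R^n$ for the remainder.

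On $\R^n$ I would write $f=\sum_{j\in\mathbb{Z}}\Delta_j f$ using the dyadic frequency projections $\Delta_j$ onto $|\xi|\sim 2^j$ together with a low-frequency piece. For each multi-index $\alpha$ with $|\alpha|=r$, Bernstein's inequality gives
\[
|D^\alpha \Delta_j f|_{p,\R^n}\lec 2^{j(r+n/p-n/p_1)}|f|_{p_1,\R^n}
\quad\text{and}\quad
|D^\alpha \Delta_j f|_{p,\R^n}\lec 2^{j(r+n/p-n/p_2-l)}\|f\|_{W_{p_2}^l(\R^n)},
\]
where the second estimate uses $|\Delta_j f|_{p_2,\R^n}\lec 2^{-jl}\|f\|_{W_{p_2}^l(\R^n)}$ for $j\geq 0$. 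Choosing a cutoff $J$, I would bound the low frequencies ($j\leq J$) by the first estimate and the high frequencies ($j>J$) by the second; both geometric series converge precisely in the admissible regime dictated by \eqref{2.16}. Optimizing $J$ to equate the two resulting bounds produces the weights $|f|_{p_1,\R^n}^{1-\theta}$ and $\|f\|_{W_{p_2}^l(\R^n)}^\theta$, with the scaling identity \eqref{2.16} arising as exactly the condition that makes the two pieces dimensionally matched; the constraint $r/l\leq\theta\leq 1$ is the range where both series converge and the exponent on the left-hand side is nonnegative.

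The main technical difficulty lies in the endpoint cases $p_1\in\{1,\infty\}$ or $p_2\in\{1,\infty\}$, for which the usual $L^p$ Littlewood--Paley square-function characterization is not available. These can be treated by applying Bernstein's inequality at each dyadic scale individually (it remains valid for all $p\in[1,\infty]$) and summing geometrically without passing through square functions, or equivalently by invoking the classical real-interpolation theorem between $L_{p_1}$ and $W_{p_2}^l$ together with the Sobolev--Besov embedding $W_{p_2}^l\hookrightarrow B^l_{p_2,\infty}$. The extension step itself causes no further obstacle, since Stein's construction requires only Lipschitz regularity of $\partial\Omega$, which the cylinder (including the circular edges $S_1\cap S_2$) possesses.
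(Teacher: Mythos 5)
The paper does not actually prove this lemma; it is quoted verbatim from \cite{BIN}, Sect.~15, so there is no internal proof to compare your argument with. Judged on its own, your extension step is sound (Stein's operator is universal in $k$ and $p$, and the cylinder is Lipschitz), but the core dyadic argument has a genuine gap: Bernstein's inequality $|\Delta_j f|_{p,\R^n}\lesssim 2^{jn(1/q-1/p)}|\Delta_j f|_{q,\R^n}$ only moves \emph{up} in integrability, i.e.\ it requires $p\ge q$. Your low-frequency bound therefore needs $p\ge p_1$ and your high-frequency bound needs $p\ge p_2$, and neither is implied by \eqref{2.16} together with $r/l\le\theta\le 1$. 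Concretely, take $n=3$, $r=0$, $l=1$, $p_1=\infty$, $p_2=1$, $\theta=1/2$, which forces $p=3$: the inequality $|f|_{3,\R^3}\lesssim |f|_{\infty,\R^3}^{1/2}\|f\|_{W_1^1(\R^3)}^{1/2}$ is true (it follows from $\int|f|^3\le|f|_{\infty,\R^3}^{3/2}|f|_{3/2,\R^3}^{3/2}$ and $W_1^1\hookrightarrow L_{3/2}$), but both of your dyadic block estimates are false on $\R^3$, since a frequency-localized bump spread over a large region makes $|\Delta_jf|_{3}/|\Delta_jf|_{\infty}$ arbitrarily large. A non-endpoint instance of the same failure is $p_1=4$, $p_2=2$, $n=3$, $l=2$, $r=1$, $\theta=1/2$, which gives $p=8/3<p_1$. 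So the difficulty you flag (the exponents $1$ and $\infty$) is not the real obstruction; the obstruction is the direction of the Lebesgue exponents, and it occurs for reflexive $p_i$ as well. (There are also sign slips in your displayed exponents: the low-frequency gain should be $2^{j(r+n/p_1-n/p)}$; with your signs the series over $j\le J$ diverges.)

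Your fallback --- real interpolation of the couple $(L_{p_1},W_{p_2}^l)$ plus Besov--Sobolev embeddings --- is indeed the route that covers the full parameter range, but as written it is a one-line appeal to a different and itself nontrivial theory: interpolation between spaces with different integrability yields $B^{\theta l}_{\tilde p,q}$ only for specific third indices $q$, and the subsequent critical embedding into $W_p^r$ is exactly where the exceptional case lives (note that \eqref{2.17} with $\theta=1$, $p=\infty$, $l-r=n/p_2$, $1<p_2<\infty$ is the false endpoint Sobolev embedding, so the closed range $\theta\le1$ in \eqref{2.16} cannot be taken entirely literally). I would either restrict the claim to $p\ge\max(p_1,p_2)$ --- which covers every instance actually used in this paper, e.g.\ \eqref{psi1z_r_est} and the bound $|g|_{\infty,\Omega}\lesssim|g|_{2,\Omega}^{1/4}|D^2g|_{2,\Omega}^{3/4}+|g|_{2,\Omega}$ --- and run your Bernstein argument there, or else carry out the interpolation route in detail; as it stands the proposal does not prove the lemma in the generality stated.
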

\begin{lemma}[Hardy interpolation, see Lemma~2.4 in \cite{CFZ}]\label{l2.9}
Let $f\in C^\infty((0,R)\times(-a,a))$, $f|_{r\ge R}=0$. Let $1<p< 3$, $0\le s\le p$, $s< 2$, $q\in\big[p,{p(3-s)\over 3-p}\big]$. Then there exists a positive constant $c=c(p,s)$ such that
\begin{equation}
\left(\int_\Omega{|f|^q\over r^s}\right)^{1/q}\le c|f|_{p,\Omega}^{{3-s\over q}-{3\over p}+1} |\nabla f|_{p,\Omega}^{{3\over p}-{3-s\over q}},
\label{2.18}
\end{equation}
where $f$ does not depend on $\varphi$.
\end{lemma}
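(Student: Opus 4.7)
The plan is to prove \eqref{2.18} by interpolating between two endpoint values of $q$. At the left endpoint $q=p$ the desired inequality reduces to the weighted Hardy estimate
\[
\left(\int_\Omega \frac{|f|^p}{r^s}\right)^{1/p} \lec |f|_{p,\Omega}^{1-s/p}\, |\nabla f|_{p,\Omega}^{s/p},
\]
while at the right endpoint $q=q^*\coloneqq p(3-s)/(3-p)$ it reduces to the scale-invariant Caffarelli--Kohn--Nirenberg estimate
\[
\left(\int_\Omega \frac{|f|^{q^*}}{r^s}\right)^{1/q^*} \lec |\nabla f|_{p,\Omega}.
\]
The hypotheses $1<p<3$, $0\le s\le p$, $s<2$ are exactly those needed to guarantee both endpoint bounds (in particular, integrability of the weight $r^{-s}$ against the Jacobian $r$ near $r=0$, and finiteness of $q^*$).

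For the left endpoint, assume first $s\in(0,p)$. H\"older's inequality with conjugate exponents $p/(p-s)$ and $p/s$ gives
\[
\int_\Omega \frac{|f|^p}{r^s} = \int_\Omega |f|^{p-s}\cdot\frac{|f|^s}{r^s} \le |f|_{p,\Omega}^{p-s}\left(\int_\Omega \frac{|f|^p}{r^p}\right)^{s/p}.
\]
I would then invoke Hardy's inequality (Lemma~\ref{l2.6}) in the $r$-variable, for fixed $z$, exploiting $f|_{r\ge R}=0$ to write $f(r,z)=-\int_r^R f_{,r'}(r',z)\,\d r'$, and integrate in $z$ to obtain $\bigl(\int_\Omega |f|^p/r^p\bigr)^{1/p}\lec |\nabla f|_{p,\Omega}$. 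Combining these gives the endpoint. The boundary cases $s=0$ (trivial, as the weight disappears) and $s=p$ (direct application of the Hardy step) follow analogously.

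For the right endpoint, I would use that $f$ is axisymmetric to pass to cylindrical coordinates, yielding
\[
\int_\Omega \frac{|f|^{q^*}}{r^s}\,\d x = 2\pi\int_0^R\!\!\int_{-a}^a |f(r,z)|^{q^*} r^{1-s}\,\d z\,\d r,\qquad \int_\Omega |\nabla f|^p\,\d x = 2\pi\int_0^R\!\!\int_{-a}^a (f_{,r}^2+f_{,z}^2)^{p/2}\, r\,\d z\,\d r.
\]
This reduces the claim to a weighted 2D Sobolev embedding on the strip $(0,R)\times(-a,a)$ for a function vanishing at $r=R$, with weights $r^{1-s}$ on the left and $r$ on the right. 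A short scaling check shows that $q^*=p(3-s)/(3-p)$ is exactly the scale-invariant exponent tying these two weights, and the conditions $p<3$, $s<2$ ensure that the weighted Sobolev embedding holds, either via the general Caffarelli--Kohn--Nirenberg theory for Muckenhoupt weights, or more directly by re-interpreting the weighted 2D problem as a Sobolev embedding in effective higher-dimensional flat measures $\R^{3-s}$ and $\R^{3}$, respectively, obtained by absorbing the weights.

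Finally I would interpolate between the two endpoints via standard $L^q$-interpolation on the weighted measure space $(\Omega,r^{-s}\d x)$. Choosing $\lambda\in[0,1]$ so that $1/q=\lambda/p+(1-\lambda)/q^*$, H\"older on this measure space yields
\[
\left(\int_\Omega \frac{|f|^q}{r^s}\right)^{1/q} \le \left(\int_\Omega \frac{|f|^p}{r^s}\right)^{\lambda/p}\left(\int_\Omega \frac{|f|^{q^*}}{r^s}\right)^{(1-\lambda)/q^*}.
\]
Substituting the two endpoint estimates and simplifying using the algebraic identity $\lambda(1-s/p)=(3-s)/q-3/p+1$ (a direct computation from the definitions of $\lambda$ and $q^*$) produces exactly \eqref{2.18}. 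I expect the main obstacle to be the right endpoint, which is a genuinely non-trivial weighted Sobolev embedding that cannot be obtained by interpolation alone; the left endpoint and the final interpolation step are comparatively routine.
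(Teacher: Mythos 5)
The paper does not prove this lemma; it is quoted verbatim from \cite[Lemma~2.4]{CFZ}, so your argument can only be judged on its own merits. Your overall architecture (two endpoints $q=p$ and $q=q^*=p(3-s)/(3-p)$ plus Lyapunov interpolation on the measure $r^{-s}\,\d x$) is sound, and your exponent bookkeeping checks out: with $1/q=\lambda/p+(1-\lambda)/q^*$ one indeed gets $\lambda(1-s/p)=\frac{3-s}{q}-\frac3p+1$. The right endpoint, which you correctly flag as the hard part, is a true cylindrical Hardy--Sobolev inequality (Maz'ya, Badiale--Tarantello), though your sketch of it would need to be fleshed out.

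The genuine gap is in the left endpoint. Your H\"older split routes the estimate through $\bigl(\int_\Omega |f|^p/r^p\bigr)^{1/p}$, and the inequality $\bigl(\int_\Omega |f|^p/r^p\bigr)^{1/p}\lec|\nabla f|_{p,\Omega}$ is \emph{false} for $p\ge 2$: the axis $\{r=0\}$ has codimension $2$ in $\R^3$, the lemma does not assume $f$ vanishes there, and for $f\equiv 1$ near the axis one has $\int_\Omega |f|^p/r^p\,\d x=2\pi\int |f|^p r^{1-p}\,\d r\,\d z=\infty$ when $p\ge2$, while $|\nabla f|_{p,\Omega}<\infty$. Concretely, the branch of Lemma~\ref{l2.6} you invoke (with $F(r)=-\int_r^R f_{,\rho}\,\d\rho$) requires $\beta<1/p$, and matching $r^{-\beta p}=r^{1-p}$ forces $\beta=1-1/p$, so the hypothesis $\beta<1/p$ is exactly $p<2$; the other branch of the Hardy inequality would require $f$ to vanish at $r=0$ instead of at $r=R$. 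Since the Hölder step deposits the entire weight onto one factor, no choice of conjugate exponents avoids an intermediate quantity of the form $\int|f|^p/r^\sigma$ with $\sigma\ge2$, so this strategy cannot be repaired for $p\in[2,3)$. The left endpoint must instead exploit $s<2$ directly --- e.g.\ writing $\int|f|^p r^{1-s}\,\d r\,\d z=\frac1{2-s}\int|f|^p\partial_r(r^{2-s})\,\d r\,\d z$, integrating by parts (the boundary terms vanish since $f|_{r=R}=0$ and $s<2$), and then applying H\"older and Gagliardo--Nirenberg --- which is essentially how \cite{CFZ} proceed for the full range of $q$ at once.
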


\subsection{Basic estimates}
We first recall the energy for any regular solution $v$ to \eqref{nse}--\eqref{bcs}  
\eqnb\label{2.1}
|v(t)|_{2,\Omega}^2+\nu\intop_{\Omega^t}\left( |\nabla v_r|^2+|\nabla v_\varphi|^2+|\nabla v_z|^2 +{v_r^2\over r^2}+{v_\varphi^2\over r^2}\right)\lec  D_1^2,
\eqne
which follows by multiplying the Navier-Stokes equation \eqref{nse} by $v$, integrating by parts, and recalling \eqref{axisym_grad}.

As for the swirl $u=rv_\varphi$, we have the following.
\begin{lemma}[Maximum principle for the swirl]\label{l2.4}
For any regular solution $v$ to \eqref{nse}--\eqref{bcs}  we have 
\begin{equation}
|u(t)|_{\infty,\Omega}\le D_2.
\label{2.9}
\end{equation}
\end{lemma}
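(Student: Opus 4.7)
The plan is to derive the bound via an $L^p$ energy estimate on the swirl equation \eqref{u_eq}, and then pass to the limit $p\to\infty$. Multiplying \eqref{u_eq} by $|u|^{p-2}u$ and integrating over $\Omega$, I expect to obtain
\[
\frac{1}{p}\frac{d}{dt}|u|_{p,\Omega}^p + \nu(p-1)\int_\Omega |u|^{p-2}|\nabla u|^2 + \int_\Omega (v\cdot\nabla u)|u|^{p-2}u + \int_\Omega \frac{2\nu}{r}u_{,r}|u|^{p-2}u = \int_\Omega f_0\,|u|^{p-2}u.
\]
The goal is to show that the three integrals on the left are non-negative, so that H\"older on the right gives $\frac{d}{dt}|u|_{p,\Omega}\le |f_0|_{\infty,\Omega}$, which after integration in time and taking $p\to\infty$ yields \eqref{2.9}.

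First I would handle the convective term by rewriting $(v\cdot\nabla u)|u|^{p-2}u = \frac{1}{p}v\cdot\nabla|u|^p$ and integrating by parts. The bulk term vanishes by $\div v=0$, while the surface contributions vanish because $v\cdot n = v_r = 0$ on $S_1$ and $v\cdot n = \pm v_z = 0$ on $S_2$ by \eqref{bcs}. Next the viscous term: integrating by parts, the volume contribution $\nu(p-1)\int |u|^{p-2}|\nabla u|^2\ge 0$, while the surface contribution is $\nu\int_{\partial\Omega}|u|^{p-2}u\,\partial_n u$, which vanishes because $u=0$ on $S_1$ (from \eqref{bcs1}) and $\partial_n u=\pm u_{,z}=0$ on $S_2$ (also from \eqref{bcs1}).

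The main obstacle will be the Hardy-type singular term $\int_\Omega\frac{2\nu}{r}u_{,r}|u|^{p-2}u$. Using $dx=r\,dr\,d\varphi\,dz$ I would rewrite it as
\[
\frac{2\nu}{p}\int_\Omega \frac{(|u|^p)_{,r}}{r}\,dx = \frac{4\pi\nu}{p}\int_{-a}^{a}\bigl[|u|^p\bigr]_{r=0}^{r=R}\,dz,
\]
so the $1/r$ factor is absorbed by the Jacobian. The boundary value at $r=R$ vanishes because $u|_{S_1}=0$, and the boundary value at $r=0$ vanishes because, for a regular solution, the expansion \eqref{2.20} gives $u=rv_\varphi = b_1(z,t)r^2+\cdots = O(r^2)$ near the axis. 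This is the only place in the proof where axis regularity is used, and it is crucial: without it, the sign of this term would not be clear.

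Finally, with all three left-hand integrals non-negative (indeed, the first two non-negative and the third zero), I would apply H\"older on the right to get $\frac{1}{p}\frac{d}{dt}|u|_{p,\Omega}^p\le |f_0|_{\infty,\Omega}|u|_{p,\Omega}^{p-1}$, hence $\frac{d}{dt}|u|_{p,\Omega}\le |f_0|_{\infty,\Omega}$. Integrating in time gives $|u(t)|_{p,\Omega}\le |u(0)|_{p,\Omega}+\int_0^t|f_0|_{\infty,\Omega}\,ds$, and letting $p\to\infty$ on a bounded domain yields $|u(t)|_{\infty,\Omega}\le \|u(0)\|_{L_\infty(\Omega)}+\|f_0\|_{L_1(0,t;L_\infty(\Omega))}=D_2$, as claimed.
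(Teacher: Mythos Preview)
Your proof is correct and follows essentially the same approach as the paper: multiply the swirl equation by $u|u|^{p-2}$, observe that the singular term $\frac{2\nu}{r}u_{,r}|u|^{p-2}u$ integrates (after the Jacobian cancels the $1/r$) to a boundary term which vanishes because $u|_{r=0}=u|_{r=R}=0$, then pass to the limit $p\to\infty$. The only minor differences are that the paper applies H\"older with the $L_s$ norm of $f_0$ (giving $\frac{d}{dt}|u|_{s,\Omega}\le |f_0|_{s,\Omega}$) rather than the $L_\infty$ norm, and leaves the convective and viscous boundary terms implicit; your treatment of these is correct and more explicit.
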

\begin{proof}
Multiplying the swirl equation \eqref{u_eq}) by $u|u|^{s-2}$,  $s>2$, integrating over $\Omega$ and by parts, we obtain
\[
{1\over s}{\d\over \d t}|u|_{s,\Omega}^s+{4\nu(s-1)\over s^2}|\nabla|u|^{s/2}|_{2,\Omega}^2+{\nu\over s}\intop_\Omega(|u|^s)_{,r}\d r\d z=\intop_\Omega f_0u|u|^{s-2}.
\]
Noting that  $u|_{r=0}=u|_{r=R}=0$ (by \eqref{2.20} and \eqref{bcs}), we see that the last term on the left-hand side  vanishes, and so
\[
{\d\over \d t}|u|_{s,\Omega}\le|f_0|_{s,\Omega}.
\]
Integration in time and taking  $s\to\infty$ gives \eqref{2.9}.
\end{proof}

\begin{lemma}[Energy estimates for $\psi$ and $\psi_1$]\label{l2.7} For every regular solution $v$ to \eqref{nse}--\eqref{bcs}  
\begin{align}
\|\psi\|_{1,\Omega}^2+|\psi_1|_{2,\Omega}^2&\lec D_1^2,
\label{2.14}\\
\|\psi_{,z}\|_{1,2,\Omega^t}^2+|\psi_{1,z}|_{2,\Omega^t}^2&\lec D_1^2.
\label{2.15}
\end{align}
\end{lemma}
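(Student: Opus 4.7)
My plan is to test the stream-function equation~\eqref{1.14} by $\psi$ for~\eqref{2.14}, and for the space-time estimate~\eqref{2.15} to exploit the pointwise identities~\eqref{1.15}--\eqref{1.22} that express the relevant derivatives of $\psi$ and $\psi_1$ directly in terms of $v_r$.

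For~\eqref{2.14}, I would multiply $-\Delta\psi+\psi/r^2=\omega_\varphi$ by $\psi$, integrate over $\Omega$, and use $\psi|_S=0$ to obtain
\[
\intop_\Omega|\nabla\psi|^2+\intop_\Omega\frac{\psi^2}{r^2}=\intop_\Omega\omega_\varphi\,\psi.
\]
The second left-hand term coincides with $|\psi_1|_{2,\Omega}^2$, since in axisymmetric cylindrical coordinates both integrals reduce to $2\pi\intop\psi^2/r\,\d r\,\d z$. For the right-hand side I would substitute $\omega_\varphi=v_{r,z}-v_{z,r}$ and integrate by parts in $z$ and $r$ separately. The $z$-integration on $\intop v_{r,z}\psi$ is clean because $\psi$ vanishes on $S_2$, and the identity $v_r=-\psi_{,z}$ from~\eqref{1.15} converts this piece into $\intop_\Omega v_r^2$. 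For $\intop v_{z,r}\psi$ the Jacobian $r$ requires some care: the boundary term at $r=R$ vanishes since $\psi|_{S_1}=0$, and the boundary term at $r=0$ vanishes thanks to the expansion~\eqref{2.21} ($\psi=O(r)$ near the axis), so after collecting and recognising $v_z=\psi_{,r}+\psi/r$ this piece gives $-\intop_\Omega v_z^2$. Hence $\intop_\Omega\omega_\varphi\psi=\intop_\Omega(v_r^2+v_z^2)\lec D_1^2$ by~\eqref{2.1}, and~\eqref{2.14} follows after a final Poincar\'e step to control $|\psi|_{2,\Omega}$.

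For~\eqref{2.15} I would avoid differentiating~\eqref{1.14} in $z$, since the boundary condition for $\psi_{,z}$ on $S_2$ is not immediately useful. Instead I would read off from~\eqref{1.15} and~\eqref{1.22} the pointwise identities
\[
\psi_{,z}=-v_r,\qquad \psi_{,rz}=-v_{r,r},\qquad \psi_{,zz}=-v_{r,z},\qquad \psi_{1,z}=-v_r/r,
\]
so that
\[
|\psi_{1,z}|_{2,\Omega^t}^2=\intop_{\Omega^t}\frac{v_r^2}{r^2},\qquad |\nabla\psi_{,z}|_{2,\Omega^t}^2\le|v_{r,r}|_{2,\Omega^t}^2+|v_{r,z}|_{2,\Omega^t}^2.
\]
Both right-hand sides are controlled by $D_1^2/\nu$ directly from the energy identity~\eqref{2.1}. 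For the remaining term $|\psi_{,z}|_{2,\Omega^t}^2=|v_r|_{2,\Omega^t}^2$ I would use the standard Poincar\'e inequality in $r$ (valid since $v_r|_{S_1}=0$) to reduce it to $|\nabla v_r|_{2,\Omega^t}^2\lec D_1^2$.

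The only subtle point I anticipate is the vanishing of the boundary contribution at the axis $r=0$ during the radial integration by parts in the first step; this is where the smoothness expansions~\eqref{2.19}--\eqref{2.23} of regular axisymmetric solutions are silently used. Everything else is routine integration by parts combined with the already established energy identity~\eqref{2.1}.
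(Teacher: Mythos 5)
Your proposal is correct. For \eqref{2.14} you follow essentially the same route as the paper: test \eqref{1.14} with $\psi$, integrate by parts using $\psi|_S=0$, and control the right-hand side by the energy inequality \eqref{2.1}; the only difference is that you push the integration by parts all the way to the exact identity $\intop_\Omega\omega_\varphi\psi=\intop_\Omega(v_r^2+v_z^2)$, whereas the paper stops at $\intop_\Omega(v_z\psi_{,r}-v_r\psi_{,z})$ and closes with Young's inequality — a cosmetic difference. For \eqref{2.15} your route is genuinely different and more elementary: the paper differentiates $\eqref{1.14}_1$ in $z$, tests with $\psi_{,z}$, and integrates by parts using the boundary condition $\omega_\varphi|_S=0$ to move the $z$-derivative onto $\psi_{,zz}$ and absorb it; you instead bypass the elliptic equation entirely via the pointwise identities $\psi_{,z}=-v_r$, $\psi_{,rz}=-v_{r,r}$, $\psi_{,zz}=-v_{r,z}$, $\psi_{1,z}=-v_r/r$ from \eqref{1.15} and \eqref{1.22}, after which every term is read off directly from \eqref{2.1} (with $|\psi_{,z}|_{2,\Omega^t}=|v_r|_{2,\Omega^t}$ handled by Poincar\'e or by $|v_r|\le R\,|v_r/r|$). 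Your argument needs no boundary-term bookkeeping and no use of $\omega_\varphi|_S=0$; what the paper's testing argument buys is that it is the prototype for the higher-order weighted and elliptic estimates on $\psi_1$ in Lemmas~\ref{l2.18}, \ref{l3.1} and \ref{l3.2}, where no such pointwise shortcut is available. Both yield the stated bound with a constant depending on $\nu$ and $R$.
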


\begin{proof}
Multiplying $(\ref{1.14})_1$ by $\psi$, and integrating over $\Omega$ we obtain
\[\begin{split}
|\nabla\psi|_{2,\Omega}^2+|\psi_1|_{2,\Omega}^2&=\intop_\Omega\omega_\varphi\psi =\intop_\Omega(v_{r,z}-v_{z,r})\psi =\intop_\Omega(v_z\psi_{,r}-v_r\psi_{,z})\\
&\le (|\psi_{,r}|_{2,\Omega}^2+ |\psi_{,z}|_{2,\Omega}^2)/2 + c (|v_r|_{2,\Omega}^2+|v_z|_{2,\Omega}^2),\end{split}
\]
where we integrated by parts and used the boundary condition $\psi|_S=0$ (recall \eqref{1.14}) in the third equality. For \eqref{2.15} we differentiate $(\ref{1.14})_1$ with respect to $z$, multiply by $\psi_{,z}$ and integrate over $\Omega^t$ to obtain
\[\begin{split}
\intop_{\Omega^t}|\nabla\psi_{,z}|^2+\intop_{\Omega^t}|\psi_{1,z}|^2 = \intop_{\Omega^t}\omega_{\varphi,z}\psi_{,z}=-\intop_{\Omega^t}\omega_\varphi\psi_{,zz} \le|\psi_{,zz}|_{2,\Omega^t}^2/2+ c |\omega_\varphi|_{2,\Omega^t}^2,\\
\end{split}
\]
as required, where we used boundary condition $\omega_\varphi|_S=0$ (recall \eqref{bcs}) in the second equality. 
\end{proof}

\section{Estimates for the modified stream function $\psi_1$}\label{sec_psi1}

Here we introduce some estimates of $\psi_1$ in terms of $\Gamma$.

\subsection{Weighted Sobolev estimates for $\psi_1$}
\begin{lemma}[see Lemma 4.2 \cite{NZ}]\label{l2.18}
If $\psi_1$ is a sufficiently regular solution to \eqref{psi1_eq}, then 
\begin{equation}
\intop_\Omega(\psi_{1,zzz}^2+\psi_{1,rzz}^2)r^{2\mu}+2\mu(1-\mu)\intop_\Omega\psi_{1,zz}^2r^{2\mu-2}\le c\intop_\Omega\Gamma_{,z}^2r^{2\mu}.
\label{2.26}
\end{equation}
\end{lemma}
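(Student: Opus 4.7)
\smallskip

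The plan is to differentiate the elliptic equation \eqref{psi1_eq} once in $z$, multiply the resulting equation for $\psi_{1,z}$ by $\psi_{1,zzz}\,r^{2\mu+1}$, and integrate over the meridional strip $(0,R)\times(-a,a)$. The extra power of $r$ absorbs the cylindrical Jacobian, so that integrals of the form $\int(\cdot)\,r^{2\mu+1}\,dr\,dz$ correspond (up to the harmless factor $2\pi$) to $\int_\Omega(\cdot)\,r^{2\mu}$. In axisymmetric cylindrical variables \eqref{psi1_eq} reads $\psi_{1,rr}+(3/r)\psi_{1,r}+\psi_{1,zz}=-\Gamma$, and differentiating in $z$ gives
\[
\psi_{1,rrz}+\tfrac{3}{r}\psi_{1,rz}+\psi_{1,zzz}=-\Gamma_{,z}.
\]

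Multiplying by $\psi_{1,zzz}\,r^{2\mu+1}$ and integrating, I would integrate the leading term $\int\psi_{1,rrz}\psi_{1,zzz}\,r^{2\mu+1}$ by parts first in $z$ and then in $r$; the $z$-boundary contributions vanish by \eqref{bc_psi1,zz}, while the $r=R$ contribution vanishes because $\psi_1|_{S_1}=0$ forces $\psi_{1,zz}|_{r=R}=0$. The result is $\int\psi_{1,rzz}^2\,r^{2\mu+1}$ plus a cross term $(2\mu+1)\int\psi_{1,rzz}\psi_{1,zz}\,r^{2\mu}$. Integrating the lower-order term $3\int\psi_{1,rz}\psi_{1,zzz}\,r^{2\mu}$ by parts once in $z$ produces $-3\int\psi_{1,rzz}\psi_{1,zz}\,r^{2\mu}$, and the three cross contributions combine into $(2\mu-2)\int\psi_{1,rzz}\psi_{1,zz}\,r^{2\mu}=(\mu-1)\int(\psi_{1,zz}^2)_{,r}\,r^{2\mu}$. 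One further integration by parts in $r$ converts this exactly to $2\mu(1-\mu)\int\psi_{1,zz}^2\,r^{2\mu-1}$. Young's inequality applied to $-\int\Gamma_{,z}\psi_{1,zzz}\,r^{2\mu+1}$ absorbs half of $\int\psi_{1,zzz}^2\,r^{2\mu+1}$ into the left-hand side, which yields \eqref{2.26}.

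The main subtlety I foresee is the treatment of the weighted boundary terms at the symmetry axis $r=0$, where $\psi_{1,zz}$ need not vanish. To handle these I would invoke the Liu--Wang expansion \eqref{2.22}--\eqref{2.23}, which gives $\psi_{1,zz}(r,z,t)=d_{1,zz}(z,t)+O(r^2)$ and $\psi_{1,rzz}(r,z,t)=O(r)$ as $r\to 0$. With these asymptotics, the boundary term $[\psi_{1,rzz}\psi_{1,zz}\,r^{2\mu+1}]_{r=0}$ scales like $r^{2\mu+2}$ and vanishes for $\mu>-1$, while the boundary term $[\psi_{1,zz}^2\,r^{2\mu}]_{r=0}$ produced by the final integration by parts requires $\mu>0$. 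Thus the implicit range $\mu\in(0,1)$, which also keeps the coefficient $2\mu(1-\mu)$ positive so that the corresponding term is genuinely useful on the left-hand side, is the natural one for the estimate.
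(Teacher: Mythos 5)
Your proposal is correct and follows essentially the same route as the paper's proof: differentiate \eqref{psi1_eq} in $z$, test against $\psi_{1,zzz}$ with the weight $r^{2\mu}$ (your $r^{2\mu+1}\,\d r\,\d z$ is the same integral with the Jacobian written out), integrate by parts first in $z$ then in $r$ using \eqref{bc_psi1,zz} and $\psi_{1,zz}|_{r=R}=0$, combine the cross terms into $2(\mu-1)\int\psi_{1,rzz}\psi_{1,zz}r^{2\mu}\,\d r\,\d z=(\mu-1)\int\partial_r(\psi_{1,zz}^2)r^{2\mu}\,\d r\,\d z$, and integrate by parts once more to produce the $2\mu(1-\mu)$ term, finishing with Young's inequality. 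Your discussion of the axis boundary terms via the Liu--Wang expansions and the resulting restriction $\mu\in(0,1)$ matches the paper's (implicit) treatment exactly.
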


\begin{proof}
We differentiate (\ref{psi1_eq}) with respect to $z$, multiply by $-\psi_{1,zzz}r^{2\mu}$ and integrate over $\Omega$ to obtain
\begin{equation}\eqal{
&\intop_\Omega\psi_{1,rrz}\psi_{1,zzz}r^{2\mu}+\intop_\Omega\psi_{1,zzz}^2r^{2\mu}\cr
&\quad+3\intop_\Omega{1\over r}\psi_{1,rz}\psi_{1,zzz}r^{2\mu }=-\intop_\Omega\Gamma_{,z}\psi_{1,zzz}r^{2\mu}.\cr}
\label{2.27}
\end{equation}
In view of \eqref{bc_psi1,zz}, the first integral on the left-hand side of (\ref{2.27}) equals
\begin{equation}\eqal{
-&\intop_\Omega\psi_{1,rrzz}\psi_{1,zz}r^{2\mu}=-\intop_\Omega(\psi_{1,rzz}\psi_{1,zz}r^{2\mu+1})_{,r}\d r\d z\cr
&\quad+\intop_\Omega\psi_{1,rzz}^2r^{2\mu}+(2\mu+1)\intop_\Omega\psi_{1,rzz}\psi_{1,zz}r^{2\mu}\d r\d z.\cr}
\label{2.29}
\end{equation}
Since $\psi_1|_{r=R}=\psi_{1,r}|_{r=0}=0$ (by \eqref{psi1_eq} and \eqref{2.23}), the first term on the right-hand side of  \eqref{2.29} vanishes. Integrating by parts with respect to $z$ in the last term on the left-hand side of (\ref{2.27}) and using \eqref{bc_psi1,zz}, it takes the form
\begin{equation}
-3\intop_\Omega\psi_{1,rzz}\psi_{1,zz}r^{2\mu} \,\d r\d z.
\label{2.30}
\end{equation}
Using (\ref{2.29}) and (\ref{2.30}) in (\ref{2.27}) yields
\eqnb\label{2.31}
\intop_\Omega(\psi_{1,zzz}^2+\psi_{1,rzz}^2)r^{2\mu}+2(\mu-1)\intop_\Omega\psi_{1,rzz}\psi_{1,zz}r^{2\mu}\d r\d z=-\intop_\Omega\Gamma_{,z}\psi_{1,zzz}r^{2\mu}.
\eqne
The second term on the left-hand side of (\ref{2.31}) equals
\begin{equation}(\mu-1)\intop_\Omega\partial_r(\psi_{1,zz}^2)r^{2\mu}\d r\d z=(\mu-1)\intop_\Omega\partial_r(\psi_{1,zz}^2r^{2\mu})\d r\d z+2\mu(1-\mu)\intop_\Omega\psi_{1,zz}^2r^{2\mu-1}\d r\d z,
\label{2.32}
\end{equation}
where the first integral vanishes because $\psi_{1,zz}|_{r=R}=0$ (recall \eqref{psi1_eq}) and $\psi_{1,zz}^2r^{2\mu}|_{r=0}=0$ (recall \eqref{2.22}). Using (\ref{2.32}) in (\ref{2.31}) and applying the H\"older and Young inequalities to the r.h.s. of (\ref{2.31}), we obtain (\ref{2.26}), as required.
\end{proof}

\subsection{Elliptic estimates for the modified stream function $\psi_1$}\label{s3}

We recall that the modified stream function $\psi_1$ is a solution to the problem \eqref{psi1_eq},
\[
-\Delta\psi_1-{2\over r}\psi_{1,r}=\Gamma,\qquad \left. \psi_1 \right|_S=0.
\]
In this section we prove $H^2$ and $H^3$ elliptic estimates for $\psi_1$, in cylindrical coordinates.

\begin{lemma}[$H^2$ elliptic estimate on $\psi_1$, see Lemma 3.1 in \cite{Z1}]\label{l3.1}
If $\psi_1$ is a sufficiently regular solution to \eqref{psi1_eq} then 
\eqnb\label{psi1_h2}
\intop_\Omega \left( \psi_{1,rr}^2+\psi_{1,rz}^2+\psi_{1,zz}^2 + \frac{\psi_{1,r}^2}{r^2} \right) +\int_{-a}^a \left( \left.\psi_{1,z}^2\right|_{r=0} +  \left.\psi_{1,r}^2\right|_{r=R} \right) \d z \le c|\Gamma|_{2,\Omega}^2.
\eqne
\end{lemma}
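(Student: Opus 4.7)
The plan is to test the PDE against itself, \emph{i.e.}\ to square the equation
\[
-\psi_{1,rr}-\frac{3}{r}\psi_{1,r}-\psi_{1,zz}=\Gamma,
\]
multiply by $r$, and integrate over the meridional slice $\{0<r<R,-a<z<a\}$, so the right-hand side becomes a constant multiple of $|\Gamma|_{2,\Omega}^2$. Squaring the left-hand side produces the three diagonal terms $\psi_{1,rr}^2 r$, $\psi_{1,zz}^2 r$ and $9\psi_{1,r}^2/r$ exactly as required, plus three cross terms. The rest of the proof consists of integrating these cross terms by parts so that each contributes either the missing $\psi_{1,rz}^2r$ piece or non-negative boundary traces at $r=0$ and $r=R$.

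\textbf{Cross terms.} The cross term $6\psi_{1,rr}\psi_{1,r}$ is an exact $r$-derivative $3(\psi_{1,r}^2)_{,r}$, which integrates to the traces $3\int_{-a}^a\psi_{1,r}^2\bigl|_{r=R}\,\d z-3\int_{-a}^a\psi_{1,r}^2\bigl|_{r=0}\,\d z$; the axis contribution vanishes by the expansion~\eqref{2.23}. For $2r\psi_{1,rr}\psi_{1,zz}$ I integrate by parts first in $z$ (the $z=\pm a$ boundary disappears because $\psi_1|_{S_2}=0$ forces all $r$-derivatives of $\psi_1$ to vanish on $S_2$) and then in $r$ (the $r=R$ boundary disappears because $\psi_1|_{r=R}=0$ forces $\psi_{1,z}|_{r=R}=0$, while the $r=0$ boundary vanishes thanks to the explicit factor of $r$). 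This yields $+2\int\psi_{1,rz}^2 r\,\d r\,\d z-\int_{-a}^a\psi_{1,z}^2\bigl|_{r=0}\,\d z$. The third cross term $6\psi_{1,r}\psi_{1,zz}$ is handled by the same two integrations by parts, producing $+3\int_{-a}^a\psi_{1,z}^2\bigl|_{r=0}\,\d z$, which more than cancels the negative axis trace from the previous step.

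\textbf{Conclusion.} Adding everything, the identity takes the form
\[
\int\bigl(\psi_{1,rr}^2+2\psi_{1,rz}^2+\psi_{1,zz}^2\bigr)r\,\d r\,\d z+9\int\frac{\psi_{1,r}^2}{r}\,\d r\,\d z+3\int_{-a}^a\psi_{1,r}^2\bigl|_{r=R}\,\d z+2\int_{-a}^a\psi_{1,z}^2\bigl|_{r=0}\,\d z=\int\Gamma^2 r\,\d r\,\d z,
\]
every term on the left is non-negative, and the claimed estimate \eqref{psi1_h2} follows after rewriting the planar integrals weighted by $r$ as $(2\pi)^{-1}\int_\Omega\cdot\,\d x$ and absorbing the constant $2\pi$ into $c$.

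\textbf{Main obstacle.} The delicate point is justifying the axis integrations by parts: one must show that apparent boundary contributions such as $[\psi_{1,rz}\psi_{1,z}r]_{r=0}$ and $[\psi_{1,r}^2]_{r=0}$ actually vanish. The explicit factor $r$ disposes of the former, but $[\psi_{1,r}^2]_{r=0}=0$ genuinely requires the smooth expansions \eqref{2.22}--\eqref{2.23} for regular axisymmetric solutions, which give $\psi_{1,r}(0,z,t)=0$. This is exactly the place where the regularity hypothesis on $v$, and therefore on $\psi_1$, enters the estimate.
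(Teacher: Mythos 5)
Your computation is correct, and the route is genuinely different from the paper's. You test the equation $-\psi_{1,rr}-\tfrac{3}{r}\psi_{1,r}-\psi_{1,zz}=\Gamma$ against the full second-order operator itself (i.e.\ you square the equation and integrate against the measure $r\,\d r\,\d z$), which after the three integrations by parts yields the single exact identity
\[
\intop\bigl(\psi_{1,rr}^2+2\psi_{1,rz}^2+\psi_{1,zz}^2\bigr)r\,\d r\,\d z+9\intop\frac{\psi_{1,r}^2}{r}\,\d r\,\d z+3\intop_{-a}^a\psi_{1,r}^2\Bigl|_{r=R}\,\d z+2\intop_{-a}^a\psi_{1,z}^2\Bigl|_{r=0}\,\d z=\intop\Gamma^2 r\,\d r\,\d z,
\]
in which every term on the left already carries the right sign, so no absorption is needed. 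The paper instead performs two separate tests — first against $\psi_{1,zz}$ (giving control of $\psi_{1,rz}$, $\psi_{1,zz}$ and the axis trace of $\psi_{1,z}^2$ after Young's inequality), then against $\psi_{1,r}/r$ (giving $\psi_{1,r}^2/r^2$ and the trace at $r=R$) — and finally recovers $\psi_{1,rr}$ from the equation. The two arguments rely on exactly the same integration-by-parts identities and the same boundary facts ($\psi_1|_S=0$, $\psi_{1,r}|_{r=0}=0$ from \eqref{2.23}, $\psi_{1,z}|_{r=R}=0$), so neither is stronger; your version buys an exact identity with explicit constants ($9$, $3$, $2$) at the price of needing all squared second derivatives to be a priori integrable in order to justify the expansion of the square, whereas the paper's two-step version is more robust under weaker a priori regularity since each test involves only one second derivative at a time. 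Your identification of the axis terms as the only delicate point — in particular that $[\psi_{1,r}^2]_{r=0}=0$ requires the expansion \eqref{2.22}--\eqref{2.23} — matches exactly where the paper invokes the Liu--Wang expansions.
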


\begin{proof}
We multiply \eqref{psi1_eq} by $\psi_{1,zz}$ and integrate over $\Omega$ to obtain
\begin{equation}
-\intop_\Omega \left( \psi_{1,rr}\psi_{1,zz}+ \psi_{1,zz}^2 + 3 \frac{ \psi_{1,r}}r \psi_{1,zz}\right) =\intop_\Omega\Gamma\psi_{1,zz}.
\label{3.3}
\end{equation}
Integrating by parts with respect to $r$ in the first term gives
\[
\begin{split} 
-\intop_\Omega(\psi_{1,r}\psi_{1,zz}r)_{,r}\d r\d z+\int_\Omega\psi_{1,r}\psi_{1,zzr} &+ \int_\Omega\psi_{1,r}\psi_{1,zz}\d r\d z\\
&-\intop_\Omega  \psi_{1,zz}^2  - 3\intop_\Omega\psi_{1,r}\psi_{1,zz}\d r\d z=\intop_\Omega\Gamma \psi_{1,zz}.
\end{split}\]
Thus
\eqnb\label{3.4}
\begin{split}
-\int_{-a}^a \left[ \psi_{1,r}\psi_{1,zz}r\right]_{r=0}^{r=R}\d z+\intop_\Omega\psi_{1,r}\psi_{1,zzr}-\int_\Omega\psi_{1,zz}^2 -2\intop_\Omega\psi_{1,r}\psi_{1,zz}\d r\d z&= \intop_\Omega\Gamma\psi_{1,zz}.
\end{split}
\eqne
We note that the the first integral vanishes since $\psi_{1,r}|_{r=0}=0$ (recall expansion \eqref{2.23}) and $\psi_{1,zz}|_{r=R}=0$. We now integrate by parts with respect to $z$ in the second and the last terms on the left-hand side and use that $\psi_{1,r}|_{S_2}=0$ (since $\psi_1|_{S}=0$, recall \eqref{psi1_eq}), and we multiply by $-1$, to   obtain 
\eqnb\label{3.7}
\intop_\Omega(\psi_{1,zr}^2+\psi_{1,zz}^2) -2\intop_\Omega\psi_{1,rz}\psi_{1,z}\d r\d z =-\intop_\Omega\Gamma \psi_{1,zz}.
\eqne
We note that the last term on the left-hand side equals
\[
-\intop_\Omega (\psi_{1,z}^2)_{,r} \d r \d z = -\int_{-a}^a\left[ \psi_{1,z}^2 \right]_{r=0}^{r=R}\d z=\intop_{-a}^a\left. \psi_{1,z}^2\right|_{r=0}\d z,
\]
since $\psi_{1,z}|_{r=R}=0$. Applying this in \eqref{3.7}, and using the Young inequality to absorb $\psi_{1,zz}$ by the left-hand side, we obtain 
\begin{equation}
\intop_\Omega \left( \psi_{1,rz}^2+\psi_{1,zz}^2\right) +\intop_{-a}^a\left. \psi_{1,z}^2\right|_{r=0}\d z\le c|\Gamma|_{2,\Omega}^2.
\label{3.8}
\end{equation}

We now multiply $\eqref{psi1_eq}_1$ by $\psi_{1,r}/r$ and integrate over $\Omega$ to obtain
\eqnb\label{3.9} 3\intop_\Omega \frac{\psi_{1,r}^2}{r^2} = -\intop_\Omega\left( \psi_{1,rr}\frac{\psi_{1,r}}r  + \psi_{1,zz}\frac{\psi_{1,r}}r + \Gamma\frac{ \psi_{1,r}}r \right) .
\eqne
The first term on the right-hand side equals 
\[
-{1\over 2}\intop_\Omega\partial_r\psi_{1,r}^2\d r\d z=-{1\over 2}\intop_{-a}^a \left[ \psi_{1,r}^2\right]_{r=0}^{r=R}\d z=-{1\over 2}\intop_{-a}^a\left. \psi_{1,r}^2\right|_{r=R}\d z.
\]
where we used that $\psi_{1,r}|_{r=0}=0$ (recall expansion \eqref{2.23}) in the last equality. As for the other terms on the right-hand side of \eqref{3.9} we apply Young's inequality to absorb $\psi_{1,r}/r$ by the left-hand side. We obtain
\[\intop_\Omega\frac{\psi_{1,r}^2}{r^2} +{1\over 2}\intop_{-a}^a \left. \psi_{1,r}^2\right|_{r=R}\d z\lec |\psi_{1,zz}|_{2,\Omega}^2+|\Gamma |_{2,\Omega}^2.
\]
The claim \eqref{psi1_h2} follows from this, \eqref{3.8}, and from the equation $\eqref{psi1_eq}_1$ for $\psi_1$, which lets us estimate $\psi_{1,rr}$ in terms of $\psi_{1,zz}$, $\psi_{1,r}/r$.
\end{proof}

\begin{lemma}[$H^3$ elliptic estimates on $\psi_1$]\label{l3.2}
If $\psi_1$ is a sufficiently regular solution to \eqref{psi1_eq} then 
\begin{equation}
\intop_\Omega(\psi_{1,zzr}^2+\psi_{1,zzz}^2)+\intop_{-a}^a\psi_{1,zz}^2\bigg|_{r=0}\d z\le c|\Gamma_{,z}|_{2,\Omega}^2
\label{3.13}
\end{equation}
and
\begin{equation}\eqal{
&\intop_\Omega(\psi_{1,rrz}^2+\psi_{1,rzz}^2+\psi_{1,zzz}^2)+ \intop_{-a}^a\psi_{1,zz}^2\bigg|_{r=0}\d z+\intop_{-a}^a \psi_{1,rz}^2\bigg|_{r=R}\d z\cr
&\le c|\Gamma_{,z}|_{2,\Omega}^2.\cr}
\label{3.14}
\end{equation}
as well as
\begin{equation}
\bigg|{1\over r}\psi_{1,rz}\bigg|_{2,\Omega}\le c|\Gamma_{,z}|_{2,\Omega}
\label{3.24}
\end{equation}
\end{lemma}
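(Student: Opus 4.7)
These three estimates are the one-derivative-in-$z$ analogs of Lemma~\ref{l3.1}, and my strategy is to differentiate \eqref{psi1_eq} in $z$ and repeat, verbatim, the two testings that yielded Lemma~\ref{l3.1}, with $\psi_1$ replaced by $\psi_{1,z}$. Before testing, I record the boundary conditions satisfied by $\psi_{1,z}$: since $\psi_1|_{S_1}=0$ independently of $z$, tangential differentiation gives $\psi_{1,z}|_{r=R}=\psi_{1,zz}|_{r=R}=\psi_{1,zzz}|_{r=R}=0$; from \eqref{bc_psi1,zz}, $\psi_{1,zz}|_{S_2}=0$; and from the Liu--Wang expansions \eqref{2.22}--\eqref{2.23}, $\psi_{1,rz}|_{r=0}=0$.

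\textbf{Step 1 (test by $\psi_{1,zzz}$ to get \eqref{3.13}).} Differentiating \eqref{psi1_eq} in $z$ yields
\[
-\psi_{1,rrz}-\tfrac{3}{r}\psi_{1,rz}-\psi_{1,zzz}=\Gamma_{,z}.
\]
Multiply by $\psi_{1,zzz}$ and integrate over $\Omega$, mimicking the computation \eqref{3.3}--\eqref{3.8}. The integration by parts in $r$ on the $\psi_{1,rrz}\psi_{1,zzz}$ term produces a boundary contribution of the form $[\psi_{1,rz}\psi_{1,zzz}\,r]_{r=0}^{r=R}$ that vanishes (at $r=0$ by \eqref{2.23}; at $r=R$ by $\psi_{1,zzz}|_{r=R}=0$). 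A subsequent integration by parts in $z$ on $\int_\Omega\psi_{1,rz}\psi_{1,zzz}$ uses $\psi_{1,zz}|_{S_2}=0$ to kill the $z$-boundary contribution. The analog of \eqref{3.7} then reads
\[
\intop_\Omega(\psi_{1,rzz}^2+\psi_{1,zzz}^2)-2\intop_\Omega\psi_{1,rzz}\psi_{1,zz}\,\d r\,\d z=-\intop_\Omega\Gamma_{,z}\psi_{1,zzz},
\]
and the middle term equals $\int_{-a}^a\psi_{1,zz}^2|_{r=0}\,\d z$ upon rewriting it as $-\int_\Omega\partial_r(\psi_{1,zz}^2)\,\d r\,\d z$ and using $\psi_{1,zz}|_{r=R}=0$. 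Young's inequality on the right-hand side absorbs $\psi_{1,zzz}$ and yields \eqref{3.13}.

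\textbf{Step 2 (test by $\psi_{1,rz}/r$ to get \eqref{3.24} and the $r=R$ trace of \eqref{3.14}).} Multiply the $z$-differentiated equation by $\psi_{1,rz}/r$ and integrate over $\Omega$, paralleling \eqref{3.9}. The term $-\int_\Omega\psi_{1,rrz}\psi_{1,rz}/r$ equals $-\tfrac12\int_{-a}^a[\psi_{1,rz}^2]_{r=0}^{r=R}\,\d z=-\tfrac12\int_{-a}^a\psi_{1,rz}^2|_{r=R}\,\d z$ thanks to $\psi_{1,rz}|_{r=0}=0$ (from \eqref{2.23}). Applying Young's inequality to the remaining $\psi_{1,zzz}\psi_{1,rz}/r$ and $\Gamma_{,z}\psi_{1,rz}/r$ terms and invoking \eqref{3.13} gives
\[
\intop_\Omega\frac{\psi_{1,rz}^2}{r^2}+\tfrac12\intop_{-a}^a\psi_{1,rz}^2\Big|_{r=R}\d z\lec|\psi_{1,zzz}|_{2,\Omega}^2+|\Gamma_{,z}|_{2,\Omega}^2\lec|\Gamma_{,z}|_{2,\Omega}^2,
\]
which yields \eqref{3.24} and the $r=R$ trace contribution in \eqref{3.14}.

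\textbf{Step 3 (bound $\psi_{1,rrz}$ for \eqref{3.14}).} Rewrite the $z$-differentiated PDE as $\psi_{1,rrz}=-\tfrac{3}{r}\psi_{1,rz}-\psi_{1,zzz}-\Gamma_{,z}$, square and integrate, and bound each term using \eqref{3.13} and \eqref{3.24}. Combined with the outputs of Steps 1 and 2, this completes \eqref{3.14}.

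\textbf{Main obstacle.} The only real subtlety is the careful bookkeeping of boundary contributions at $r=0$ and $r=R$: one must check case by case that each boundary integral arising from integration by parts either vanishes (via the expansions \eqref{2.22}--\eqref{2.23}, via $\psi_1|_{S_1}=0$ and its $z$-derivatives, or via \eqref{bc_psi1,zz}) or else reappears on the left-hand side with the correct sign as one of the trace terms $\int_{-a}^a\psi_{1,zz}^2|_{r=0}\,\d z$ or $\int_{-a}^a\psi_{1,rz}^2|_{r=R}\,\d z$ claimed in \eqref{3.13}--\eqref{3.14}.
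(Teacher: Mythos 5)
Your proof is correct. Step 1 reproduces the paper's derivation of \eqref{3.13}: the paper integrates by parts first in $z$ and then in $r$ on $\intop_\Omega\psi_{1,rrz}\psi_{1,zzz}$, while you do it in the opposite order, but the boundary contributions you must discard --- at $r=R$ via $\psi_{1,zzz}|_{r=R}=0$ (from $\psi_1|_{S_1}=0$), at $r=0$ via \eqref{2.23}, and on $S_2$ via \eqref{bc_psi1,zz} --- all legitimately vanish, and you land on the same identity with the same $r=0$ trace term. For \eqref{3.14} and \eqref{3.24} your route is genuinely, if mildly, different from the paper's: the paper tests the $z$-differentiated equation with $\psi_{1,rrz}$, which produces $\intop_\Omega(\psi_{1,rrz}^2+\psi_{1,rzz}^2)$ and both trace terms directly (the $r=0$ trace arising with an unfavorable coefficient $-\tfrac12$ that must be compensated by adding \eqref{3.13}), and then reads off \eqref{3.24} algebraically from the differentiated PDE; you instead test with $\psi_{1,rz}/r$ --- the exact analog of the second testing in the proof of Lemma~\ref{l3.1} --- which yields \eqref{3.24} and the $r=R$ trace directly with favorable signs, and then recover the $\psi_{1,rrz}$ bound from the PDE. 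The two orderings are logically equivalent and rest on the same boundary facts ($\psi_{1,rz}|_{r=0}=0$ from \eqref{2.23}, $\psi_{1,zz}|_{r=R}=\psi_{1,zzz}|_{r=R}=0$ from \eqref{psi1_eq}, $\psi_{1,zz}|_{S_2}=0$ from \eqref{bc_psi1,zz}); yours is marginally cleaner in that no trace term needs to be moved across the inequality, while the paper's treats \eqref{3.14} as the primary estimate and \eqref{3.24} as its corollary.
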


\begin{proof}
First we show (\ref{3.13}). We differentiate $\eqref{psi1_eq}_1$ with respect to $z$, multiply by $-\psi_{1,zzz}$ and integrate over $\Omega$ to obtain
\eqnb\label{3.15}
\begin{split}
\intop_\Omega\psi_{1,rrz}\psi_{1,zzz} +\intop_\Omega\psi_{1,zzz}^2&+3\intop_\Omega{1\over r}\psi_{1,rz}\psi_{1,zzz}=-\intop_\Omega\Gamma_{,z}\psi_{1,zzz}.
\end{split} 
\eqne
Integrating by parts with respect to $z$ in the first term yields
\eqnb \label{3.16}
\intop_\Omega\psi_{1,rrz}\psi_{1,zzz}= \intop_\Omega(\psi_{1,rrz}\psi_{1,zz})_{,z}- \intop_\Omega\psi_{1,rrzz}\psi_{1,zz},
\eqne
where the first term vanishes due to \eqref{bc_psi1,zz}. Integrating the last integral in  \eqref{3.16} by parts with respect to $r$  gives
$$
-\intop_\Omega(\psi_{1,rzz}\psi_{1,zz}r)_{,r}\d r\d z+\intop_\Omega\psi_{1,rzz}^2+ \intop_\Omega\psi_{1,rzz}\psi_{1,zz}\d r\d z,
$$
where the first integral vanishes, since $\psi_{1,rzz}|_{r=0}=\psi_{1,zz}|_{r=R}=0$ (recall \eqref{2.23} and \eqref{psi1_eq}).

Thus, (\ref{3.15}) becomes
\eqnb\label{3.17} 
\intop_\Omega(\psi_{1,rzz}^2+\psi_{1,zzz}^2)+\intop_\Omega\left( \psi_{1,rzz}\psi_{1,zz} + 3\psi_{1,rz}\psi_{1,zzz}\right) \d r\d z=-\intop_\Omega\Gamma_{,z}\psi_{1,zzz}.
\eqne
Integrating by parts with respect to $z$ in the last term on the left-hand side of \eqref{3.17} and using that $\psi_{1,zz}|_{S_2}=0$ (recall \eqref{bc_psi1,zz}) we get
\eqnb\label{3.18}
\intop_\Omega(\psi_{1,rzz}^2+\psi_{1,zzz}^2)-\intop_\Omega\partial_r\psi_{1,zz}^2\d r\d z= -\intop_\Omega\Gamma_{,z}\psi_{1,zzz}.
\eqne
Recalling \eqref{psi1_eq} that  $\psi_{1,zz}|_{r=R}=0$, and using Young's inequality to absorb $\psi_{1,zzz}$ we obtain 
\[
\intop_\Omega \left( \psi_{1,rzz}^2+\psi_{1,zzz}^2\right) +\intop_{-a}^a\left. \psi_{1,zz}^2\right|_{r=0}\d z\lec |\Gamma_{,z}|_{2,\Omega}^2.
\]
which gives \eqref{3.13}.

As for \eqref{3.14}, we differentiate $(\eqref{psi1_eq})_1$ with respect to $z$, multiply by $\psi_{1,rrz}$ and integrate over $\Omega$ to obtain
\eqnb\label{3.19}
-\intop_\Omega \left( \psi_{1,rrz}^2+ \psi_{1,zzz}\psi_{1,rrz}+3{1\over r}\psi_{1,rz}\psi_{1,rrz}\right) =\intop_\Omega\Gamma_{,z}\psi_{1,rrz}.
\eqne
We integrate the second term on the left-hand side by parts in $z$, and recall \eqref{bc_psi1,zz} that  $\psi_{1,zz}|_{S_2}=0$, to get
\[
\begin{split}
-\intop_\Omega\psi_{1,zzz}\psi_{1,rrz}&=\intop_\Omega\psi_{1,zz}\psi_{1,rrzz}\\
&=\intop_\Omega(\psi_{1,zz}\psi_{1,rzz}r)_{,r}\d r\d z-\intop_\Omega\psi_{1,rzz}^2- \intop_\Omega\psi_{1,zz}\psi_{1,rzz}\d r\d z.
\end{split}
\]
We note that the first term on the right-hand side vanishes since $\psi_{1,rzz}|_{r=0}=0$ (recall \eqref{2.23}) and $\psi_{1,zz}|_{r=R}=0$ (recall \eqref{psi1_eq}), and so \eqref{3.19} becomes 
\eqnb\label{3.21}
\intop_\Omega \left( \psi_{1,rrz}^2+\psi_{1,rzz}^2 \right)+\intop_\Omega \left( \psi_{1,zz}\psi_{1,rzz}+3 \psi_{1,rz}\psi_{1,rrz} \right) \d r\d z =-\intop_\Omega\Gamma_{,z}\psi_{1,rrz}.
\eqne
Since the second term above equals
\[
{1\over 2}\intop_{-a}^a\left[ \psi_{1,zz}^2\right]_{r=0}^{r=R}\d z=-{1\over 2}\intop_{-a}^a\left. \psi_{1,zz}^2\right|_{r=0}\d z
\]
(as $\psi_{1,zz}|_{r=R}=0$, recall \eqref{psi1_eq}), and the last term on the left-hand side of \eqref{3.21} equals 
\[
\frac32 \intop_\Omega\partial_r\psi_{1,rz}^2\d r\d z=\frac32 \intop_{-a}^a\left[ \psi_{1,rz}^2\right]_{r=0}^{r=R}\d z=\frac32 \intop_{-a}^a\left. \psi_{1,rz}^2\right|_{r=R}\d z
\]
(as $\psi_{1,rz}|_{r=0}=0$, recall \eqref{2.23}), \eqref{3.21} becomes 
\eqnb\label{3.22}
\intop_\Omega(\psi_{1,rrz}^2+\psi_{1,rzz}^2)+ \intop_{-a}^a \left( -\frac12 \left. \psi_{1,zz}^2\right|_{r=0} + \frac32 \left. \psi_{1,rz}^2\right|_{r=R} \right) \d z=-\intop_\Omega\Gamma_{,z}\psi_{1,rrz}.
\eqne
We now use Young's inequality to absorb $\psi_{1,rrz}$ by the left-hand side to obtain \eqref{3.14}, which in turn implies \eqref{3.24} by differentiating $\eqref{psi1_eq}_1$ in $z$. 
\end{proof}

\section{The energy estimates for $\Phi$ and $\Gamma$}\label{sec_energy}

Here we show the energy estimate \eqref{005} for $\Gamma$ and $\Phi$, and we also verify the claimed control \eqref{I_control} of $I= |\int_{\Omega^t } {v_\varphi } \Phi \Gamma/r| $.

\begin{lemma}[Energy estimate for $\Phi$, $\Gamma$]\label{l4.1}
If $v$ is a regular solution of \eqref{nse}--\eqref{bcs}  on $(0,T)$ then, for every $t\in (0,T)$,
\eqnb\label{4.1}
D_2^2 \| \Gamma \|^2_{V(\Omega^t)} +\| \Phi \|^2_{V(\Omega^t)}  \lec D_2^2 \left(1+\frac{|v_\varphi|_{\infty,\Omega^t }^{\delta }R^{\delta }}{\delta   D_2^{\delta }} \right) \left(I + D_3^2 \right),
\eqne
and moreover, if $v_\varphi\in L^\infty (0,t;L^d (\Omega))$ for some $d>3$ and $\varepsilon_1, \varepsilon_2>0$ are sufficiently small such that
\[
\theta\coloneqq \left(1-{3\over d}\right)\varepsilon_1-{3\over d}\varepsilon_2\in (0,1), \quad 1+\frac{\varepsilon_2}{\varepsilon_1} < \frac{d}3,
\]
then
\eqnb\label{4.12}
I \lec D_2^{1-\varepsilon}|v_\varphi|_{d,\infty,\Omega^t}^\varepsilon{R^{\varepsilon_2}\over\varepsilon_2} |\Phi|_{2,\Omega^t}^\theta\| \Phi \|_{V(\Omega^t )}^{1-\theta}\| \Gamma \|_{V(\Omega^t )},
\eqne
where $\varepsilon\coloneqq \varepsilon_1+\varepsilon_2$.  
\end{lemma}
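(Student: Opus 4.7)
My plan for Lemma~\ref{l4.1} is a standard energy argument on the coupled system~\eqref{1.17}--\eqref{1.18}, combined with a careful Hölder factorisation for the cross-term $I$. For~\eqref{4.1}, I would test~\eqref{1.17} against $\Phi$ and~\eqref{1.18} against $D_2^2 \Gamma$, integrate over $\Omega^t$, and use $\div v = 0$ together with $\Phi|_S = \Gamma|_S = 0$ to eliminate the transport terms. Integration by parts on the viscous operator $\nu(\Delta + (2/r)\partial_r)$ yields the full $V$-norm on the left-hand side, with boundary contributions that vanish by~\eqref{bcs_PhiGamma}; the factor $D_2^2$ on the $\Gamma$-equation is chosen precisely to balance the coupling $2\int(v_\varphi/r)\Phi\Gamma = \pm I$ appearing in~\eqref{1.18}. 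The forcing terms $\bar F_r,\bar F_\varphi$ are absorbed into $D_2^2 D_3^2$ via Sobolev duality $L^{6/5}\hookrightarrow H^{-1}$, $L^3\hookrightarrow H^{-1/2}$ and Young's inequality.

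The only remaining nonlinear contribution is the vortex-stretching integral $J=\int_{\Omega^t}(\omega_r\partial_r+\omega_z\partial_z)(v_r/r)\,\Phi$ from~\eqref{1.17}. Using~\eqref{1.22} I would rewrite $v_r/r = -\psi_{1,z}$, and~\eqref{1.13} to substitute $\omega_r = -u_{,z}/r$, $\omega_z = u_{,r}/r$. The $u$-factors are then controlled by the swirl maximum principle $|u|\leq D_2$, interpolated with a small power $|v_\varphi|_\infty^\delta$ (through $u = rv_\varphi$ together with Hardy in $r\in[0,R]$); this is what produces the prefactor $1+|v_\varphi|_\infty^\delta R^\delta/(\delta D_2^\delta)$ in~\eqref{4.1}. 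The $\psi_1$-derivatives are controlled by the $H^2$ and $H^3$ elliptic estimates of Lemmas~\ref{l3.1}--\ref{l3.2} and by the weighted Lemma~\ref{l2.18}, in each case in terms of $\|\Gamma\|_V$; Young's inequality then absorbs the $V$-norm pieces into the left-hand side, giving~\eqref{4.1}.

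For~\eqref{4.12}, the idea is to exploit $|u|=|rv_\varphi|\leq D_2$ to trade $|v_\varphi|^{1-\varepsilon}$ for $D_2^{1-\varepsilon}r^{\varepsilon-1}$, and then symmetrically split the weight:
\[
I\leq D_2^{1-\varepsilon}\int_{\Omega^t}\frac{|v_\varphi|^{\varepsilon_1}}{r^{1-\varepsilon_1}}|\Phi|\;\cdot\;\frac{|v_\varphi|^{\varepsilon_2}}{r^{1-\varepsilon_2}}|\Gamma|,
\]
with $\varepsilon=\varepsilon_1+\varepsilon_2$. I would then apply Hölder in $\Omega$ with three factors, $|v_\varphi|^\varepsilon\in L^{d/\varepsilon}_x$, $\Phi/r^{1-\varepsilon_1}\in L^q_x$ with $q=2d/(d-2\varepsilon)$, and $\Gamma/r^{1-\varepsilon_2}\in L^2_x$. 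The $\Gamma$-factor is controlled by the 1D Hardy inequality Lemma~\ref{l2.6} with $p=2$, $\beta=1/2-\varepsilon_2$, using $\Gamma|_{r=R}=0$; this is exactly where the prefactor $R^{\varepsilon_2}/\varepsilon_2$ arises. The $\Phi$-factor is controlled by Hardy--interpolation Lemma~\ref{l2.9} with $p=2$ and $s=q(1-\varepsilon_1)$: the admissibility condition $q\leq p(3-s)/(3-p)=2(3-s)$ collapses, after direct algebra, to exactly the hypothesis $1+\varepsilon_2/\varepsilon_1\leq d/3$, and the resulting exponent $(3-s)/q-1/2$ on $\|\Phi\|_{L^2_x}$ simplifies to exactly $\theta=(1-3/d)\varepsilon_1-(3/d)\varepsilon_2$. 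A final time Hölder with exponents $(2/\theta,\,2/(1-\theta),\,2)$ converts $\int_0^t\|\Phi\|_2^\theta\|\nabla\Phi\|_2^{1-\theta}\|\nabla\Gamma\|_2\,\d s$ into $|\Phi|_{2,\Omega^t}^\theta\|\nabla\Phi\|_{2,\Omega^t}^{1-\theta}\|\nabla\Gamma\|_{2,\Omega^t}$, which is dominated by $|\Phi|_{2,\Omega^t}^\theta\|\Phi\|_V^{1-\theta}\|\Gamma\|_V$, giving~\eqref{4.12}.

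The main obstacle is the vortex-stretching term $J$ in~\eqref{4.1}. The singular $1/r$-weights coming from $\omega_r$, $\omega_z$ and from $v_r/r$ must be matched precisely against the weighted $\psi_1$ estimates of Lemmas~\ref{l2.18}, \ref{l3.1}, \ref{l3.2}, while the swirl bound $|u|\leq D_2$ must be interpolated so that the final coefficient on $\|\Phi\|_V^2+D_2^2\|\Gamma\|_V^2$ is of order $D_2^2$ with only a small correction $|v_\varphi|_\infty^\delta$, as required by~\eqref{4.1}. Balancing these weights correctly, so that no higher power of $D_2$ appears and no $V$-norm piece is left unabsorbed, is the technically demanding part.
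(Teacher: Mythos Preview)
Your proposal is correct and follows essentially the same route as the paper, including the exponent algebra for~\eqref{4.12}. Two small clarifications: first, after writing $\omega_r=-u_{,z}/r$, $\omega_z=u_{,r}/r$ in $J$, you must integrate by parts (in $z$ and $r$ respectively) to pass the derivatives from $u$ onto $\psi_{1,z}$ and $\Phi$---the key point is that the two resulting third-order terms $\pm\int u\,\psi_{1,zzr}\Phi\,\d r\,\d z$ cancel exactly, leaving the clean identity $J=-\int_\Omega v_\varphi(\psi_{1,zr}\Phi_{,z}-\psi_{1,zz}\Phi_{,r})$, to which the swirl bound and Lemmas~\ref{l2.18},~\ref{l3.2} then apply as you describe; second, the weight $D_2^2$ (in fact $cD_2^2(1+|v_\varphi|_\infty^{\delta}R^\delta/(\delta D_2^\delta))$) on the $\Gamma$-equation is not chosen to balance $I$---which simply remains on the right-hand side---but rather to absorb the term $\frac{D_2^2}{\nu}|\nabla\Gamma|_{2,\Omega}^2(1+\cdots)$ that appears on the right of the $\Phi$-inequality after applying Young to the bound on $J$.
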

(Recall \eqref{def_V} that $\|w\|_{V(\Omega^t)}\coloneqq |w|_{2,\infty,\Omega^t}+|\nabla w|_{2,\Omega^t}$.)
\begin{proof}
We multiply \eqref{1.17} by $\Phi$ and integrate over $\Omega$ to obtain
\eqnb\label{4.2}
{1\over 2}{\d\over \d t}|\Phi|_{2,\Omega}^2+\nu |\nabla\Phi|_{2,\Omega}^2-\nu \intop_{-a}^a\Phi^2\bigg|_{r=0}^{r=R}\d z =\intop_\Omega(\omega_r\partial_r+\omega_z\partial_z){v_r\over r}\Phi +\intop_\Omega\bar F_r\Phi,
\eqne
where the last term on the left-hand side equals $\intop_{-a}^a\Phi^2|_{r=0}\d z$, due to \eqref{bcs1}.  Recalling \eqref{1.13} that $\omega_r=-v_{\varphi ,z}$, $\omega_z=(rv_{\varphi })_{,r}/r$, we can integrate in the first term on the right-hand side by parts,  
\[
\begin{split}
\intop_\Omega(\omega_r\partial_r+\omega_z\partial_z){v_r\over r}\Phi &=\intop_\Omega\left( -v_{\varphi,z}\left( \frac{v_r}{r} \right)_{,r}+\frac{(rv_\varphi)_{,r}}{r^2}v_{r,z} \right) \Phi \, r\,\d r\,\d z\\
&=\intop_\Omega v_\varphi\left(\left(\frac{v_r}{r}\right)_{,rz}\Phi+\left( \frac{v_r}{r} \right)_{,r} \Phi_{,z} \right)-\intop_\Omega v_\varphi\bigg(\left(\frac{v_r}{r}\right)_{,rz}\Phi+\left( \frac{v_r}{r} \right)_{,z} \Phi_{,r}\bigg)\\
&=-\intop_\Omega v_\varphi\left( \psi_{1,zr} \Phi_{,z}-\psi_{1,zz} \Phi_{,r}\right)\\
&\leq \intop_{\Omega} \left| rv_{\varphi } \frac{\psi_{1,rz} }r \Phi_{,z} \right|  + \intop_{\Omega} \left| r^{1-\delta } v_{\varphi } \frac{\psi_{1,zz} }{r^{1-\delta }} \Phi_{,r} \right|  \\
&\leq |rv_\varphi|_{\infty,\Omega}\bigg|{\psi_{1,rz}\over r}\bigg|_{2,\Omega}|\Phi_{,z}|_{2,\Omega}+|r^{1-\delta } v_\varphi|_{\infty,\Omega}\left|\frac{\psi_{1,zz}}{r^{1-\delta }}\right|_{2,\Omega}|\Phi_{,r}|_{2,\Omega} \\
&\lec  D_2|\nabla \Phi |_{2,\Omega}\left( |\Gamma_{,z}|_{2,\Omega} +\frac{|v_\varphi|_{\infty,\Omega}^{\delta }}{\delta  D_2^{\delta }} |\psi_{1,zzr}r^{\delta }|_{2,\Omega} \right) \\
&\leq D_2|\nabla \Phi |_{2,\Omega}|\nabla \Gamma |_{2,\Omega} \left(1+\frac{|v_\varphi|_{\infty,\Omega}^{\delta }R^{\delta }}{\delta   D_2^{\delta }} \right),
\end{split}
\]
where, in the second line, the boundary term on $S_2$ vanishes because  $v_{\varphi,z}/r|_{S_2}=\Phi|_{S_2}=0$ (recall \eqref{bcs} and \eqref{bcs1}) and the second boundary term equals
\[
\intop_{-a}^a\left[ rv_\varphi\partial_z{v_r\over r}\Phi\right]_{r=0}^{r=R}\d z=0
\]
(since $v_\varphi|_{r=R}=0$ by \eqref{bcs}  $rv_\varphi\partial_z{v_r\over r}\Phi|_{r=0}=0$ by \eqref{2.19}--\eqref{2.20}). We also used the maximum principle \eqref{2.9}, \eqref{3.24} and the Hardy inequality \eqref{2.13} in the third inequality and \eqref{3.13} in the fourth inequality. Applying this in \eqref{4.2}  yields
\begin{equation}
\frac12 {\d\over \d t}|\Phi|_{2,\Omega}^2+\frac{\nu }2 |\nabla\Phi|_{2,\Omega}^2\lec  \frac{D_2^2}{2\nu }|\nabla \Gamma |_{2,\Omega}^2 \left(1+\frac{|v_\varphi|_{\infty,\Omega}^{2\delta }R^{2\delta }}{\delta ^2  D_2^{2\delta }} \right)+\frac{1}{\nu }|\bar F_r|_{6/5,\Omega}^2.
\label{4.6}
\end{equation}
Multiplying \eqref{1.18} by $\Gamma$, integrating over $\Omega$ and using the boundary conditions we have
\[ {1\over 2}{\d\over \d t}|\Gamma|_{2,\Omega}^2+\nu |\nabla\Gamma|_{2,\Omega}^2-\nu \intop_{-a}^a\Gamma^2\bigg|_{r=0}^{r=R}\d z=-2 \int_\Omega \frac{v_\varphi}r \Phi \Gamma +\intop_\Omega\bar F_\varphi\Gamma . \]
Applying the H\"older and Young inequality to the last term, and then multiplying the resulting inequality by $cD_2^2 (1+ |v_\varphi|_{\infty,\Omega}^{2\delta }R^{2\delta }/({\delta ^2  D_2^{2\delta }}))$ and adding  to \eqref{4.6} gives
\[\begin{split}
D_2^2{\d\over \d t}|\Gamma|_{2,\Omega}^2+\nu D_2^2|\nabla \Gamma |_{2,\Omega}^2&+{\d\over \d t}|\Phi|_{2,\Omega}^2+\nu |\nabla\Phi|_{2,\Omega}^2\\
&\lec D_2^2\left(1+\frac{|v_\varphi|_{\infty,\Omega}^{\delta }R^{\delta }}{\delta   D_2^{\delta }} \right) \left(\int_\Omega \frac{v_\varphi}r \Phi \Gamma + \frac{1}{2 \nu }\left( |\bar F_r|_{6/5,\Omega}^2 + |\bar F_\varphi |_{6/5,\Omega}^2 \right) \right). 
\end{split}\]
Integration in time gives \eqref{4.1}, as desired. \\

\noindent As for \eqref{4.12}, we note that
\eqnb\label{004}
\begin{split}
I&\le \intop_{\Omega^t}|rv_\varphi|^{1-\varepsilon}|v_\varphi|^\varepsilon\bigg|{\Phi\over r^{1-\varepsilon_1}}\bigg|\,\bigg|{\Gamma\over r^{1-\varepsilon_2}}\bigg| \\
&\le D_2^{1-\varepsilon}\left(\intop_{\Omega^t}|v_\varphi|^{2\varepsilon}\left|{\Phi\over r^{1-\varepsilon_1}}\right|^2 \right)^{1/2}\left|{\Gamma\over r^{1-\varepsilon_2}}\right|_{2,\Omega^t}\\
&\lec \frac{D_2^{1-\varepsilon}R^{\varepsilon_2}}{\varepsilon_2} | v_{\varphi } |^\varepsilon_{d,\infty,\Omega^t} \left|{\Phi\over r^{1-\varepsilon_1}}\right|_{\frac{2d}{d-2\varepsilon },2,\Omega^t} \left|\nabla \Gamma \right|_{2,\Omega^t}\\
&\lec \frac{D_2^{1-\varepsilon}R^{\varepsilon_2}}{\varepsilon_2} | v_{\varphi } |^\varepsilon_{d,\infty,\Omega^t} \left| \Phi \right|^\theta_{2,\Omega^t}\left| \nabla \Phi \right|^{1-\theta}_{2,\Omega^t}  \left|\nabla \Gamma \right|_{2,\Omega^t}, 
\end{split}
\eqne
as required, where we used the Hardy inequality \eqref{2.13} in the third inequality, and the Hardy interpolation (Lemma~\ref{l2.9}) in the last inequality, where we also recalled that $\theta \coloneqq \left(1-{3\over d}\right)\varepsilon_1-{3\over d}\varepsilon_2$. Note that $2d/(d-2\varepsilon ) \geq 2$ and (by assumption)
\[  \frac{2d}{d-2\varepsilon } \leq 2\left( 3 - \frac{2d(1-\varepsilon_1 )}{d-2\varepsilon } \right) \Leftrightarrow \frac{d}3\geq \frac{\varepsilon }{\varepsilon_1} ,\]
 as required by the assumption of the Hardy interpolation Lemma~\ref{l2.9}.
\end{proof}

\section{Estimates for swirl $u=rv_\varphi$}\label{sec_swirl}

Here we derive energy estimates for $\nabla u$. Recall that the swirl $u=rv_\varphi$ satisfies \eqref{u_eq}, namely
\[
u_{,t}+v\cdot\nabla u-\nu\Delta u+2\nu {u_{,r}\over r}= f_0,
\]
with boundary conditions $u=0$ on $S_1$ and $u_{,z}=0 $ on $S_2$. 

\begin{lemma}[see Lemma 5.1 in \cite{Z2}]\label{l5.1}
Any regular solution $u$ to \eqref{u_eq} satisfies 
\begin{equation}
\hspace{2cm}|u_{,z}(t)|_{2,\Omega}^2+\nu|\nabla u_{,z}|_{2,\Omega^t}^2\lec D_4^2,
\label{5.2}
\end{equation}
\eqnb\label{5.3}
|u_{,r}(t)|_{2,\Omega}^2+\nu\left( |u_{,rr}|_{2,\Omega^t}^2+|u_{,rz}|_{2,\Omega^t}^2\right) \lec D_5^2.
\eqne
\end{lemma}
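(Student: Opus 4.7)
The plan is to prove \eqref{5.2} by differentiating the swirl equation \eqref{u_eq} with respect to $z$ and testing the resulting equation against $u_{,z}$. The crucial preliminary observation is that $u_{,z}$ vanishes on all of $\partial\Omega$: on $S_2$ by the boundary assumption $u_{,z}|_{S_2}=0$, and on $S_1$ because $u|_{S_1}=0$ forces all tangential derivatives to vanish. This clean vanishing enables all subsequent integrations by parts to proceed without boundary contributions. After testing, the convective term $\int v\cdot\nabla u_{,z}\cdot u_{,z}$ vanishes by the divergence-free condition together with the boundary conditions on $v_r|_{S_1}$ and $v_z|_{S_2}$; the dissipative term yields $\nu|\nabla u_{,z}|_{2,\Omega}^2$; and the lower-order term $2\nu\int(u_{,rz}/r)u_{,z}\,\d x$ reduces to $\nu\int_{-a}^{a}[u_{,z}^2]_{r=0}^{r=R}\,\d z$, which vanishes by $u_{,z}|_{r=R}=0$ and by the axis expansion \eqref{2.20}, which gives $u_{,z}|_{r=0}=0$.

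The critical residual term is $-\int v_{,z}\cdot\nabla u\,u_{,z}$. Using the identity $v_{,z}\cdot\nabla u=(v\cdot\nabla u)_{,z}-v\cdot\nabla u_{,z}$ followed by IBP in $z$ (with the boundary term at $z=\pm a$ vanishing again by $u_{,z}|_{S_2}=0$), this transforms into $\int v\cdot\nabla u\,u_{,zz}$. From here one may either estimate directly by transferring a derivative off $v$ (via IBP in $r$, using $v_r|_{S_1}=0$ and $(rv_r)_{,r}=-r v_{z,z}$ from div-free) and invoking the maximum principle \eqref{2.9}, $|u|_{\infty}\le D_2$, to obtain a bound of the shape $D_2|\nabla v|_{2,\Omega}|\nabla u_{,z}|_{2,\Omega}$; Young's inequality absorbs the $\nabla u_{,z}$ factor and leaves a remainder $\sim D_2^2|\nabla v|_{2,\Omega}^2/\nu$. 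The source term $\int f_{0,z}u_{,z}$ is handled by IBP and Young's, contributing $|f_0|_{2,\Omega}^2/\nu$. Integrating in time and using the basic energy bound $\nu\|\nabla v\|_{2,\Omega^T}^2\lesssim D_1^2$ gives exactly the $D_1^2D_2^2/\nu$ and $|f_0|^2/\nu$ contributions appearing in $D_4^2$.

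The proof of \eqref{5.3} follows the same template, differentiating in $r$ in place of $z$ and testing against $u_{,r}$. The commutator of $\partial_r$ with the cylindrical Laplacian and with $2\nu u_{,r}/r$ produces an additional Hardy-type term $-\nu u_{,r}/r^2$ in the equation for $u_{,r}$, which supplies an extra nonnegative dissipation $\nu|u_{,r}/r|_{2,\Omega}^2$ rather than causing trouble. The bilinear term $\int v_{,r}\cdot\nabla u\,u_{,r}$ is again rewritten by IBP and bounded via the maximum principle $|u|_\infty\le D_2$, producing a contribution of order $D_2|\nabla v|_{2}|\nabla u_{,r}|_{2}$, and Young's inequality plus time integration produce the constant $D_5^2\sim D_1^2(1+D_2^2/\nu)$.

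The main obstacle will be managing the boundary terms in the estimate \eqref{5.3}. Unlike in the $z$-derivative case, $u_{,r}$ does not vanish on $\partial\Omega$: on $S_1$ only the tangential derivatives of $u$ vanish (not $u_{,r}$), and on $S_2$ no direct information on $u_{,r}$ is available. Consequently, IBP in $r$ produces surface integrals over $S_1$ whose control must rely on $u|_{S_1}=0$ and $v_r|_{S_1}=0$ (for convective boundary terms), while the behaviour at the axis $r=0$ is handled by the expansion \eqref{2.20}, which yields $u_{,r}|_{r=0}=0$. Careful bookkeeping of these boundary contributions, together with Hardy-type bounds to absorb weighted lower-order terms, is the delicate step that distinguishes \eqref{5.3} from \eqref{5.2}.
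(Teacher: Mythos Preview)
Your outline for \eqref{5.2} is correct and leads to the same place as the paper, though by a longer route. The paper handles the commutator term $\int_\Omega v_{,z}\cdot\nabla u\,u_{,z}$ in one step: integrate the $\nabla$ by parts (using $u_{,z}|_S=0$ and $\div v_{,z}=0$) to obtain $-\int_\Omega u\,v_{,z}\cdot\nabla u_{,z}$, then apply $|u|_\infty\le D_2$ directly. Your detour through $\int v\cdot\nabla u\,u_{,zz}$ works, but the phrase ``transferring a derivative off $v$'' is confusing (there is no derivative on $v$ at that stage), and carrying it out cleanly actually requires two further integrations by parts that land you back at the paper's expression anyway.

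For \eqref{5.3} there is a genuine sign error. The commutator of $\partial_r$ with $-\nu\Delta+2\nu r^{-1}\partial_r$ does produce $-\nu u_{,r}/r^2$ in the equation, as you write, but testing this against $u_{,r}$ gives $-\nu|u_{,r}/r|_{2,\Omega}^2$ on the \emph{left}: it is a bad term, not extra dissipation. The paper moves it to the right and controls it only after time integration, via the identity $u_{,r}/r=v_{\varphi,r}+v_\varphi/r$ and the basic energy bound \eqref{2.1} (see \eqref{5.4}), which yields $\nu\int_{\Omega^t}|u_{,r}/r|^2\lesssim D_1^2$. This is precisely why $D_1^2$ appears in $D_5^2$ without a factor of $D_2^2$.

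Your boundary discussion also understates one point. After integrating $-\nu\int(\Delta u)_{,r}u_{,r}$ by parts, a nonvanishing boundary integral $-\nu\int_{-a}^a(u_{,rr}+u_{,r}/r)u_{,r}\,r|_{r=R}\,\d z$ survives on $S_1$ (since $u_{,r}|_{r=R}\ne 0$). The paper kills this by restricting the swirl equation \eqref{u_eq} to $S_1$, which gives $-\nu(u_{,rr}+u_{,r}/r)=f_0$ there; the resulting $\int f_0 u_{,r}r|_{r=R}$ then cancels exactly against the boundary term coming from the integration by parts of $\int f_{0,r}u_{,r}$. Without this cancellation the $S_1$ contribution is not controlled by the data alone.
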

\begin{proof}

We differentiate $\eqref{u_eq}_1$ with respect to $z$, multiply by $u_{,z}$ and integrate over $\Omega$ to obtain 
\eqnb\label{5.6}
\begin{split}
\frac12 \frac{\d }{\d t } |u_{,z}|_{2,\Omega}^2&-\nu\intop_\Omega \div(\nabla u_{,z}u_{,z})+\nu \intop_{\Omega}  |\nabla u_{,z}|^2 +2\nu\intop_\Omega u_{,zr}u_{,z}\d r\d z\\
&+\intop_\Omega (v_{,z}\cdot\nabla) u\cdot u_{,z}+ \frac12 \intop_\Omega v\cdot\nabla ( u_{,z}^2 )=\intop_\Omega f_{0,z}u_{,z}
\end{split}
\eqne 
We note that the second term vanishes due to the boundary condition $\left. u_{,z}\right|_S=0$ (recall \eqref{bcs}). The fourth term also vanishes since it equals
\[
\nu\intop_\Omega\partial_r(u_{,z}^2)\d r\d z=\nu\intop_{-a}^au_{,z}^2\bigg|_{r=0}^{r=R}\d z=0,
\]
where we used $u_{,z}|_{r=R}=0$ and the fact that $u_{,z}|_{r=0}=0$ (recall \eqref{2.20}). The same is true of the sixth term, which takes the form
\[
\frac12 \intop_\Omega v\cdot\nabla u_{,z}^2\d x=\frac12 \intop_Sv\cdot n \,u_{,z}^2\d S =0,
\]
since $v\cdot\bar n|_S=0$ (recall~\eqref{bcs}).  Moreover, integrating by parts in the fifth term in \eqref{5.6}, and noting that the boundary term vanishes (since $u_{,z}=0$ on $S$), we obtain  
\[
\left| \intop_\Omega (v_{,z}\cdot\nabla) u\cdot u_{,z} \right| = \left| \intop_\Omega v_{,z}\cdot\nabla u_{,z}u \right| \leq \frac{\nu}4 \intop_\Omega|\nabla u_{,z}|^2+\frac{C}{\nu}  |u|_{\infty,\Omega}^2\intop_\Omega v_{,z}^2.
\]
Finally, integrating the right-hand side of \eqref{5.6} by parts in $z$ we obtain 
\[
\left| \intop_\Omega f_{0,z}u_{,z} \right| =\left| \intop_\Omega f_0u_{,zz} \right| \leq \frac{\nu}{4} |u_{,zz}|^2_{2,\Omega }+ \frac{C}{\nu }|f_0 |_{2,\Omega }^2,
\]
since $\int_{S_2} \left[ f_0 u_{,z} \right]_{z=-a}^{z=a} r\,\d r=0$ by the boundary condition $u_{,z}|_{S_2}=0$ (recall \eqref{bcs}). 
Using the above results in \eqref{5.6} gives
\[
\frac{\d }{\d t} |u_{,z}|_{2,\Omega}^2+\nu|\nabla u_{,z}|_{2,\Omega}^2\lec \frac{1}{\nu } |u|_{\infty,\Omega}^2|v_{,z}|_{2,\Omega}^2+ \frac{1}{\nu }|f_0|_{2,\Omega}^2.
\]
Integrating in $t\in (0,T)$ gives 
\begin{equation}
|u_{,z}(t)|_{2,\Omega}^2+\nu|\nabla u_{,z}|_{2,\Omega^t}^2 \lec\frac{1}{\nu } |u|_{\infty,\Omega^t}^2|v_{,z}|_{2,\Omega^t}^2+\frac{1}{\nu } |u_{,z}(0)|_{2,\Omega}^2+\frac{1}{\nu } |f_0|_{2,\Omega^t}^2\lec D_4^2 ,
\label{5.8}
\end{equation}
proving \eqref{5.2}, where we used the energy inequality \eqref{2.1} and the maximum principle \eqref{2.9} for the swirl $u$.\\

As for \eqref{5.3} we differentiate $(\ref{u_eq})_1$ with respect to $r$, multiply the resulting equation by $u_{,r}$ and integrate over $\Omega$, to obtain
\eqnb\label{5.9}
\begin{split}
\frac12 \frac{\d }{\d t} |u_{,r}|_{2,\Omega}^2+\intop_\Omega v_{,r}\cdot\nabla uu_{,r}+\intop_\Omega v\cdot\nabla u_{,r}u_{,r}&-\nu\intop_\Omega(\Delta u)_{,r}u_{,r}\\ &+2\nu\intop_\Omega{1\over r}u_{,rr}u_{,r}-2\nu\intop_\Omega{u_{,r}^2\over r^2}=\intop_\Omega f_{0,r}u_{,r}.
\end{split}
\eqne
We now examine the particular terms in \eqref{5.9}. The second term equals
\eqnb\label{5.9a}
\begin{split}
\intop_\Omega v_{,r}\cdot\nabla u\,u_{,r}r\d r\d z&=\intop_\Omega(rv_{r,r}u_{,r}+rv_{z,r}u_{,z})u_{,r}\d r\d z\\
&=-\intop_\Omega[(rv_{r,r}u_{,r})_{,r}+(rv_{z,r}u_{,r})_{,z}]u\,\d r\d z=: I,
\end{split}
\eqne
where we integrated by parts (in $r$ and $z$ respectively) and used the boundary conditions $u|_{S_1}=u|_{r=0}=0$ (recall \eqref{bcs} and \eqref{2.20}) and $v_{z,r}|_{S_2}=0$ (recall \eqref{bcs}). Continuing, we have  
\[
I=-\intop_\Omega[(rv_{r,r})_{,r}+(rv_{z,r})_{,z}]u_{,r}u\,\d r\d z -\intop_\Omega[rv_{r,r}u_{,rr}+rv_{z,r}u_{,rz}]u\,\d r\d z=:  I_1+I_2.
\]
We now note that differentiating the divergence-free condition $\eqref{nse_cylin}_4$ in $r$ gives $
v_{r,rr}+v_{z,zr}+{v_{r,r}\over r}-{v_r\over r^2}=0
$, which shows that the part of the integrand of $I_1$ in the square brackets equals to $v_r/r$, and so
\[I_1=-\intop_\Omega{v_r\over r}u_{,r}u\,\d r\d z.\]
As for $I_2$ we use Young's inequality to obtain
\[
|I_2|\le \frac{\nu}2 (|u_{,rr}|_{2,\Omega}^2+|u_{,rz}|_{2,\Omega}^2)+\frac{C}{\nu } |u|_{\infty,\Omega}^2 (|v_{r,r}|_{2,\Omega}^2+|v_{z,r}|_{2,\Omega}^2).
\]
The third term on the left-hand side of \eqref{5.9}  equals
$$
{1\over 2}\intop_\Omega v\cdot\nabla u_{,r}^2={1\over 2}\intop_\Omega\divv(vu_{,r}^2)=0
$$
since  $v\cdot\bar n|_S=0$. As for the fourth term in \eqref{5.9}, we have
\eqnb
\begin{split}
-\intop_\Omega&(\Delta u)_{,r}u_{,r}=-\intop_\Omega\bigg(u_{,rrr}+\bigg({1\over r}u_{,r}\bigg)_{,r}+u_{,rzz}\bigg)u_{,r}r\d r\d z\\
&=-\intop_\Omega\bigg[\bigg(u_{,rr}+{1\over r}u_{,r}\bigg)u_{,r}r\bigg]_{,r}\d r\d z+\intop_\Omega u_{,rr}(u_{,r}r)_{,r}\d r\d z+\intop_\Omega{1\over r}u_{,r}(u_{,r}r)_{,r}\d r\d z+\intop_\Omega u_{,rz}^2\\
&=-\intop_{-a}^a\left[ \bigg(u_{,rr}+{1\over r}u_{,r}\bigg)u_{,r}r\right]_{r=0}^{r=R}\d z+\intop_\Omega (u_{,rr}^2+u_{,rz}^2)+\intop_\Omega{u_{,r}^2\over r^2}+2\intop_\Omega u_{,rr}u_{,r}\d r\d z.
\end{split}
\eqne
Using the above expressions in (\ref{5.9}) yields
\eqnb\label{5.13}
\begin{split}
\frac12 \frac{\d }{\d t} |u_{,r}|_{2,\Omega}^2&+\frac{\nu}2\intop_\Omega\left( u_{,rr}^2+u_{,zr}^2\right)-\nu\intop_\Omega{u_{,r}^2\over r^2}-\nu\intop_{-a}^a\left[ \left(u_{,rr}+{1\over r}u_{,r}\right)u_{,r}r\right]_{r=0}^{r=R}\d z+4\nu\intop_\Omega u_{,rr}u_{,r}\d r\d z\\
&\le \intop_\Omega f_{0,r}u_{,r}+\intop_\Omega{v_r\over r}{u_{,r}\over r}u+ \frac{C}{\nu } |u|_{\infty,\Omega}^2 \left( |v_{r,r}|_{2,\Omega}^2+|v_{z,r}|_{2,\Omega}^2\right).
\end{split}
\eqne
The last term on the left-hand side of \eqref{5.13} equals
$$
2\nu\intop_{-a}^au_{,r}^2\bigg|_{r=0}^{r=R}\d z=2\nu\intop_{-a}^au_{,r}^2\bigg|_{r=R}\d z
$$
since the expansion \eqref{2.20} implies that 
\eqnb\label{exp_u}
u=b_1(z,t)r^2+b_2(z,t)r^4+\cdots,
\eqne
so that $u_{,r}|_{r=0}=0$. Moreover, the above expansion of $u$ lets us examine the fourth term on the left-hand side of \eqref{5.13} to obtain
\eqnb\label{001}
-\nu\intop_{-a}^a\bigg(u_{,rr}+{1\over r}u_{,r}\bigg)u_{,r}r\bigg|_{r=R}\d z = -2\nu\intop_{-a}^a \left[ u_{,r}^2\right]^{r=R}_{r=0}\d z+\intop_{-a}^a \left. f_0u_{,r}r\right|_{r=R}\d z,
\eqne
where, in the last equality, we used the swirl equation $\eqref{u_eq}_1$ projected onto $S_1$. 

As for the first term on the right-hand side of \eqref{5.13}, integration by parts in $r$ gives
\[
\intop_{-a}^a\left. f_0u_{,r}r\right|_{r=R}\d z -\intop_\Omega f_0u_{,rr}-\intop_\Omega f_0u_{,r}\d r\d z,
\]
where we used \eqref{exp_u} again to note that $u_{,r}|_{r=0}=0$. We note that the first term above cancels with the last term of \eqref{001}, while the remaining terms can be estimated using Young's inequality by 
$$
\frac{\nu}4 |u_{,rr}|_{2,\Omega}^2+ \nu \left|{u_{,r}\over r}\right|_{2,\Omega}^2+\frac{C}{\nu} |f_0|_{2,\Omega}^2
$$
Using the above estimates in (\ref{5.13}) and simplifying the result we get
\[
\begin{split}
&\frac12 \frac{\d }{\d t} |u_{,r}|_{2,\Omega}^2+{\nu\over 4}\intop_\Omega(u_{,rr}^2+u_{,rz}^2)\le 2\nu \intop_\Omega{u_{,r}^2\over r^2}+\intop_\Omega{v_r\over r}{u_{,r}\over r}u+c|u|_{\infty,\Omega}^2(|v_{r,r}|_{2,\Omega}^2+|v_{z,r}|_{2,\Omega}^2)+\frac{C}{\nu}|f_0|_{2,\Omega}^2.
\end{split}
\]
Integrating with respect to $t\in (0,T)$, using the maximum principle (Lemma~\ref{l2.4}) and the energy inequality \eqref{2.1}, which in particular implies that 
\eqnb\label{5.4}
\nu \intop_{\Omega^t} \frac{u_{,z}^2+u_{,r}^2}{r^2} \lec \nu \intop_{\Omega^t}  \left( v_{\varphi,z}^2 + v_{\varphi,r}^2 + \frac{v_{\varphi}^2}{r^2} \right) \lec D_1^2,
\eqne
we obtain
\[|u_{,r}(t)|_{2,\Omega}^2+\nu\left( |u_{,rr}|_{2,\Omega^t}^2+|u_{,zr}|_{2,\Omega^t}^2\right)\lec D_1^2+\frac{1}{\nu} D_2D_1^2+\frac{1}{\nu}|f_0|_{2,\Omega^t}^2+|u_{,r}(0)|_{2,\Omega}^2\leq D_5^2,\]
which shows (\ref{5.3}), as required. 
\end{proof}

\section{Order reduction estimates}\label{sec_order_red}
Here we prove the order reduction estimates of Theorem~\ref{T1}. We first consider \eqref{est1}.
\begin{lemma}[Order reduction for $\omega_r$, $\omega_z$]\label{l6.1}
Any regular solution $v$ to \eqref{nse}--\eqref{bcs}  satisfies 
\eqnb\label{6.1}
\begin{split}
\|\omega_r\|_{V(\Omega^t)}^2+\|\omega_z\|_{V(\Omega^t)}^2&+\bigg|{\omega_r\over r}\bigg|_{2,\Omega^t}^2 \\
&\leq C_{D_1,D_2,D_4,D_5} \bigg[{R^{2\delta }\over\delta^2} |v_\varphi|_{\infty,\Omega^t}^{2\delta }+{R^{\delta }\over\delta }+1\bigg] \left( |\Gamma_{,z}|_{2,\Omega^t} +|\Gamma_{,r}|_{2,\Omega^t} \right) +cD_8^2
\end{split}
\eqne
for every $\delta \in (0,1)$.
\end{lemma}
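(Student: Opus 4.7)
The plan is to run a standard energy argument on the $\omega_r$- and $\omega_z$-equations in~\eqref{1.9} and reduce every resulting vortex-stretching cubic term to the product of (a)~derivatives of the swirl $u$, bounded via Lemma~\ref{l5.1} by the constants $D_4,D_5$, and (b)~derivatives of the modified stream function $\psi_1$, bounded in terms of $\Gamma$ and its derivatives via the elliptic/weighted estimates of Lemmas~\ref{l2.18}, \ref{l3.1}, \ref{l3.2}. The identities $\omega_r=-u_{,z}/r=-v_{\varphi,z}$ and $\omega_z=u_{,r}/r=v_{\varphi,r}+v_\varphi/r$ from~\eqref{1.13}, together with the formulas~\eqref{1.22} for $v_{r,r},v_{r,z},v_{z,r},v_{z,z}$ in terms of $\psi_1$, are the essential algebraic bridge.

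First I would test $\eqref{1.9}_1$ by $\omega_r$ and integrate over $\Omega$. Since $\omega_r|_S=0$ (recall~\eqref{bcs1}), integration by parts produces no surface terms and yields
\[
\tfrac12\tfrac{\d}{\d t}|\omega_r|_{2,\Omega}^2+\nu |\nabla\omega_r|_{2,\Omega}^2+\nu |\omega_r/r|_{2,\Omega}^2=\int_\Omega(\omega_r v_{r,r}+\omega_z v_{r,z})\omega_r+\int_\Omega F_r\omega_r,
\]
with the $|\omega_r/r|_{2,\Omega}^2$ term arising automatically from the $\nu\omega_r/r^2$ summand in $\eqref{1.9}_1$. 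The analogous identity for $\omega_z$ is slightly more delicate, since only $\omega_z=v_{\varphi,r}$ holds on $S_1$; restricting~$\eqref{nse_cylin}_2$ to $S_1$ trades the surviving boundary integral for a contribution $\|f_\varphi\|_{L_2(S_1^t)}^2$, which is absorbed into~$D_8^2$. The forcing integrals $\int F_r\omega_r,\int F_z\omega_z$ are handled by Young's inequality and contribute to $D_7^2\subset D_8^2$.

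The heart of the argument is the stretching integrals. I would integrate by parts in the $z$- and $r$-variables in $\int_\Omega(\omega_r v_{r,r}+\omega_z v_{r,z})\omega_r$, using $\omega_r=-v_{\varphi,z}$ and $\omega_z=v_{\varphi,r}+v_\varphi/r$ to move derivatives off $v_\varphi$; the boundary terms vanish thanks to $v_\varphi|_{r=R}=0$, $v_{\varphi,z}|_{S_2}=0$ and the near-axis expansions~\eqref{2.19}--\eqref{2.23}. Cancellation of the two $v_\varphi v_{r,rz}\omega_r$-type contributions (one from each original summand) leaves the clean form
\[
\int_\Omega v_\varphi\bigl(v_{r,r}\omega_{r,z}-v_{r,z}\omega_{r,r}\bigr),
\]
which isolates $v_\varphi$ as the natural $L^\infty$-factor. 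An analogous manipulation in the $\omega_z$-stretching produces $\int_\Omega v_\varphi(v_{z,r}\omega_{z,z}-v_{z,z}\omega_{z,r})$, modulo boundary contributions on $S_1$ already accounted for above.

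The main obstacle will be producing the precise $|v_\varphi|_{\infty,\Omega^t}^{2\delta}R^{2\delta}\delta^{-2}$ prefactor. For this I would adapt the swirl-splitting device of Lemma~\ref{l4.1}: write $v_\varphi=(rv_\varphi)^{1-\delta}v_\varphi^\delta\cdot r^{\delta-1}$, use the maximum principle $|rv_\varphi|_{\infty,\Omega^t}\le D_2$ (Lemma~\ref{l2.4}) to peel off the bounded factor, and distribute the residual weight $r^{\delta-1}$ onto the $\psi_1$-derivatives supplied by~\eqref{1.22} in place of $v_{r,r},v_{r,z}$. The weighted estimate~\eqref{2.26} with $\mu=\delta$ then yields $|\psi_{1,zz}r^{\delta-1}|_{2,\Omega}\lesssim \delta^{-1/2}R^\delta|\Gamma_{,z}|_{2,\Omega}$ (and similarly for $\psi_{1,rz}$, with $R^\delta/\sqrt\delta$ replaced by $R^\delta$), while the Hardy inequality~\eqref{2.13} handles the lighter $r^\delta$-weighted pieces. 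The factors $|\omega_{r,z}|_{2,\Omega},|\omega_{r,r}|_{2,\Omega},\ldots$ produced by Cauchy--Schwarz are then absorbed into the viscous dissipation on the left via Young's inequality, and integration in $t\in(0,T)$ delivers~\eqref{6.1}; the powers $R^\delta/\delta$ and $1$ on the right-hand side arise, respectively, from the ``safe'' stretching pieces carrying one explicit $r$-factor and from the pure $|\Gamma|$-derivative contributions in which no $v_\varphi$ splitting is needed.
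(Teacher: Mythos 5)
Your skeleton matches the paper's: test $\eqref{1.9}_1$ by $\omega_r$ and $\eqref{1.9}_3$ by $\omega_z$, handle the surviving boundary integral on $S_1$ by restricting $\eqref{nse_cylin}_2$ to $S_1$ (trading it for $f_\varphi$-terms controlled, via traces and Lemma~\ref{l5.1}, by $D_8$), and convert the stretching integrals into swirl and modified-stream-function quantities using \eqref{1.13}, \eqref{1.22}, the splitting $|v_\varphi|\le|rv_\varphi|^{1-\delta}|v_\varphi|^{\delta}r^{\delta-1}$, and the weighted/elliptic estimates of Lemmas~\ref{l2.18}--\ref{l3.2}. However, there is a genuine gap in how you propose to close the stretching estimate, and it concerns precisely the feature of \eqref{6.1} that makes it an order-reduction estimate: the right-hand side is \emph{linear} in $|\Gamma_{,z}|_{2,\Omega^t}+|\Gamma_{,r}|_{2,\Omega^t}$. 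Your plan is to Cauchy--Schwarz each stretching term into a weighted $\psi_1$-derivative times one of $|\omega_{r,z}|_{2,\Omega},|\omega_{z,r}|_{2,\Omega},\dots$ and then absorb the latter into the dissipation by Young's inequality. Young's inequality necessarily squares the former, so after \eqref{2.26} and \eqref{2.13} you end up with $|\Gamma_{,z}|_{2,\Omega^t}^{2}$ on the right, not the first power. That weaker inequality only yields $\|\omega_r\|_{V(\Omega^t)}^2\lesssim(\cdots)X^2$, hence $|\Phi|_{2,\Omega^t}\lesssim X$ instead of the $X^{1/2}$ of \eqref{est1}, and the absorption argument in Theorem~\ref{t1.3} collapses.

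The paper gets the first power by a different pairing: it keeps $\omega_r=-u_{,z}/r$ and $\omega_z=u_{,r}/r$ intact and integrates by parts so that each cubic product contains one factor bounded by a \emph{constant} --- either $|u_{,z}/r|_{2,\Omega^t},|u_{,r}/r|_{2,\Omega^t}\lesssim D_1/\sqrt{\nu}$ from the energy inequality \eqref{5.4}, or $|u_{,zz}|_{2,\Omega^t},|u_{,rz}|_{2,\Omega^t},|u_{,rr}|_{2,\Omega^t}\lesssim (D_4+D_5)/\sqrt{\nu}$ from Lemma~\ref{l5.1}, together with $|u|_\infty\le D_2$ --- leaving the $\Gamma$-derivative factor to the first power. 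In the two terms where absorption into the dissipation is unavoidable, the squared quantity is $|\psi_{1,z}u/r|_{2,\Omega^t}^2$, and the interpolation \eqref{psi1z_r_est}, namely $|\psi_{1,z}/r^{1-\delta}|_{2,\Omega^t}^2\lesssim \delta^{-2}R^{2\delta}|D^2_{r,z}\psi_{1,z}|_{2,\Omega^t}|\psi_{1,z}|_{2,\Omega^t}\lesssim \delta^{-2}R^{2\delta}D_1|\Gamma_{,z}|_{2,\Omega^t}$ (Hardy, \eqref{2.17}, and the a priori bound \eqref{2.15}), brings the square back down to the first power. Neither device appears in your proposal. A secondary issue: your integration by parts moves derivatives off $v_\varphi$ rather than off $u$, which, while algebraically correct (the cancellation you claim does occur), produces terms such as $\int v_\varphi\,\psi_{1,z}\,\omega_{r,z}$ and $\int v_\varphi\,\psi_{1,r}\,\omega_{z,z}$ in which $\omega_{r,z}=-u_{,zz}/r$ and $\omega_{z,z}=u_{,rz}/r$ carry a loose factor $1/r$ that must land either on $\psi_{1,z}$ (giving $\psi_{1,z}/r^{2-\delta}$, too singular for \eqref{2.13} or \eqref{2.26}) or on the swirl derivative (giving $u_{,zz}/r$, not controlled by Lemma~\ref{l5.1}); the paper's grouping of the stretching term, which never separates $u_{,z}$ or $u_{,r}$ from the accompanying $1/r$, is designed to avoid exactly this.
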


\begin{proof}
We multiply $\eqref{1.9}_1$ by $\omega_r$, $\eqref{1.9}_3$ by $\omega_z$, we add the resulting equalities and integrate over $\Omega^t$ to obtain
\eqnb\label{6.2}
\begin{split}
&{1\over 2}\left( |\omega_r(t)|_{2,\Omega}^2+|\omega_z(t)|_{2,\Omega}^2\right) +\nu \left( |\nabla\omega_r|_{2,\Omega^t}^2+|\nabla\omega_z|_{2,\Omega^t}^2+\left|\frac{\omega_r}r\right|_{2,\Omega^t}^2\right) \underbrace{-\nu\intop_{S^t} \left( n\cdot\nabla\omega_r\omega_r + n\cdot\nabla\omega_z\omega_z\right) }_{=:I}\\
&\hspace{1.5cm}=\underbrace{\intop_{\Omega^t}\left( v_{r,r}\omega_r^2+v_{z,z}\omega_z^2+(v_{r,z}+v_{z,r})\omega_r\omega_z\right)}_{=:J}+\intop_{\Omega^t}(F_r\omega_r+F_z\omega_z) +{1\over 2}(|\omega_r(0)|_{2,\Omega}^2+|\omega_z(0)|_{2,\Omega}^2).
\end{split}
\eqne
We will show that
\eqnb\label{step1}
|I| \lec \|f_\varphi\|_{L_2(0,t;L_2(S_1))}(D_4+D_5) 
\eqne
in Step 1 below, and that
\eqnb\label{step2}
\begin{split}
|J| &\leq \frac{\nu}2 \left( \left|\omega_{r,z} \right|_{2,\Omega^t}^2+\left| \omega_{z,r} \right|_{2,\Omega^t}^2 \right) \\
&\hspace{2cm} +  \frac{C_{D_1,D_2,D_4,D_5}}{\nu} \left( \frac{R^{\delta }}{\delta } |v_\varphi |^{\delta }_{\infty,\Omega^t}  +  \frac{R^{2\delta }}{\delta^2} |v_\varphi |^{2\delta }_{\infty,\Omega^t} +1 \right)\left(   \left| \Gamma_{,r} \right|_{2,\Omega^t} + \left| \Gamma_{,z} \right|_{2,\Omega^t}\right)
\end{split}
\eqne
for every $\delta \in (0,1/2)$, $\nu>0 $ in Step 2 below. The claim then follows by applying these in \eqref{6.2} and estimating the remaining terms on the right-hand side by the Cauchy-Schwarz inequality and the energy inequality \eqref{2.1}.\\

\noindent\texttt{Step 1.} We show \eqref{step1}.\\

We note that all the boundary terms included in $I$ vanish, except for  
\eqnb\label{6.2a}
 -\nu\intop_{S_1^t} n\cdot\nabla\omega_z\omega_z,
\eqne
since $\omega_r|_{S}=0$ and $n\cdot \nabla \omega_z = \omega_{z,z}= 0 $ on $S_2$ (recall \eqref{bcs1}). As for \eqref{6.2a}, we note that $\omega_z=v_{\varphi,r}+v_\varphi/ r$ (recall $\eqref{1.13}_3$), and so
\[\begin{split}
I  &= -\nu R\intop_0^t\intop_{-a}^a\partial_r \bigg(v_{\varphi,r}+{v_\varphi\over r}\bigg)\bigg(v_{\varphi,r}+{v_\varphi\over r}\bigg)\bigg|_{S_1}\d z \d \ta = -\nu R\intop_0^t\intop_{-a}^a \bigg(v_{\varphi,rr}+{v_{\varphi,r}\over r}\bigg) v_{\varphi,r}\bigg|_{S_1}\d z \d \ta \\
&= R\intop_0^t\intop_{-a}^a \left. f_\varphi v_{\varphi,r}\right|_{r=R}\d z \d \ta =\intop_0^t\intop_{-a}^af_\varphi\bigg(u_{,r}-{1\over R}u\bigg)\bigg|_{r=R}\d z \d \ta ,
\end{split}
\]
where we noted that $v_\varphi|_{r=R}=0$ (recall \eqref{bcs}) in the second equality, and that 
$$
-\nu\bigg(v_{\varphi,rr}+{1\over r}v_{\varphi,r}\bigg)=f_\varphi\quad {\rm on}\ \ S_1
$$
(a consequence of projecting the equation $\eqref{nse_cylin}_2$ for $v_\varphi$ onto $S_1$) in the third equality.
 
Hence, applying the H\"older inequality yields
\[
|I |\lec \intop_0^t|f_\varphi|_{2,S_1}(|u_{,r}|_{2,S_1}+|u|_{2,S_1})\d \ta \lec \|f_\varphi\|_{L_2(0,t;L_2(S_1))}(D_4+D_5) ,
\]
as required, where we used trace estimates and Lemma \ref{l5.1} in the last step. \\

\noindent\texttt{Step 2.} We show \eqref{step2}.\\

To this end we recall \eqref{1.13}, \eqref{1.15} that $v_{r,r}=-\psi_{,zr}$, $\omega_r=-u_{,z}/r$, $v_{z,z}= \psi_{,rz}+ \psi_{,z}/r$, $\omega_z= u_{,r}/r$ $v_{r,z}=-\psi_{zz}$, $v_{z,r}=\psi_{,rr}+\psi_{,r}/r-\psi / r^2$, so that 
\[J=\intop_{\Omega^t}\left[-\psi_{,zr}\frac{u_{,z}^2 }{r^2} +\left(\psi_{,rz}+{\psi_{,z}\over r}\right)\frac{u_{,r}^2}{r^2} -\left(-\psi_{,zz}+\psi_{,rr}+{1\over r}\psi_{,r}-{\psi\over r^2}\right)\frac{u_{,z}u_{,r}}{r^2}  \right]=:  J_1+J_2+J_3.\]
For $J_1$, we integrate by parts with respect to $z$ and use that $u_{,z}|_{S_2}=0$ (recall \eqref{bcs1}) to obtain
\[
J_1=\intop_{\Omega^t}\psi_{,zzr}{1\over r^2}u_{,z}u +\intop_{\Omega^t}\psi_{,zr}{1\over r^2}u_{,zz}u =: J_{11} + J_{12} .\]
Since $\psi=r\psi_1$,
\[
J_{11}=\intop_{\Omega^t}\bigg({\psi_{1,zz}\over r}+\psi_{1,zzr}\bigg){u_{,z}\over r}u =: J_{111} + J_{112}.
\]
We estimate $J_{111}$ by writing 
\[
\begin{split}
|J_{111}|&\le\intop_{\Omega^t}\bigg|{\psi_{1,zz}\over r^{1-\delta }}\bigg|\,\bigg|{u_{,z}\over r}\bigg|\,|v_\varphi|^{\delta }|u|^{1-\delta }\\
&\le D_2^{1-\delta }|v_\varphi|_{\infty,\Omega^t}^{\delta }\bigg|{u_{,z}\over r}\bigg|_{2,\Omega^t}\bigg|{\psi_{1,zz}\over r^{1-\delta }}\bigg|_{2,\Omega^t}\\
&\lec \frac{ D_1 D_2^{1-\delta }R^{\delta }}{\nu^{1/2}\delta }|v_\varphi|_{\infty,\Omega^t}^{\delta }|\Gamma_{,z}|_{2,\Omega^t} 
\end{split}
\]
for every $\delta \in (0,1)$, where we used the Cauchy-Schwarz inequality in the second inequality, and \eqref{5.4} as well as \eqref{2.26} in the form 
$$
\bigg|{\psi_{1,zz}\over r^{1-\delta }}\bigg|_{2,\Omega^t}\lec {R^{\delta }\over\delta }|\Gamma_{,z}|_{2,\Omega^t}
$$
in the third inequality. As for $J_{112}$, the Cauchy-Schwarz inequality gives 
\begin{equation}
|J_{112}|\le|u|_{\infty,\Omega^t}\bigg|{u_{,z}\over r}\bigg|_{2,\Omega^t}|\psi_{1,zzr}|_{2,\Omega^t}\lec \frac{D_1D_2}{\nu^{1/2}} |\Gamma_{,z}|_{2,\Omega^t},
\label{6.8}
\end{equation}
where we used \eqref{5.4}, the maximum principle for $u$ (Lemma~\ref{l2.4}) and \eqref{3.13}. 

For $J_{12}$ we also recall that $\psi=r\psi_1$ to write
$$
J_{12}=\intop_{\Omega^t}\bigg({\psi_{1,z}\over r^2}+{\psi_{1,zr}\over r}\bigg)u_{,zz}u=: J_{121}+J_{122},
$$
and we estimate $J_{121}$ by
\eqnb\label{j121}
\begin{split}
|J_{121}| &\le \frac{\nu}2 \left|\frac{u_{,zz}}r \right|_{2,\Omega^t}^2+\frac{c}{\nu } \left| \frac{\psi_{1,z}u }r \right|^2_{2,\Omega^t }\\
&\le \frac{\nu}2 \left|\omega_{r,z} \right|_{2,\Omega^t}^2+\frac{c}{\nu } \left| u\right|^{2(1-\delta  )}_{\infty,\Omega^t } | v_\varphi |_{\infty,\Omega^t}^{2\delta } \left| \frac{\psi_{1,z} }{r^{1-\delta }} \right|^2_{2,\Omega^t }\\
&\le \frac{\nu}2 \left|\omega_{r,z} \right|_{2,\Omega^t}^2+\frac{cR^{2\delta  }}{\nu \delta^2 } D_1 D_2^{2(1-\delta  )}| v_\varphi |_{\infty,\Omega^t}^{2\delta } |\Gamma_{,z} |_{2,\Omega^t} ,
\end{split}
\eqne
where we used, in the second line, the fact that $\omega_r = u_{,z}/r$ (recall~\eqref{1.13}), and, in the third line, the maximum principle \eqref{2.9} and the inequality 
\eqnb\label{psi1z_r_est}
\left| \frac{\psi_{1,z} }{r^{1-\delta }} \right|^2_{2,\Omega^t } \lec \frac{R^{2\delta }}{\delta^2 } |\psi_{1,rz}|^2_{2,\Omega^t}  \lec \frac{R^{2\delta }}{\delta^2 } \left| D^2_{r,z} \psi_{1,z} \right|_{2,\Omega^t }\left| \psi_{1,z} \right|_{2,\Omega^t } \lec  \frac{D_1 R^{2\delta }}{\delta^2 } \left| \Gamma_{,z} \right|_{2,\Omega^t } ,
\eqne
which is a consequence of the Hardy inequality (i.e. Lemma~\ref{l2.6}, applied with $\beta\coloneqq 1/2-\delta $),  the interpolation inequality \eqref{2.17}, and  the estimates $|\psi_{1,z}|_{2,\Omega^t} \lec D_1$ (recall~\eqref{2.15}) and $|D^2_{rz} \psi_{1,z}|_{2,\Omega^t} \lec |\Gamma_{,z} |_{2,\Omega^t}$ (recall~\eqref{3.14}).

On the other hand, for $J_{122}$, we obtain  
\begin{equation}
|J_{122}|\le|u|_{\infty,\Omega^t}|u_{,zz}|_{2,\Omega^t}\bigg|{\psi_{1,zr}\over r}\bigg|_{2,\Omega^t}\lec \frac{ D_2D_4}{\nu^{1/2}} |\Gamma_{,z}|_{2,\Omega^t},
\label{6.9}
\end{equation}
where we used the maximum principle \eqref{2.9},  \eqref{5.2} and the $H^3$ estimate \eqref{3.24} of $\psi$. 

For $J_2$, we have
\[
\begin{split}
J_2&=\intop_{\Omega^t}\bigg(\psi_{,rz}+{\psi_{,z}\over r}\bigg){1\over r}u_{,r}u_{,r}\d r\d z\d \ta\\
&=-\intop_{\Omega^t}\bigg(\psi_{,rz}+{\psi_{,z}\over r}\bigg)_{,r}{1\over r}u_{,r}u\,\d r\d z\d \ta-\intop_{\Omega^t}\bigg(\psi_{,rz}+{\psi_{,z}\over r}\bigg)\bigg({1\over r}u_{,r}\bigg)_{,r}u\,\d r\d z\d \ta \\
&=-\intop_{\Omega^t}\left(\psi_{,rz}+{\psi_{,z}\over r}\right)_{,r}\frac{u_{,r}}{r^2} u- \intop_{\Omega^t}\left(\psi_{,rz}+{\psi_{,z}\over r}\right){1\over r}\left(\frac{u_{,r}}r \right)_{,r}u =: J_{21}+J_{22},
\end{split}\]
where, in the second line, we integrated by parts with respect to $r$ and observed that both boundary terms vanish, since  $u|_{r=R}=0$ and 
$$
\bigg(\psi_{,rz}+{\psi_{,z}\over r}\bigg){1\over r}u_{,r}u\bigg|_{r=0}=0,
$$
due to $u|_{r=0}=0$ and the expansions near $r=0$: 
\[
\begin{cases}
&\psi=a_1(z,t)r+a_2(z,t)r^3+\dots,\\
&v_\varphi=b_1(z,t)r+b_2(z,t)r^3+\cdots 
\end{cases} \Rightarrow \begin{cases} 
&\psi_{,rz}\sim a_{1,z}, \\
&{\psi_{,z}\over r}\sim a_{1,z}, \\
&{1\over r}u_{,r}={v_\varphi\over r}+v_{\varphi,r}\sim 2b_1,
\end{cases}
\]
see \cite{LW}. For $J_{21}$ we use that $\psi=r\psi_1$ to obtain 
\[
\begin{split}
\left| J_{21} \right| &= \left| \intop_{\Omega^t}(3\psi_{1,rz}+r\psi_{1,rrz})\frac{u_{,r}}{r^2} u \right| \\
&\leq 3\left|{\psi_{1,rz}\over r}\right|_{2,\Omega^t}\left|\frac{u_{,r}}{r} \right|_{2,\Omega^t} |u|_{\infty,\Omega^t}+|\psi_{1,rrz}|_{2,\Omega^t}\left| \frac{u_{,r}}{r} \right|_{2,\Omega^t}|u|_{\infty,\Omega^t}\\
&\lec \frac{D_1D_2}{\nu^{1/2}} |\Gamma_{,z}|_{2,\Omega^t},
\end{split}
\]
where we used \eqref{3.24}, \eqref{3.14}, \eqref{5.4} and \eqref{2.9}.

For $J_{22}$, we again use that $\psi=r \psi_1$ to get
\[
\begin{split}
|J_{22}|&=\left| \int_{\Omega^t}(2\psi_{1,z}+r\psi_{1,rz}){1\over r}\left( \frac{u_{,r}}{r} \right)_{,r}u \right|\\
&\leq 2 \left| \int_{\Omega^t}{\psi_{1,z}\over r}\bigg({1\over r}u_{,r}\bigg)_{,r}u \right|+ \left|\int_{\Omega^t}\psi_{1,rz}\bigg({1\over r}u_{,r}\bigg)_{,r}u\right|\\
& \leq \frac{\nu}2 \left| \left( \frac{u_{,r}}r \right)_{,r} \right|_{2,\Omega^t}^2 + \frac{c}{\nu}   \left| \frac{\psi_{1,z} u }r \right|_{2,\Omega^t}^2+ \left|{\psi_{1,rz}\over r}\right|_{2,\Omega^t} \left( |u_{,rr}|_{2,\Omega^t}|u|_{\infty,\Omega^t} + \left|{1\over r}u_{,r}\right|_{2,\Omega^t}|u|_{\infty,\Omega^t}\right) \\
& \leq \frac{\nu}2 \left| \omega_{z,r} \right|_{2,\Omega^t}^2 +c \left( \frac{R^{2\delta  }}{\nu \delta^2 } D_1 D_2^{2(1-\delta  )}| v_\varphi |_{\infty,\Omega^t}^{2\delta }  + \frac{D_2}{\nu^{1/2}} (D_1+D_5) \right)|\Gamma_{,z}|_{2,\Omega^t} ,
\end{split}
\]
where, in the last line, we recalled~\eqref{1.13} that $(u_{,r}/r)_{,r}=\omega_{z,r}$, used the same estimate as in \eqref{j121} to control the second term, and we used \eqref{3.24}, the energy inequality \eqref{5.4}, the maximum principle \eqref{2.9} as well as to control the last term. 

For $J_3$, we use again that $\psi=r\psi_1$ to get
\[
\begin{split}
J_3&=-\intop_{\Omega^t}(-r\psi_{1,zz}+3\psi_{1,r}+r\psi_{1,rr}){1\over r}u_{,r}{1\over r}u_{,z} \\
&= \underbrace{\intop_{\Omega^t}\bigg(-\psi_{1,zzz}+{3\over r}\psi_{1,rz}+\psi_{1,rrz}\bigg){1\over r}u_{,r}u}_{=: J_{31}}+\intop_{\Omega^t}\bigg(-\psi_{1,zz}+{3\over r}\psi_{1,r}+\psi_{1,rr}\bigg){1\over r}u_{,rz}u=: J_{31} + J_{32} ,
\end{split}
\] where, in the second line we integrated by parts with respect to $z$ and used that $\psi_1|_{S_2}=\psi_{1,zz}|_{S_2}=0$ (by \eqref{psi1_eq} and \eqref{bc_psi1,zz}) to deduce that the boundary terms vanish, and, in the last line, we recalled \eqref{psi1_eq} to observe that $-\psi_{1,zz}+3 \psi_{1,r}/r+\psi_{1,rr}=-\Gamma-2\psi_{1,zz}$.
Clearly 
\begin{equation}
|J_{31}|\lec \frac{D_1D_2}{\nu^{1/2}}|\Gamma_{,z}|_{2,\Omega^t},
\label{6.15}
\end{equation}
due to \eqref{2.9}, \eqref{3.13}, \eqref{3.14}, \eqref{3.24} and the energy inequality \eqref{5.4}. 

For $J_{32}$ we have
\[\begin{split}
|J_{32}| &= \left|  \intop_{\Omega^t} \frac{\Gamma u_{,rz}u}r + 2\intop_{\Omega^t}\frac{\psi_{1,zz} u_{,rz}u}r \right| \\
&\leq   | u |_{\infty, \Omega^t}^{1-\delta } | v_\varphi |_{\infty, \Omega^t }^{\delta } \left| u_{,rz} \right|_{2,\Omega^t} \left( \left| \frac{\Gamma}{r^{1-\delta }} \right|_{2,\Omega^t}+2\left| \frac{\psi_{1,zz}}{r^{1-\delta }} \right|_{2,\Omega^t} \right)  \\
&\lec  \frac{D_2^{1-\delta } D_5}{ \nu^{1/2}} | v_\varphi |_{\infty, \Omega^t }^{\delta }\left(  \frac{R^{\delta }}{\delta  } \left| \Gamma_{,r} \right|_{2,\Omega^t} +\frac{R^{\delta }}{\delta^{1/2} } \left| \Gamma_{,z} \right|_{2,\Omega^t}\right) ,
\end{split}
\]
as needed, where we recalled \eqref{psi1_eq} to observe that $-\psi_{1,zz}+3 \psi_{1,r}/r+\psi_{1,rr}=-\Gamma-2\psi_{1,zz}$ in the first equality, and, in the second inequality, we used the maximum principle \eqref{2.9} and the swirl estimate \eqref{5.3}  in the first inequality,  the Hardy inequality \eqref{2.13} in the form
\[
\int_0^R \frac{\Gamma^2}{r^{2(1-\delta )}} r \d r \lec \delta^{-2} \int_0^R r^{2\delta } \Gamma_{,r}^2 r \d r \leq \frac{R^{2\delta }}{\delta^{2}} \int_0^R  \Gamma_{,r}^2 r \d r ,
\]
and the weighted $\psi_{1}$ estimate \eqref{2.26} in the form $|\psi_{1,zz} r^{\delta  -1} |_{2,\Omega^t } \lec \delta ^{-1/2} |\Gamma_{,z}r^{\delta }|_{2,\Omega^t} $
for $\delta \in (0,1/2)$.
\end{proof}

We now focus on \eqref{est2} and \eqref{est3}. 
\begin{lemma}[Order reduction estimates for $v_\varphi$]\label{l4.5}
For every regular solution $v$ to \eqref{nse}--\eqref{bcs}  
\eqnb\label{4.46}
|v_\varphi |_{\infty , \Omega^t} \lec \frac{D_1^{1/4}}{\nu^{1/2}} X^{3/4} + D_{10}.
\eqne
Moreover, if $s\in (1,\infty )$ and \eqref{4.29} holds, i.e.
\[
\frac{|v_\varphi|_{s,\infty,\Omega^t} }{|v_\varphi|_{\infty,\Omega^t}} \ge c_0,
\]
for some constant $c_0\in (0,\infty)$, then 
\eqnb\label{4.31}
|v_\varphi |_{s, \infty, \Omega^t} \lec D_{11} X^{1/(4-2\delta )} + D_{12}  .
\eqne
\end{lemma}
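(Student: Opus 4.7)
For \eqref{4.46}, I will combine the Gagliardo--Nirenberg interpolation with a direct $L^\infty$-passage on the $v_\varphi$-equation. By Lemma~\ref{l2.8} in $\R^3$ (with $p=\infty$, $p_1=p_2=2$, $l=2$, $r=0$, whence $\theta=3/4$), $|v_\varphi|_{\infty,\Omega}\lec |v_\varphi|_{2,\Omega}^{1/4}\|v_\varphi\|_{H^2(\Omega)}^{3/4}$. The energy estimate \eqref{2.1} controls $|v_\varphi|_{2,\Omega}\lec D_1$. For $\|v_\varphi\|_{H^2}$, the identities $\omega_r=-v_{\varphi,z}$ and $\omega_z=v_{\varphi,r}+v_\varphi/r$ (see \eqref{1.13}) express derivatives of $v_\varphi$ in terms of $\omega_r,\omega_z$, whose $V$-norms are controlled by $X$ via Lemma~\ref{l6.1} together with Lemma~\ref{l5.1}, and the factor $D_1^{1/4}/\nu^{1/2}$ arises from the $1/\nu$ normalization inherent in the energy bound on $|\nabla v_\varphi|_2$. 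The additive $D_{10}$ term is obtained by a parallel $L^\infty$-passage (multiplying \eqref{nse_cylin}$_2$ by $v_\varphi|v_\varphi|^{s-2}$ and sending $s\to\infty$), in which the forcing integral is handled as $\int f_\varphi\,v_\varphi|v_\varphi|^{s-2}=\int (f_\varphi/r)\,u\,|v_\varphi|^{s-2}\leq D_2|f_\varphi/r|_\infty |v_\varphi|_{s-2}^{s-2}$, accounting for the $D_2|f_\varphi/r|_\infty$ contribution in $D_{10}^2$.

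For \eqref{4.31}, I test \eqref{nse_cylin}$_2$ with $v_\varphi|v_\varphi|^{s-2}$ and integrate over $\Omega$. Divergence-freeness, the boundary conditions \eqref{bcs}, and $v_r/r=-\psi_{1,z}$ (from \eqref{1.22}) yield
\[
\frac{1}{s}\frac{\d}{\d t}|v_\varphi|_s^s+\frac{4\nu(s-1)}{s^2}\bigl|\nabla|v_\varphi|^{s/2}\bigr|_2^2+\nu\bigl||v_\varphi|^{s/2}/r\bigr|_2^2 =\int_\Omega \psi_{1,z}|v_\varphi|^s+\int_\Omega f_\varphi\,v_\varphi|v_\varphi|^{s-2}.
\]
The core step is the estimate of $\int\psi_{1,z}|v_\varphi|^s$: I factor out $|v_\varphi|_\infty^{s-4+2\delta}$ and apply Cauchy--Schwarz in the $(r\,\d r\,\d z)$ measure to the weighted decomposition $\psi_{1,z}|v_\varphi|^{4-2\delta}=(\psi_{1,z}/r^{1-\delta})\cdot(r^{1-\delta}|v_\varphi|^{4-2\delta})$. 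For the first factor, the Hardy inequality \eqref{2.13} with $\beta=1/2-\delta$ combined with \eqref{psi1_h2} gives $|\psi_{1,z}/r^{1-\delta}|_{2,\Omega}\lec (R^\delta/\delta)|\Gamma|_{2,\Omega}$. For the second factor, the swirl identity $rv_\varphi=u$ together with the maximum principle \eqref{2.9} gives $r^{1-\delta}|v_\varphi|^{4-2\delta}=u^{1-\delta}|v_\varphi|^{3-\delta}\leq D_2^{1-\delta}|v_\varphi|^{3-\delta}$, and a further Gagliardo--Nirenberg interpolation of $|v_\varphi|^{3-\delta}$ between $L^\infty$ and $L^2$ (with $|v_\varphi|_2\lec D_1$) produces the required $D_1$, $D_2^{4-2\delta}$, $R^{2\delta}/\delta^2$, $\nu^{-1}$ factors---the $\nu^{-1}$ arising from Young absorption against the coercive LHS term $\nu\bigl||v_\varphi|^{s/2}/r\bigr|_2^2$. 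The forcing integral is handled by H\"older with $f_\varphi\in L^{10/(1+6\delta)}$ and interpolation of $v_\varphi^{s-1}$ between $L^\infty$ and $L^2$, producing $|v_\varphi|_\infty^{s-4+2\delta}|f_\varphi|_{10/(1+6\delta),\Omega^t}D_1^{3-2\delta}$ after space--time H\"older.

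Integrating the resulting differential inequality over $(0,t)$ and invoking the hypothesis \eqref{4.29} in the form $|v_\varphi|_\infty\leq |v_\varphi|_{s,\infty}/c_0$ converts each $|v_\varphi|_\infty^{s-4+2\delta}$ factor into $|v_\varphi|_{s,\infty}^{s-4+2\delta}/c_0^{s-4+2\delta}$; after a Young-type absorption step one divides by $|v_\varphi|_{s,\infty}^{s-4+2\delta}$, leaving $|v_\varphi|_{s,\infty}^{4-2\delta}\lec D_{11}^{4-2\delta}X+D_{12}^{4-2\delta}$, which is \eqref{4.31}. The principal obstacle is the delicate bookkeeping required to produce the exponent $s-4+2\delta$ on $|v_\varphi|_\infty$ exactly (rather than a larger exponent that a naive Cauchy--Schwarz split would give), so that \eqref{4.29} and the subsequent absorption leave the correct final power $1/(4-2\delta)$ of $X$; this relies on the precise combination of Hardy's inequality \eqref{2.13}, the swirl maximum principle \eqref{2.9}, and weighted interpolation, together with Young absorption against the coercive terms on the LHS.
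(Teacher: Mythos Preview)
Your outline for \eqref{4.46} applies the Gagliardo--Nirenberg interpolation to the wrong function. Interpolating $v_\varphi$ requires pointwise-in-time control of $\|v_\varphi(t)\|_{H^2(\Omega)}$, but Lemma~\ref{l6.1} only bounds $|\nabla\omega_r|_{2,\Omega^t}$, $|\nabla\omega_z|_{2,\Omega^t}$ in space--time $L^2$ (its right-hand side moreover contains $|v_\varphi|_{\infty,\Omega^t}^{2\delta}$, so using it here would be circular). The paper instead carries out the $L^s$-testing of $\eqref{nse_cylin}_2$ and passes $s\to\infty$ \emph{first}, obtaining an inequality $|v_\varphi(t)|_\infty^2-|v_\varphi(0)|_\infty^2\lec\nu^{-1}\int_0^t|\psi_{1,z}|_\infty^2+D_{10}^2$, and only \emph{then} interpolates---but on $\psi_{1,z}$: $|\psi_{1,z}|_\infty\lec|\psi_{1,z}|_2^{1/4}|D^2\psi_{1,z}|_2^{3/4}+|\psi_{1,z}|_2$. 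This works because $|\psi_{1,z}|_{2,\Omega^t}\lec D_1$ by \eqref{2.15} and $|D^2\psi_{1,z}|_{2,\Omega^t}\lec|\Gamma_{,z}|_{2,\Omega^t}\leq X$ by Lemma~\ref{l3.2}; H\"older in time then produces exactly $D_1^{1/2}X^{3/2}/\nu$.

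For \eqref{4.31} your overall scheme (test with $v_\varphi|v_\varphi|^{s-2}$, isolate $|v_\varphi|_\infty^{s-4+2\delta}$, invoke \eqref{4.29}) is right, but the Cauchy--Schwarz splitting you describe does not deliver that exponent. After factoring out $|v_\varphi|_\infty^{s-4+2\delta}$ and applying Cauchy--Schwarz to $(\psi_{1,z}/r^{1-\delta})(r^{1-\delta}|v_\varphi|^{4-2\delta})$, the second factor becomes $\bigl(\int r^{2-2\delta}|v_\varphi|^{8-4\delta}\bigr)^{1/2}\leq D_2^{1-\delta}|v_\varphi|_{6-2\delta}^{3-\delta}$, and any interpolation of $|v_\varphi|_{6-2\delta}$ against $L^2$ reintroduces $|v_\varphi|_\infty^{2-\delta}$, so the total exponent on $|v_\varphi|_\infty$ becomes $s-2+\delta$ rather than $s-4+2\delta$; after dividing, this yields only $|v_\varphi|_{s,\infty}\lec X^{1/(2-\delta)}$, not $X^{1/(4-2\delta)}$. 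The device the paper uses is different: apply Young's inequality \emph{before} any factoring, writing $\int\psi_{1,z}|v_\varphi|^s\leq\nu\int|v_\varphi|^s/r^2+(4\nu)^{-1}\int r^2\psi_{1,z}^2|v_\varphi|^s$. The first piece is absorbed by the coercive term; in the second, the \emph{square} $\psi_{1,z}^2$ together with the $r^2$ weight is the point---one writes $r^2|v_\varphi|^{4-2\delta}=r^{-(2-2\delta)}(r|v_\varphi|)^{4-2\delta}\leq D_2^{4-2\delta}r^{-(2-2\delta)}$, leaving $\tfrac{D_2^{4-2\delta}}{4\nu}\int(\psi_{1,z}^2/r^{2-2\delta})|v_\varphi|^{s-4+2\delta}$ with no residual $v_\varphi$ factors. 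Then $|v_\varphi|_\infty^{s-4+2\delta}$ factors out cleanly, \eqref{4.29} converts it to $|v_\varphi|_{s,\infty}^{s-4+2\delta}/c_0^{s-4+2\delta}$, and the remaining integral $\int_{\Omega^t}\psi_{1,z}^2/r^{2-2\delta}$ is bounded (via Hardy and an interpolation of $|\psi_{1,rz}|_2$ between \eqref{2.15} and Lemma~\ref{l3.2}) by $(R^{2\delta}/\delta^2)D_1X$, yielding the claimed power $1/(4-2\delta)$.
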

\begin{proof}
We multiply  the equation $(\ref{nse_cylin})_2$ for $v_\varphi$ by $v_\varphi|v_\varphi|^{s-2}$, integrate over $\Omega$ and use the fact that ${v_r\over r}=-\psi_{1,z}$ to  obtain
\eqnb\label{4.32}
\begin{split}
{1\over s}\frac{\d }{\d t} |v_\varphi|_{s,\Omega}^s&+{4\nu(s-1)\over s^2}|\nabla|v_\varphi|^{s/2}|_{2,\Omega}^2+\nu\intop_\Omega{|v_\varphi|^s\over r^2}=\intop_\Omega\psi_{1,z}|v_\varphi|^s+\intop_\Omega f_\varphi v_\varphi|v_\varphi|^{s-2}
\end{split}
\eqne
We estimate the first term on the right-hand side by $\nu \int_\Omega |v_\varphi |^s/r^2 + (4\nu)^{-1} \int_\Omega r^2 |\psi_{1,z} |^2 | v_\varphi |^s$, and thus
\[
{1\over s}\frac{\d }{\d t} |v_\varphi|_{s,\Omega}^s\lec \frac{1}{\nu} \intop_\Omega r^2 |\psi_{1,z} |^2 | v_\varphi |^s + \intop_\Omega r\left| \frac{f_\varphi}r \right| |v_\varphi|^{s-1} \leq \frac{D_2^2}{\nu} |\psi_{1,z} |_{s,\Omega}^2 | v_\varphi |_{s,\Omega}^{s-2} + D_2 \left| \frac{f_\varphi}r \right|_{\frac{s}2,\Omega} |v_\varphi|^{s-2}_{s,\Omega} 
\]
where we used the maximum principle \eqref{2.9} in the second inequality. Thus, we can divide by $|v_\varphi|_{s,\Omega}^{s-2}$ and integrate in time to obtain
\[
|v_{\varphi } (t) |_{s,\Omega}^2 - |v_{\varphi } (0) |_{s,\Omega}^2 \lec \frac{1}{\nu } \int_0^t |\psi_{1,z} |_{s,\Omega}^2 + D_{10}^2
\]
for every $t\in (0,T)$. Taking $s\to \infty$ and using the interpolation inequality \eqref{2.17} in the form $|g|_{\infty,\Omega} \lec | g |_{2,\Omega}^{1/4}|D^2 g |_{2,\Omega}^{3/4} + | g |_{2,\Omega}$ we obtain 
\[
|v_{\varphi } (t) |_{\infty,\Omega}^2 \lec |v_{\varphi } (0) |_{\infty,\Omega}^2+ \frac{1}{\nu } \int_0^t \left( |\psi_{1,z} |_{2,\Omega}^{1/2}|D^2 \psi_{1,z} |_{2,\Omega}^{3/2} + |\psi_{1,z} |_{2,\Omega}^2 \right) + D_{10}^2\lec \frac{{D_1}^{1/2}}{\nu } |\Gamma_{,z}|_{2,\Omega^t}^{3/2} + D_{10}^2,
\]
from which \eqref{4.46} follows.

As for \eqref{4.31}, we multiply  the equation $(\ref{nse_cylin})_2$ for $v_\varphi$ by $v_\varphi|v_\varphi|^{s-2}$, integrate over $\Omega$ and use the fact that ${v_r\over r}=-\psi_{1,z}$ to  obtain
\eqnb\label{4.32}
\begin{split}
{1\over s}\frac{\d }{\d t} |v_\varphi|_{s,\Omega}^s&+{4\nu(s-1)\over s^2}|\nabla|v_\varphi|^{s/2}|_{2,\Omega}^2+\nu\intop_\Omega{|v_\varphi|^s\over r^2}=\intop_\Omega\psi_{1,z}|v_\varphi|^s+\intop_\Omega f_\varphi v_\varphi|v_\varphi|^{s-2}
\end{split}
\eqne
We estimate the first term on the right-hand side by
\[\begin{split}
& \nu \int_\Omega |v_\varphi |^s/r^2 + (4\nu)^{-1} \int_\Omega r^2 |\psi_{1,z} |^2 | v_\varphi |^s\\
&\leq \nu \int_\Omega |v_\varphi |^s/r^2 + \frac{D_2^{4-2\delta }}{4\nu} \int_\Omega \frac{\psi_{1,z}^2}{r^{2-2\delta }}  | v_\varphi |^{s-4+2\delta }, 
\end{split} \]
where $\delta >0$ is small, and we used the maximum principle \eqref{2.9}. Since also
\[
\int_\Omega |f_\varphi |\, |v_\varphi |^{s-1} \leq | v_\varphi |_{\infty, \Omega }^{s-4+2\delta } \int_\Omega | f_\varphi |\, |v_\varphi |^{3-2\delta }
\]
we thus have
\[
{1\over s}\frac{\d }{\d t} |v_\varphi|_{s,\Omega}^s\lec \frac{D_2^{4-2\delta}}{4\nu } | v_\varphi |_{\infty , \Omega }^{s-4+2\delta } \int_\Omega \frac{\psi_{1,z}^2}{r^{2-2\delta }} +| v_\varphi |_{\infty, \Omega }^{s-4+2\delta } \int_\Omega | f_\varphi |\, |v_\varphi |^{3-2\delta }
\]
Noting that
\[
\frac1s \frac{\d }{\d t} |v_\varphi |_{s,\Omega }^s = |v_\varphi |_{s,\Omega }^{s-1} \frac{\d }{\d t} |v_\varphi |_{s,\Omega } = |v_\varphi |_{s,\Omega }^{s-4+2\delta } |v_\varphi |_{s,\Omega }^{3-2\delta } \frac{\d }{\d t } |v_\varphi |_{s,\Omega } = | v_\varphi |^{s-4+2\delta } \frac{1}{4-2\delta } \frac{\d }{\d t} |v_\varphi |_{s,\Omega }^{4-2\delta }
\]
Thus, making use of the assumption \eqref{4.29} we obtain that
\[
\frac{1}{4-2\delta } \frac{\d }{\d t} |v_\varphi |_{s,\Omega }^{4-2\delta } \leq c_0^{-s+4-2\delta } \left( \frac{D_2^{4-2\delta }}{4\nu } \int_\Omega \frac{\psi_{1,z}^2}{r^{2-2\delta }} + \int_\Omega |f_\varphi |\, |v_\varphi |^{3-2\delta } \right) 
\]
Integrating in time we obtain
\eqnb\label{estaax}
\begin{split}
|v_\varphi (t) |_{s,\Omega }^{4-2\delta } &\leq \frac{4-2\delta }{c_0^{s-4+2\delta }} \left( \frac{D_2^{4-2\delta }}{4\nu} \int_{\Omega^t} \frac{\psi_{1,z}^2}{r^{2-2\delta} } + \int_{\Omega^t } |f_\varphi|\, | v_\varphi |^{3-2\delta } \right) \\
&\leq \frac{4-2\delta }{c_0^{s-4+2\delta }} \left( \frac{D_2^{4-2\delta }}{4\nu} \int_{\Omega^t} \frac{\psi_{1,z}^2}{r^{2-2\delta }} + | f_\varphi |_{10/(1+6\delta ), \Omega^t  } | v_\varphi |_{10/3,\Omega^t }^{3-2\delta } \right)   ,
\end{split}
\eqne
where we also used H\"older's inequality in the second line.  Note that $| v_\varphi |_{10/3,\Omega^t } \leq D_1$ and
\[
\int_{\Omega^t} \frac{\psi_{1,z}^2}{r^{2-2\delta }} \lec \frac{ R^{2\delta } }{\delta^2 }  |\psi_{1,zr} |^2_{2,\Omega^t} \leq  \frac{ R^{2\delta } }{\delta^2 }  D_1 X(t),
\]
where we used the Hardy inequality \eqref{2.13} in the first inequality and \eqref{psi1_h2} in the second inequality. Thus, taking the $1/(4-2\delta )$ power of \eqref{estaax} we obtain
\[
|v_\varphi (t)|_{s,\Omega } \leq D_{11} X^{\frac{1}{4-2\delta }} + D_{12},
\]
as required. Recall Section~\ref{s1} for the definitions of $D_{11}$, $D_{12}$.
\end{proof}

\section{Global estimate for regular solutions}\label{sec_reg}

Assuming appropriate regularity of data we show that boundedness of $X(t)$ implies that 
\eqnb\label{7.1}
\|v\|_{W_2^{4,2}(\Omega^t)}+\|\nabla p\|_{W_2^{2,1}(\Omega^t)} \leq C,
\eqne 
where $C>0$ depends only on initial data and the forcing.

\subsection{Preliminaries}
We first introduce some functional analytic tools to handle the anisotropic Sobolev spaces. 

\begin{definition}[Anisotropic Sobolev and Sobolev-Slobodetskii spaces]\label{d2.11}
We denote by
\begin{itemize}
\item[1.] $W_{p,p_0}^{k,k/2}(\Omega^T)$, $k,k/2\in\N\cup\{0\}$, $p,p_0\in[1,\infty]$ -- the anisotropic Sobolev space with a mixed norm, which is a completion of $C^\infty(\Omega^T)$-functions under the norm
$$
\|u\|_{W_{p,p_0}^{k,k/2}(\Omega^T)}=\left(\int_0^T\left(\sum_{|\alpha|+2\alpha\le k}\int_\Omega |D_x^\alpha\partial_t^au|^p\right)^{p_0/p}\d t\right)^{1/p_0}.
$$
\item[2.] $W_{p,p_0}^{s,s/2}(\Omega^T)$, $s\in\R_+$, $p,p_0\in[1,\infty)$ -- the Sobolev-Slobodetskii space with the finite norm
$$\eqal{
&\|u\|_{W_{p,p_0}^{s,s/2}(\Omega^T)}=\sum_{|\alpha|+2a\le[s]}\|D_x^\alpha\partial_t^au\|_{L_{p,p_0}(\Omega^T)}\cr
&\quad+\bigg[\intop_0^T\!\!\bigg(\intop_\Omega\!\intop_\Omega\!\sum_{|\alpha|+2a= [s]}\hskip-10pt{|D_x^\alpha\partial_t^au(x,t)-D_{x'}^\alpha\partial_t^au(x',t)|^p\over |x-x'|^{n+p(s-[s])}}\d x\d x'\bigg)^{p_0/p}\hskip-5pt \d t\bigg]^{1/p_0}\cr
&\quad+\bigg[\intop_\Omega\!\!\bigg(\intop_0^T\!\intop_0^T\!\sum_{|\alpha|+2a=[s]}\hskip-10pt {|D_x^\alpha\partial_t^au(x,t)-D_x^\alpha\partial_{t'}^au(x,t')|^{p_0}\over|t-t'|^{1+p_0({s\over 2}-[{s\over 2}])}}\d t\d t'\bigg)^{p/p_0}\hskip-5pt \d x\bigg]^{1/p},\cr}
$$
where $a\in\N\cup\{0\}$, $[s]$ is the integer part of $s$ and $D_x^\alpha$ denotes the partial derivative in the spatial variable $x$ corresponding to multiindex $\alpha$. For $s$ odd the last but one term in the above norm vanishes whereas for $s$ even the last two terms vanish. We also use notation $L_p(\Omega^T)=L_{p,p}(\Omega^T)$, $W_p^{s,s/2}(\Omega^T)=W_{p,p}^{s,s/2}(\Omega^T)$.
\item[3.] $B_{p,p_0}^l(\Omega)$, $l\in\R_+$, $p,p_0\in[1,\infty)$ -- the Besov space with the finite norm
$$
\|u\|_{B_{p,p_0}^l(\Omega)}=\|u\|_{L_p(\Omega)}+\bigg(\sum_{i=1}^n\intop_0^\infty {\|\Delta_i^m(h,\Omega)\partial_{x_i}^ku\|_{L_p(\Omega)}^{p_0}\over h^{1+(l-k)p_0}}\d h\bigg)^{1/p_0},
$$
where $k\in\N\cup\{0\}$, $m\in\N$, $m>l-k>0$, $\Delta_i^j(h,\Omega)u$, $j\in\N$, $h\in\R_+$ is the finite difference of the order $j$ of the function $u(x)$ with respect to $x_i$ with
$$\eqal{
&\Delta_i^1(h,\Omega)u=\Delta_i(h,\Omega)\cr
&=u(x_1,\dots,x_{i-1},x_i+h,x_{i+1},\dots,x_n)-u(x_1,\dots,x_n),\cr
&\Delta_i^j(h,\Omega)=\Delta_i(h,\Omega)\Delta_i^{j-1}(h,\Omega)u\quad {\rm and}\quad \Delta_i^j(h,\Omega)u=0\cr
&{\rm for}\quad x+jh\not\in\Omega.\cr}
$$
It was proved in \cite{G} that the norms of the Besov space $B_{p,p_0}^l(\Omega)$ are equivalent for different $m$ and $k$ satisfying the condition $m>l-k>0$.
\end{itemize}
\end{definition}

We need the following interpolation lemma.

\begin{lemma}(Anisotropic interpolation, see\cite[Ch. 4, Sect. 18]{BIN})\label{l2.12}
Let $u\in W_{p,p_0}^{s,s/2}(\Omega^T)$, $s\in\R_+$, $p,p_0\in[1,\infty]$, $\Omega\subset\R^3$. Let $\sigma\in\R_+\cup\{0\}$, and
$$
\varkappa={3\over p}+{2\over p_0}-{3\over q}-{2\over q_0}+|\alpha|+2a+\sigma<s.
$$
Then $D_x^\alpha\partial_t^au\in W_{q,q_0}^{\sigma,\sigma/2}(\Omega^T)$, $q\ge p$, $q_0\ge p_0$ and there exists $\varepsilon\in(0,1)$ such that
$$
\|D_x^\alpha\partial_t^au\|_{W_{q,q_0}^{\sigma,\sigma/2}(\Omega^T)}\le\varepsilon^{s-\varkappa} \|u\|_{W_{p,p_0}^{s,s/2}(\Omega^t)}+c\varepsilon^{-\varkappa}\|u\|_{L_{p,p_0}(\Omega^t)}.
$$
We recall from \cite{B} the trace and the inverse trace theorems for Sobolev spaces with a mixed norm.
\end{lemma}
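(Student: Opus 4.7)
The plan is to reduce to a full-space problem via extension and then prove the inequality on $\R^3 \times \R$ by a dyadic decomposition adapted to the parabolic scaling $t \sim |x|^2$, optimizing in a splitting parameter to extract the stated $\varepsilon$-dependence.

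First, I would use a bounded linear extension operator $E\colon W_{p,p_0}^{s,s/2}(\Omega^T) \to W_{p,p_0}^{s,s/2}(\R^3 \times \R)$. Such an operator exists for the cylinder $\Omega$ by successive reflection in $r$, $z$, and $t$ followed by a smooth cutoff, and its operator norm depends only on $s$, $p$, $p_0$, and $\Omega$. Proving the claimed inequality for $Eu$ on $\R^3\times\R$ and then restricting to $\Omega^T$ gives the inequality for $u$ with a constant inflated by $\|E\|$.

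On the full space I decompose $Eu = \sum_{j \ge 0} u_j$, where $u_j$ is the Littlewood--Paley block localized to the parabolic frequency annulus $\{2^j \le |\xi| + |\tau|^{1/2} < 2^{j+1}\}$. A parabolic Bernstein inequality, obtained from Young's convolution inequality and the anisotropic scaling of the convolution kernel defining $u_j$, yields
\[
\|D_x^\alpha \partial_t^a u_j\|_{W^{\sigma,\sigma/2}_{q,q_0}(\R^3\times\R)} \lec 2^{j(|\alpha| + 2a + \sigma + \varkappa_0)}\|u_j\|_{L_{p,p_0}(\R^3\times\R)} = 2^{j\varkappa}\|u_j\|_{L_{p,p_0}(\R^3\times\R)},
\]
where $\varkappa_0 := \tfrac{3}{p}-\tfrac{3}{q}+\tfrac{2}{p_0}-\tfrac{2}{q_0}$ is the embedding loss $L_{p,p_0}\hookrightarrow L_{q,q_0}$ and $\varkappa$ is precisely the exponent from the statement. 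Combining this block-wise bound with the Littlewood--Paley characterizations
\[
\|u\|_{L_{p,p_0}} \sim \sup_{j}\|u_j\|_{L_{p,p_0}},\qquad \|u\|_{W^{s,s/2}_{p,p_0}} \sim \sup_{j}2^{js}\|u_j\|_{L_{p,p_0}}
\]
(valid in the Besov-type range), splitting the sum at level $N$, and using $s > \varkappa$ to sum the high-frequency geometric series, I obtain
\[
\|D_x^\alpha \partial_t^a u\|_{W^{\sigma,\sigma/2}_{q,q_0}} \lec 2^{N\varkappa}\|u\|_{L_{p,p_0}} + 2^{-N(s-\varkappa)}\|u\|_{W^{s,s/2}_{p,p_0}}.
\]
Setting $\varepsilon := 2^{-N} \in (0,1)$ yields the factors $\varepsilon^{-\varkappa}$ and $\varepsilon^{s-\varkappa}$ as stated.

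The main obstacle is validating the Littlewood--Paley characterizations in the mixed-norm setting across the full range $p, p_0 \in [1,\infty]$ allowed by the lemma. For $p, p_0 \in (1,\infty)$ they follow from a Mikhlin multiplier theorem adapted to anisotropic $L_{p,p_0}$, but at the endpoints $p \in\{1,\infty\}$ or $p_0 \in\{1,\infty\}$ the usual $\ell^2$ square-function characterization fails and one must instead rely on the Besov-type definitions of $W^{s,s/2}_{p,p_0}$ and $B^{\sigma}_{p,p_0}$ in terms of finite differences (cf.\ Definition~\ref{d2.11}). Rephrasing the dyadic argument through such differences $\Delta_i^m(h,\Omega)u$ requires careful bookkeeping of the anisotropic moduli of continuity on scales $h \le \varepsilon$ in space and $\le \varepsilon^2$ in time, but the structure of the optimization (low frequencies bounded by the $L_{p,p_0}$ norm with weight $\varepsilon^{-\varkappa}$, high frequencies by the full $W^{s,s/2}_{p,p_0}$ norm with weight $\varepsilon^{s-\varkappa}$) is preserved.
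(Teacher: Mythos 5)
The paper does not prove this lemma at all: it is quoted verbatim from Besov--Il'in--Nikolskii \cite[Ch.~4, Sect.~18]{BIN} and used as a black box, so there is no internal proof to compare against. Your proposal is therefore a reconstruction of the classical result, and its skeleton is sound: the scaling bookkeeping is right (the parabolic Bernstein loss $\varkappa_0=\tfrac3p-\tfrac3q+\tfrac2{p_0}-\tfrac2{q_0}$ plus $|\alpha|+2a+\sigma$ reproduces exactly the exponent $\varkappa$, and note that $q\ge p$, $q_0\ge p_0$ force $\varkappa\ge0$, so the low-frequency geometric sum always closes), and splitting the dyadic sum at $N$ with $\varepsilon=2^{-N}$ gives precisely the two weights $\varepsilon^{-\varkappa}$ and $\varepsilon^{s-\varkappa}$. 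The route is genuinely different from the source: BIN work through integral representations, i.e.\ a single mollification at scale $\varepsilon$ whose smooth part is controlled by $\|u\|_{L_{p,p_0}}$ times $\varepsilon^{-\varkappa}$ and whose remainder is controlled by the top-order seminorm times $\varepsilon^{s-\varkappa}$; your dyadic decomposition is the Fourier-analytic rewriting of the same idea, and it buys a cleaner separation of the two regimes at the cost of the multiplier/endpoint issues you flag.

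Two points of imprecision are worth fixing. First, the displayed ``characterizations'' $\|u\|_{L_{p,p_0}}\sim\sup_j\|u_j\|_{L_{p,p_0}}$ and $\|u\|_{W^{s,s/2}_{p,p_0}}\sim\sup_j2^{js}\|u_j\|_{L_{p,p_0}}$ are false as equivalences: the right-hand sides are the $B^0_{p_0,\infty}$- and $B^s_{p_0,\infty}$-type norms, which are strictly weaker. Fortunately your argument only uses the one-sided bounds $\sup_j\|u_j\|\lec\|u\|_{L_{p,p_0}}$ and $\sup_j2^{js}\|u_j\|\lec\|u\|_{W^{s,s/2}_{p,p_0}}$ (together with the triangle inequality on the left-hand side), and these hold for all $p,p_0\in[1,\infty]$ by mixed-norm Young's inequality, so you should state them in that one-sided form rather than as equivalences. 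Second, the endpoint discussion ($p$ or $p_0$ equal to $1$ or $\infty$, and non-integer $\sigma$ measured through finite differences as in Definition~\ref{d2.11}) is acknowledged but not carried out; since the needed block-wise bounds $\|\Delta^m_i(h)u_j\|\lec\min\{1,(2^jh)^m\}\|u_j\|$ are elementary kernel estimates, this is a fillable gap rather than an obstruction, but as written the proof is a sketch at exactly the place where the mixed-norm setting departs from the standard $L^p$ theory.
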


\begin{lemma}\label{l2.13}
(traces in $W_{p,p_0}^{s,s/2}(\Omega^T)$, see \cite{B})
\begin{itemize}
\item[(i)] Let $u\in W_{p,p_0}^{s,s/2}(\Omega^t)$, $s\in\R_+$, $p,p_0\in(1,\infty)$. Then $u(x,t_0)=u(x,t)|_{t=t_0}\!$ for $t_0\in[0,T]$ belongs to $B_{p,p_0}^{s-2/p_0}(\Omega)$, and
$$
\|u(\cdot,t_0)\|_{B_{p,p_0}^{s-2/p_0}(\Omega)}\le c\|u\|_{W_{p,p_0}^{s,s/2}(\Omega^T)},
$$
where $c$ does not depend on $u$.
\item[(ii)] For given $\tilde u\in B_{p,p_0}^{s-2/p_0}(\Omega)$, $s\in\R_+$, $s>2/p_0$, $p,p_0\in(1,\infty)$, there exists a function $u\in W_{p,p_0}^{s,s/2}(\Omega^t)$ such that $u|_{t=t_0}=\tilde u$ for $t_0\in[0,T]$ and
$$
\|u\|_{W_{p,p_0}^{s,s/2}(\Omega^T)}\le c\|\tilde u\|_{B_{p,p_0}^{s-2/p_0}(\Omega)},
$$
where constant $c$ does not depend on $\tilde u$.
\end{itemize}
\end{lemma}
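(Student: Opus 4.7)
The plan is to follow the classical route (essentially that of Bugrov \cite{B}), namely first localize/flatten, then carry out the trace/extension estimate on the half-space by Fourier analysis, with the exponent $s - 2/p_0$ arising from the parabolic scaling (where one time derivative counts as two spatial derivatives). Since $\partial\Omega$ is smooth, a partition of unity and standard local coordinate straightening reduces matters to the case $\Omega = \mathbb{R}^3_+$ or $\mathbb{R}^3$ and $(0,T) = \mathbb{R}$; a Whitney-type extension in $t$ (reflecting across $t = t_0$) handles the time interval. Once on the full space, the anisotropic Sobolev-Slobodetskii norm is equivalent, via Fourier multipliers, to
\[
\|u\|_{W_{p,p_0}^{s,s/2}} \;\simeq\; \bigl\| \mathcal{F}^{-1}\bigl[(1 + |\xi|^2 + |\tau|)^{s/2}\hat u\,\bigr] \bigr\|_{L_{p,p_0}},
\]
and the Besov norm $B_{p,p_0}^{s - 2/p_0}(\mathbb{R}^3)$ admits a parallel Littlewood-Paley characterization with spatial weight $(1+|\xi|^2)^{(s-2/p_0)/2}$.

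For part (i), given $u \in W_{p,p_0}^{s,s/2}$, I would write
\[
u(\cdot, t_0) = \int_{\mathbb{R}} \chi(t_0 - t)\, \partial_t u(\cdot, t)\, dt + \text{l.o.t.}
\]
for a suitable cutoff $\chi$, and pass this through the Fourier transform. Testing the spatial Besov seminorm via the dyadic characterization and applying the mixed-norm Minkowski and Hardy-Littlewood-Sobolev inequalities in the $t$-variable produces the $2/p_0$ loss in regularity: morally one gains one time derivative and the Sobolev embedding in the $t$-direction with exponent $p_0$ costs $1/p_0$, doubled by parabolic scaling. This gives the trace estimate with a constant independent of $u$ and $t_0$.

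For part (ii), the extension direction, I would build a parametrix tied to the anisotropic symbol. Concretely, set
\[
u(x,t) \coloneqq \bigl( e^{-|t-t_0| (I-\Delta)^{1/2}}\,\tilde u\bigr)(x)\, \eta(t),
\]
where $\eta\in C_c^\infty(\mathbb{R})$ equals $1$ near $t_0$, so that $u(\cdot, t_0) = \tilde u$. Alternatively, one can convolve $\tilde u$ against the Poisson-type kernel associated with the anisotropic symbol $(1 + |\xi|^2 + |\tau|)^{-s/2}$, exactly as in the construction of retractions between Besov and Sobolev-Slobodetskii scales in Besov-Il'in-Nikol'skii \cite{BIN}. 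The $W_{p,p_0}^{s,s/2}$ norm of this extension is then controlled by Mikhlin-type multiplier bounds, and one verifies that the Besov norm of $\tilde u$ reproduces the claimed $\|u\|_{W_{p,p_0}^{s,s/2}}$ bound.

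The main technical obstacle is the extension estimate (ii) when $s - 2/p_0$ is non-integer and $p \neq p_0$: ensuring that the fractional-order spatial norms match cleanly under the mixed $L_{p,p_0}$ integration requires careful Mikhlin-H\"ormander multiplier estimates in the anisotropic class, together with a Hardy-type inequality in $t$ to absorb boundary contributions from $\eta$. Once these multiplier bounds are in place (as carried out in \cite{B}, \cite{BIN}), both inequalities follow.
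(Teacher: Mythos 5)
The paper does not prove this lemma at all: it is quoted verbatim from Bugrov \cite{B} as a known trace/inverse-trace theorem for mixed-norm anisotropic spaces, so there is no internal proof to compare against. Your outline follows the standard route (localization, parabolic scaling heuristic for the $2/p_0$ loss, semigroup extension), which is the right general strategy, but two of its concrete ingredients are wrong as stated. First, the claimed equivalence $\|u\|_{W_{p,p_0}^{s,s/2}}\simeq\|\mathcal{F}^{-1}[(1+|\xi|^2+|\tau|)^{s/2}\hat u]\|_{L_{p,p_0}}$ fails for non-integer $s$ and $p\neq 2$: the Slobodetskii space defined by the double-integral seminorms is of Besov type ($B^s_{p,p}$ in each variable), not of Bessel-potential type, and these coincide only for $p=2$ or integer $s$. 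Since the lemma is asserted for all $s\in\R_+$, the Fourier-multiplier reduction cannot be used as written; one must work with the dyadic (Littlewood--Paley/Besov) characterization throughout, and it is precisely this bookkeeping that forces the second index of the target space to be $p_0$ in $B^{s-2/p_0}_{p,p_0}(\Omega)$ --- a point your sketch never addresses.

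Second, the extension operator in part (ii) is scaled incorrectly. With $u(x,t)=e^{-|t-t_0|(I-\Delta)^{1/2}}\tilde u$, one time derivative costs one spatial derivative, so the extension respects isotropic rather than parabolic scaling. A direct check of the top-order spatial derivatives shows the failure: $\|D_x^{s}e^{-|t-t_0|(I-\Delta)^{1/2}}\tilde u\|_{L_p}\sim |t-t_0|^{-(s-\sigma)}\|\tilde u\|_{B^\sigma_{p,\cdot}}$, which is $L_{p_0}$ near $t_0$ only if $\sigma>s-1/p_0$, whereas the hypothesis gives only $\sigma=s-2/p_0$. The correct choice is the heat-type semigroup $e^{-|t-t_0|(I-\Delta)}$, for which the analogous bound carries the exponent $-(s-\sigma)/2$ and the condition becomes the borderline $\sigma\ge s-2/p_0$, with the endpoint absorbed exactly by the summability index $p_0$ of the Besov space. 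As it stands, your construction produces a function that does not lie in $W^{s,s/2}_{p,p_0}(\Omega^T)$, so part (ii) is not established; repairing it requires replacing the Poisson kernel by the heat kernel and running the endpoint Besov estimate, as is done in \cite{B}.
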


We need the following imbeddings between Besov spaces

\begin{lemma}(see \cite[Th. 4.6.1]{T})\label{l2.14}
|Let $\Omega\subset\R^n$ be an arbitrary domain.
\begin{itemize}
\item[(a)] Let $s\in\R_+$, $\varepsilon>0$, $p\in(1,\infty)$, and $1\le q_1\le q_2\le\infty$. Then
$$
B_{p,1}^{s+\varepsilon}(\Omega)\subset B_{p,\infty}^{s+\varepsilon}(\Omega)\subset B_{p,q_1}^s(\Omega)\subset B_{p,q_2}^s(\Omega)\subset B_{p,1}^{s-\varepsilon}(\Omega)\subset B_{p,\infty }^{s-\varepsilon}(\Omega).
$$
\item[(b)] Let $\infty>q\ge p>1$, $1\le r\le\infty$, $0\le t\le s<\infty$ and
$$
t+{n\over p}-{n\over q}\le s.
$$
Then $B_{p,r}^s(\Omega)\subset B_{q,r}^t(\Omega)$.
\end{itemize}
\end{lemma}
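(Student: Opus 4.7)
The plan is to prove both parts using the characterization of Besov spaces via dyadic Littlewood--Paley decompositions, combined with a bounded extension operator from $\Omega$ to $\R^n$ for the domain case. Recall that for a Schwartz partition of unity $\{\varphi_j\}_{j\ge 0}$ on $\R^n$ with $\mathrm{supp}\,\varphi_j \subset \{|\xi|\sim 2^j\}$ for $j\ge 1$ and $\varphi_0$ supported in a ball, one has the equivalent norm
$$
\|f\|_{B_{p,q}^s(\R^n)} \approx \bigg(\sum_{j\ge 0} 2^{jsq}\|\Delta_j f\|_{L_p(\R^n)}^q\bigg)^{1/q},
$$
where $\Delta_j f = \mathcal{F}^{-1}(\varphi_j \hat f)$; this is equivalent to the difference-based norm in Definition~\ref{d2.11} (Besov's equivalence theorem). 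For spaces on $\Omega$, I would invoke Stein's (or Rychkov's universal) extension operator $E : B_{p,q}^s(\Omega) \to B_{p,q}^s(\R^n)$, bounded in the stated ranges and satisfying $(Ef)|_\Omega = f$, so that any embedding on $\R^n$ transfers to $\Omega$ by composition with $E$ followed by restriction.

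For part (a), after passing to $\R^n$ via $E$, every inclusion reduces to an inclusion of weighted sequence spaces applied to the dyadic blocks. The middle inclusion $B_{p,q_1}^s \subset B_{p,q_2}^s$ for $q_1 \le q_2$ is immediate from $\ell^{q_1}\hookrightarrow \ell^{q_2}$ applied to $(2^{js}\|\Delta_j f\|_{L_p})_{j\ge 0}$. For the outer inclusions, if $f\in B_{p,\infty}^{s+\varepsilon}$ then $2^{js}\|\Delta_j f\|_{L_p} \le C\, 2^{-j\varepsilon}$, which lies in every $\ell^q$ and in particular in $\ell^{q_1}$. Conversely, if $f\in B_{p,q_2}^s$, I would write $2^{j(s-\varepsilon)}\|\Delta_j f\|_{L_p} = 2^{-j\varepsilon}\cdot 2^{js}\|\Delta_j f\|_{L_p}$ and apply H\"older's inequality between $\ell^{q_2'}$ (for the geometric factor $2^{-j\varepsilon}$) and $\ell^{q_2}$, yielding summability in $\ell^1$, i.e.\ the embedding into $B_{p,1}^{s-\varepsilon}$.

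For part (b), the main ingredient is Bernstein's inequality: if $g\in L_p(\R^n)$ has Fourier support in $\{|\xi|\le 2^{j+1}\}$, then for $q\ge p$
$$
\|g\|_{L_q(\R^n)} \lec 2^{jn(1/p - 1/q)}\|g\|_{L_p(\R^n)}.
$$
Applied to $g = \Delta_j (Ef)$, this yields $\|\Delta_j f\|_{L_q} \lec 2^{jn(1/p - 1/q)}\|\Delta_j f\|_{L_p}$, so that under the hypothesis $t + n/p - n/q \le s$
$$
2^{jt}\|\Delta_j f\|_{L_q} \lec 2^{j(t + n/p - n/q)}\|\Delta_j f\|_{L_p} \le 2^{js}\|\Delta_j f\|_{L_p}.
$$
Taking the $\ell^r$ norm in $j$ and restricting back to $\Omega$ produces the desired bound $\|f\|_{B_{q,r}^t(\Omega)} \lec \|f\|_{B_{p,r}^s(\Omega)}$.

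The main obstacle is not the Littlewood--Paley manipulations themselves, which are elementary, but rather the two classical ``preliminary'' facts needed to run them on a domain: the equivalence between the intrinsic difference-based norm of Definition~\ref{d2.11} and the Littlewood--Paley norm, and the construction of a bounded extension $E$ from $\Omega$ to $\R^n$ uniformly in $s$, $p$, $q$ in the stated ranges. Both are classical (Besov's equivalence theorem and Rychkov's intrinsic extension for Lipschitz domains, of which a finite cylinder is an instance), so in practice one simply quotes \cite{T} as the author does; if one insisted on self-containment, verifying these two equivalences on the specific cylindrical domain $\Omega$ would occupy most of the work, while the embeddings (a) and (b) themselves follow in a few lines from Bernstein plus $\ell^q$ monotonicity.
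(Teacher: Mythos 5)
The paper offers no proof of this lemma at all: it is quoted verbatim from Triebel \cite[Th.~4.6.1]{T} (with the equivalence of the difference-based norms for different $m,k$ delegated to \cite{G}), so there is nothing to compare against except the citation. Your Littlewood--Paley argument is the standard proof of these embeddings and its core computations are correct: monotonicity of $\ell^q$ for the middle inclusion, the geometric factor $2^{-j\varepsilon}$ plus H\"older for the outer inclusions in (a), and Bernstein's inequality $\|\Delta_j f\|_{L_q}\lec 2^{jn(1/p-1/q)}\|\Delta_j f\|_{L_p}$ for (b). Two caveats are worth making explicit. First, the lemma as stated claims an \emph{arbitrary} domain, while your route through an extension operator and restriction only covers extension domains; you flag this, and for the cylinder (a Lipschitz domain) it is harmless, but note that part (a) can in fact be proved intrinsically on any domain from the paper's difference-based norm, using the almost-monotonicity of the modulus of smoothness ($\|\Delta_i^m(2h,\Omega)u\|_{L_p}\le 2^m\|\Delta_i^m(h,\Omega)u\|_{L_p}$) to discretize the $\d h/h$ integral into dyadic blocks, whereas part (b) is a genuine Sobolev-type embedding that does require some regularity of $\Omega$ (it fails on cusped domains), so the blanket phrase ``arbitrary domain'' in the statement should really be read against the definition by restriction or for the specific cylinder. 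Second, since Definition~\ref{d2.11} gives the Besov norm as an integral in $h$ rather than a dyadic sum, the step where you apply $\ell^{q_1}\hookrightarrow\ell^{q_2}$ implicitly uses that same discretization; the continuous analogue $L^{q_1}(\d h/h)\hookrightarrow L^{q_2}(\d h/h)$ is false without it. With those two points acknowledged, your sketch is a legitimate self-contained replacement for the citation.
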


\begin{lemma}(see \cite[Ch. 4, Th. 18.8]{BIN})\label{l2.15}
Let $1\le\theta_1<\theta_2\le\infty$. Then
$$
\|u\|_{B_{p,\theta_2}^l(\Omega)}\le c\|u\|_{B_{p,\theta_1}^l(\Omega)},
$$
where $c$ does not depend on $u$.
\end{lemma}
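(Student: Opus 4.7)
Since the $L_p(\Omega)$ term in $\|u\|_{B_{p,p_0}^l(\Omega)}$ does not depend on the third index $p_0$, it suffices to compare the two seminorms. These are, up to the outer sum over $i=1,\ldots,n$, $L^\theta$-quasi-norms on $((0,\infty),dh/h)$ of the function $G_i(h)\coloneqq h^{-(l-k)}\|\Delta_i^m(h,\Omega)\partial_{x_i}^k u\|_{L_p(\Omega)}$. Since this measure space has infinite total mass, the claim $\|G_i\|_{L^{\theta_2}(dh/h)}\le c\|G_i\|_{L^{\theta_1}(dh/h)}$ is not a general measure-theoretic inclusion but must exploit the structure of $G_i$ as a ratio of a finite-difference norm and a power of $h$.

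The plan is to pass to the monotone modulus of smoothness $\omega_m^i(h)\coloneqq \sup_{0<t\le h}\|\Delta_i^m(t,\Omega)\partial_{x_i}^k u\|_{L_p(\Omega)}$ and to $\widetilde G_i(h)\coloneqq h^{-(l-k)}\omega_m^i(h)$. Since $\omega_m^i$ is non-decreasing in $h$ and satisfies the Marchaud-type inequality $\omega_m^i(2h_0)\le 2^m\omega_m^i(h_0)$, one has the two-sided bound $2^{-(l-k)}\widetilde G_i(h_0)\le \widetilde G_i(h)\le 2^m\widetilde G_i(h_0)$ for all $h\in[h_0,2h_0]$. Integrating over this interval with respect to $dh/h$, whose total mass is $\log 2$, yields
\[
\widetilde G_i(h_0)^{\theta_1}\le \frac{2^{(l-k)\theta_1}}{\log 2}\int_{h_0}^{2h_0}\widetilde G_i(h)^{\theta_1}\,\frac{dh}{h}\le c\,\|\widetilde G_i\|_{L^{\theta_1}(dh/h)}^{\theta_1},
\]
and taking the supremum over $h_0>0$ gives $\|\widetilde G_i\|_{L^\infty(0,\infty)}\le c\,\|\widetilde G_i\|_{L^{\theta_1}(dh/h)}$. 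This already settles the case $\theta_2=\infty$; for $\theta_2<\infty$ I would combine it with the elementary bound
\[
\|\widetilde G_i\|_{L^{\theta_2}(dh/h)}^{\theta_2}=\int \widetilde G_i^{\theta_2-\theta_1}\widetilde G_i^{\theta_1}\frac{dh}{h}\le \|\widetilde G_i\|_{L^\infty}^{\theta_2-\theta_1}\|\widetilde G_i\|_{L^{\theta_1}(dh/h)}^{\theta_1},
\]
yielding $\|\widetilde G_i\|_{L^{\theta_2}(dh/h)}\le c\,\|\widetilde G_i\|_{L^{\theta_1}(dh/h)}$. Summing over $i$ and using the elementary monotonicity $\|a\|_{\ell^{\theta_2}}\le \|a\|_{\ell^{\theta_1}}$ for non-negative finite sequences then transfers the inequality to the full $\widetilde G$-based seminorm.

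Finally, I would return from $\widetilde G_i$ to $G_i$ via the classical equivalence $\|G_i\|_{L^\theta(dh/h)}\sim \|\widetilde G_i\|_{L^\theta(dh/h)}$ with constants depending only on $m$ and $l-k$; the nontrivial direction $\|\widetilde G_i\|_{L^\theta(dh/h)}\lec \|G_i\|_{L^\theta(dh/h)}$ is a Hardy-type inequality, while the reverse is immediate from $\|\Delta_i^m(h,\Omega)\cdot\|\le \omega_m^i(h)$. Combined with the estimates above for $\widetilde G_i$, this gives $\|G_i\|_{L^{\theta_2}(dh/h)}\le c\,\|G_i\|_{L^{\theta_1}(dh/h)}$, and adding back the common $L_p(\Omega)$ term completes the proof. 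The main obstacle is this last equivalence of seminorms, which is standard but requires care, particularly at the endpoint $\theta=1$, where a direct dyadic argument, rather than Hardy's inequality, should be invoked.
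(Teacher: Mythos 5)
The paper offers no proof of this lemma: it is quoted verbatim from \cite[Ch.~4, Th.~18.8]{BIN}, so there is no in-text argument to compare against. Your proposal is, in outline, the standard textbook proof of the embedding $B^l_{p,\theta_1}(\Omega)\subset B^l_{p,\theta_2}(\Omega)$, and its core steps are sound: you correctly observe that $L^{\theta_1}(dh/h)\subset L^{\theta_2}(dh/h)$ fails on the infinite-mass space $((0,\infty),dh/h)$, you correctly extract the $L^\infty$ bound $\|\widetilde G_i\|_{L^\infty}\lec\|\widetilde G_i\|_{L^{\theta_1}(dh/h)}$ by integrating the two-sided comparison over a dyadic interval of $dh/h$-mass $\log 2$, and the interpolation $\|\widetilde G_i\|_{\theta_2}^{\theta_2}\le\|\widetilde G_i\|_{\infty}^{\theta_2-\theta_1}\|\widetilde G_i\|_{\theta_1}^{\theta_1}$ together with $\ell^{\theta_1}\hookrightarrow\ell^{\theta_2}$ over the finite index set $i=1,\dots,n$ finishes the argument at the level of $\widetilde G_i$.

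The one place where the real work sits is exactly the step you defer: the equivalence $\|G_i\|_{L^\theta(dh/h)}\sim\|\widetilde G_i\|_{L^\theta(dh/h)}$. Be aware that this is not a soft fact about a function and its monotone majorant --- for a general nonnegative $\phi$ concentrated on a null set, $\sup_{t\le h}\phi(t)$ can be large while $\int(h^{-\alpha}\phi)^\theta\,dh/h$ vanishes --- so the proof must use the structure of $h\mapsto\|\Delta_i^m(h,\Omega)\partial_{x_i}^ku\|_{L_p}$, specifically a reverse estimate bounding this quantity by an average of itself over $h$-values of comparable size (the ``averaged modulus'' equivalence). That equivalence is classical and is precisely what \cite{BIN} and \cite{G} establish when showing the norm is independent of the choice of $m$ and $k$, so deferring it is legitimate, but it is where the theorem actually lives. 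Two further small cautions: the inequality $\omega_m^i(2h_0)\le 2^m\omega_m^i(h_0)$ is the standard subadditivity $\omega_m(nt)\le n^m\omega_m(t)$ rather than a Marchaud inequality (Marchaud compares moduli of different orders), and on the bounded domain $\Omega$ with the convention $\Delta_i^j(h,\Omega)u=0$ when $x+jh\notin\Omega$ this subadditivity requires a word of justification, since the translation identity underlying it can move points out of $\Omega$. Neither issue is fatal; both are handled in the cited literature.
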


\begin{lemma}(see \cite[Ch. 4, Th. 18.9]{BIN})\label{l2.16}
Let $l\in\N$ and $\Omega$ satisfy the $l$-horn condition.\\
Then the following imbeddings hold
$$\eqal{
&\|u\|_{B_{p,2}^l(\Omega)}\le c\|u\|_{W_p^l(\Omega)}\le c\|u\|_{B_{p,p}^l(\Omega)},\ \ &1\le p\le 2,\cr
&\|u\|_{B_{p,p}^l(\Omega)}\le c\|u\|_{W_p^l(\Omega)}\le c\|u\|_{B_{p,2}^l(\Omega)},\ \ &2\le p<\infty,\cr
&\|u\|_{B_{p,\infty}^l(\Omega)}\le c\|u\|_{W_p^l(\Omega)}\le c\|u\|_{B_{p,1}^l(\Omega)},\ \ &1\le p\le\infty.\cr}
$$
\end{lemma}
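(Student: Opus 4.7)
The plan is to reduce the claimed embeddings to the whole-space case via a bounded linear extension operator $E : W_p^l(\Omega) \to W_p^l(\mathbb{R}^3)$ that simultaneously extends $B_{p,q}^l(\Omega)$ into $B_{p,q}^l(\mathbb{R}^3)$ for the relevant $q$. The $l$-horn condition on $\Omega$ is precisely the geometric hypothesis that guarantees the existence of such an extension up to smoothness order $l$, and it is also what ensures that the difference-quotient definition of $B_{p,q}^l(\Omega)$ from Definition~\ref{d2.11} agrees with the Littlewood--Paley definition of the Besov norm of $Eu$ restricted back to $\Omega$. Once the inequalities are established on $\mathbb{R}^3$, composition with $E$ and restriction yield the claims on $\Omega$, so the remainder of the argument takes place on the full space.

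On $\mathbb{R}^3$, I would invoke the Littlewood--Paley dyadic decomposition $u = \sum_{j\ge -1} \Delta_j u$ and the Fourier-analytic characterizations
\[
\|u\|_{B_{p,q}^l} \simeq \bigl\| \{2^{jl}\|\Delta_j u\|_{L_p}\}_j \bigr\|_{\ell^q}, \qquad \|u\|_{F_{p,q}^l} \simeq \bigl\| \{2^{jl}|\Delta_j u|\}_j \bigr\|_{L_p(\ell^q)}.
\]
For $1<p<\infty$ and integer $l$, the Mikhlin--H\"ormander multiplier theorem (or, equivalently, the Littlewood--Paley square-function inequality) gives $W_p^l = F_{p,2}^l$. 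Minkowski's integral inequality, applied to $\|\cdot\|_{L_p(\ell^q)}$ versus $\|\cdot\|_{\ell^q(L_p)}$, then produces the sandwich $B_{p,\min(p,2)}^l \hookrightarrow F_{p,2}^l \hookrightarrow B_{p,\max(p,2)}^l$. Specialized to $1\le p\le 2$ this reads $B_{p,p}^l \hookrightarrow W_p^l \hookrightarrow B_{p,2}^l$, and to $2\le p<\infty$ it reads $B_{p,2}^l \hookrightarrow W_p^l \hookrightarrow B_{p,p}^l$, giving the first two displayed inequalities. The boundary $p=1$ in the first line is then recovered by the same Minkowski argument, which does not actually require $p>1$ because the Sobolev norm is only needed on the larger side.

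For the third line, valid for the full range $1\le p \le \infty$, the embedding $W_p^l \hookrightarrow B_{p,\infty}^l$ is the elementary bound $\sup_j 2^{jl}\|\Delta_j u\|_{L_p} \lesssim \|u\|_{W_p^l}$, which uses only the $L_p$-boundedness of the smooth frequency projectors (verified at $p=1,\infty$ by direct kernel estimates). The embedding $B_{p,1}^l \hookrightarrow W_p^l$ then follows from the triangle inequality applied termwise to the dyadic decomposition of $D^\alpha u$ with $|\alpha|\le l$. The main obstacle, and the point at which the $l$-horn geometry is essential, is the equivalence on $\Omega$ between the intrinsic difference-based Besov norm of Definition~\ref{d2.11} and the Littlewood--Paley norm used after extension; this equivalence fails for domains with cusps or insufficiently regular boundary of order less than $l$, which is why the hypothesis cannot be relaxed. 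Once this equivalence is in hand (as proved in \cite{BIN}), the whole-space inequalities transfer directly to $\Omega$ and the lemma follows.
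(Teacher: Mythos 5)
The paper does not actually prove this lemma: it is quoted verbatim from \cite[Ch.~4, Th.~18.9]{BIN}, whose proof proceeds via integral representations and moduli of smoothness directly on the horn domain. Your route --- extend to $\R^3$, identify $W_p^l$ with the Triebel--Lizorkin space $F_{p,2}^l$ via Littlewood--Paley/Mikhlin, and sandwich $B_{p,\min(p,2)}^l\hookrightarrow F_{p,2}^l\hookrightarrow B_{p,\max(p,2)}^l$ by Minkowski's inequality --- is the standard modern alternative, and it is sound for $1<p<\infty$ in the first two lines and for all $p$ in the third line, provided you grant (as you do, citing \cite{BIN}) that the $l$-horn condition yields an extension operator bounded simultaneously on $W_p^l(\Omega)$ and on the difference-quotient Besov spaces of Definition~\ref{d2.11}. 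That last point is not a formality: it is essentially the hard, domain-dependent content of the theorem, and your proof outsources it to the very reference being proved; but since the paper itself treats the whole lemma as a black box, this is an acceptable division of labour.

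There is, however, a genuine gap at the endpoint $p=1$ of the first line. Your remark that ``the Sobolev norm is only needed on the larger side'' disposes only of the half $\|u\|_{W_1^l}\le c\|u\|_{B_{1,1}^l}$, which is indeed a triangle-inequality statement. The other half, $\|u\|_{B_{1,2}^l}\le c\|u\|_{W_1^l}$, requires the inequality $\|u\|_{F_{1,2}^l}\lesssim\|u\|_{W_1^l}$, i.e.\ an $L^1$ bound on the Littlewood--Paley square function of an $L^1$ function; this fails, since $W_1^l\ne F_{1,2}^l$ (the correct identification at $p=1$ involves the local Hardy space). Concretely, reducing the $B_{1,2}^l$ seminorm to $\bigl(\intop_0^1\omega_1(\partial^{l-1}u,h)_{L_1}^2\,h^{-1}\,\d h\bigr)^{1/2}$ shows one would need a quantitative decay of the $L_1$ modulus of continuity of an arbitrary $L_1$ derivative, which is not available. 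So your argument does not establish that endpoint (and it is doubtful it holds as stated); fortunately the paper only ever invokes the lemma for $p\in(1,\infty)$, so nothing downstream is affected.
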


Consider the nonstationary Stokes system in $\Omega\subset\R^3$:
\[
\begin{split}
v_t-\nu\Delta v+\nabla p&=f,\\
\div\, v&=0
\end{split}
\]
with the boundary conditions \eqref{bcs} and given initial condition $v(0)$.

\begin{lemma}[see \cite{MS}]\label{l2.17}
Assume that $f\in L_{q,r}(\Omega^T)$, $v(0)\in B_{q,r}^{2-2/r}(\Omega)$, $r,q\in(1,\infty)$. Then there exists a unique solution to the above system such that $v\in W_{q,r}^{2,1}(\Omega^T)$, $\nabla p\in L_{q,r}(\Omega^T)$ with the following estimate
\begin{equation}\eqal{
&\|v\|_{W_{q,r}^{2,1}(\Omega^T)}+\|\nabla p\|_{L_{q,r}(\Omega^t)}\le c(\|f\|_{L_{q,r}(\Omega^T)}+\|v(0)\|_{B_{r,q}^{2-2/r}(\Omega)}).\cr}
\label{2.25}
\end{equation}
\end{lemma}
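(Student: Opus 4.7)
The plan is to prove the lemma by the classical two-step maximal regularity strategy for the nonstationary Stokes system. First I would reduce to the case of zero initial data by constructing an auxiliary divergence-free extension $V\in W^{2,1}_{q,r}(\Omega^T)$ of $v(0)$ respecting the boundary conditions \eqref{bcs}; the existence of such $V$ with the estimate $\|V\|_{W^{2,1}_{q,r}(\Omega^T)} \lec \|v(0)\|_{B^{2-2/r}_{q,r}(\Omega)}$ follows from the inverse trace theorem, Lemma~\ref{l2.13}(ii), applied componentwise, followed by a Bogovski\u{\i}-type correction to restore $\div V=0$ while preserving the boundary conditions. Setting $w\coloneqq v-V$ then yields a Stokes problem with zero initial data and modified forcing $\tilde f\coloneqq f-V_t+\nu\Delta V$, whose $L_{q,r}(\Omega^T)$-norm is controlled by the right-hand side of \eqref{2.25}.

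Second, for the homogenized problem I would apply the Helmholtz decomposition on $\Omega$ associated with the boundary conditions \eqref{bcs} to remove the pressure. Writing $\tilde f=P\tilde f+\nabla\pi_0$, where $P$ is the corresponding Leray projection, the equation for $w$ becomes the abstract evolution equation
\[
w_t+Aw = P\tilde f,\qquad w(0)=0,
\]
with $A\coloneqq-\nu P\Delta$ the Stokes operator under \eqref{bcs}. The target estimate \eqref{2.25} is then equivalent to maximal $L_r(L_q)$-regularity of $A$, which, by Weis's theorem, is equivalent to $R$-boundedness of the resolvent family $\{\lambda(\lambda+A)^{-1}:\lambda\in\Sigma_\theta\}$ on a sector of angle $\theta>\pi/2$. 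This $R$-boundedness I would obtain by the Denk--Hieber--Pr\"uss method, once the Stokes system with the boundary conditions \eqref{bcs} has been verified to be parameter-elliptic in the Agmon--Douglis--Nirenberg sense. The pressure $\nabla p$ is then recovered from $\nabla p = \nabla\pi_0+\nabla\pi_1$, where $\nabla\pi_1$ comes from the standard de Rham argument applied to $w_t-\nu\Delta w-\tilde f$, and satisfies the required $L_{q,r}$ bound.

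The main obstacle will be verifying the Lopatinski\u{\i}--Shapiro complementing condition for the boundary conditions \eqref{bcs}, especially on $S_2$, where the conditions $v_z=\omega_\varphi=v_{\varphi,z}=0$ involve derivatives of tangential components and so differ from classical Dirichlet data. Concretely, I would freeze coefficients at an interior point of $S_2$, pass to the half-space $\{x_3>0\}$, take Fourier transform in $(x_1,x_2)$, and check directly that the resulting ODE system on $\R_+$ has a unique decaying solution for every choice of boundary data; this is the algebraic condition guaranteeing parameter-ellipticity. A parallel verification is needed on $S_1$, where $v_r=v_\varphi=0$ is Dirichlet for two components while $\omega_\varphi=0$ acts as a Neumann-type constraint on $v_z$.

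Finally, the dihedral edge $S_1\cap S_2$ is non-smooth, so one cannot directly invoke smooth-boundary theorems. For the specific exponents $q,r\in(1,\infty)$ under consideration I would use the reflection $x_3\mapsto -x_3$ across $S_2$, which is compatible with \eqref{bcs} (it preserves $v_r,v_\varphi$ as even functions of $x_3$ and sends $v_z$ to an odd function), to convert the problem into a Stokes problem on the extended smooth cylinder with only the Dirichlet-type conditions on $S_1$; the classical theory then applies, and uniqueness follows by a duality argument against the adjoint Stokes system.
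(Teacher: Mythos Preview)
The paper does not prove this lemma at all: it is stated with the attribution ``see \cite{MS}'' (Maremonti--Solonnikov) and used as a black box in Section~\ref{sec_reg}. Your proposal, by contrast, sketches a full derivation via the abstract maximal-regularity machinery (reduction to zero data by inverse trace, Helmholtz projection, $R$-boundedness and Weis's theorem, Lopatinski\u{\i}--Shapiro verification, and a reflection to handle the dihedral edge). So the ``comparison'' is simply that the paper cites the result while you attempt to prove it.

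On the substance of your sketch: the overall strategy is standard and reasonable, but two steps are looser than they appear. First, the Bogovski\u{\i} correction you invoke after the inverse-trace extension does not in general preserve the boundary conditions \eqref{bcs}; the Bogovski\u{\i} operator has no trace control, so the corrected field $V$ need not satisfy $v_r=v_\varphi=\omega_\varphi=0$ on $S_1$ or the analogous conditions on $S_2$. In this axisymmetric setting one would rather build $V$ directly as a divergence-free field via a stream-function/swirl ansatz compatible with \eqref{bcs}, and then invoke the inverse trace componentwise on $(\psi,v_\varphi)$. Second, the reflection $x_3\mapsto -x_3$ you describe handles at most one face of $S_2$; to eliminate both faces $z=\pm a$ one performs even/odd extension across each and passes to the periodic cylinder of height $4a$, after which only the smooth lateral boundary $S_1$ remains and the cited smooth-domain theory applies. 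These are repairable, but as written the proposal glosses over them; in any event none of this is in the paper.
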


\subsection{Proof of \eqref{7.1}}

We show \eqref{7.1} in the following series of lemmas.

\begin{lemma}\label{l7.1}
Suppose that $X(t)<\infty$, i.e. that 
\begin{equation}
\|\Phi\|_{V(\Omega^t)}+\|\Gamma\|_{V(\Omega^t)}\le\phi_1,
\label{7.2}
\end{equation}
where $\phi_1$ depends only on the initial data and the forcing. Assume that 
$$
f\in W_2^{2,1}(\Omega^t),\quad v(0)\in W_2^3(\Omega).
$$
Then
\begin{equation}
\|v\|_{W_2^{4,2}(\Omega^t)}+\|\nabla p\|_{W_2^{2,1}(\Omega^t)}\le\phi(\phi_1,\|f\|_{W_2^{2,1}(\Omega^t)},\|v(0)\|_{H^3(\Omega)}).
\label{7.3}
\end{equation}
\end{lemma}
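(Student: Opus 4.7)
The strategy is a two-stage bootstrap: first use the finiteness of $X$ together with the vorticity, swirl, and stream-function estimates already built up in Sections~\ref{sec_psi1}--\ref{sec_order_red} to place $v$ in $L^\infty_t H^1 \cap L^2_t H^2$; then iterate the linear Stokes regularity result of Lemma~\ref{l2.17} on the equation
\[
v_t - \nu\Delta v + \nabla p = f - (v\cdot\nabla)v,\qquad \div v = 0,
\]
with boundary conditions \eqref{bcs}, climbing up to $W^{4,2}_2(\Omega^t)$.

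\textbf{Stage 1: from $X<\infty$ to $H^2$ regularity of $v$.} From $\|\Gamma\|_{V(\Omega^t)} \le \phi_1$ and $\omega_\varphi = r\Gamma$ I obtain $\omega_\varphi \in L^\infty_t L^2 \cap L^2_t H^1$. Plugging $|\Gamma_{,r}|_{2,\Omega^t} + |\Gamma_{,z}|_{2,\Omega^t} \le \phi_1$ into the order-reduction estimate \eqref{6.1} of Lemma~\ref{l6.1} gives $\|\omega_r\|_{V(\Omega^t)} + \|\omega_z\|_{V(\Omega^t)} \le C(\phi_1)$. The elliptic estimates \eqref{psi1_h2}, \eqref{3.13}--\eqref{3.14} for $\psi_1$ in terms of $\Gamma$, together with the representation \eqref{1.22}, then yield bounds on $(v_r,v_z)$ in $L^\infty_t H^1 \cap L^2_t H^2$, while Lemma~\ref{l5.1} (together with the maximum principle \eqref{2.9}) furnishes the analogous bounds for $v_\varphi = u/r$. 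Consequently $v \in L^\infty_t H^1(\Omega)\cap L^2_t H^2(\Omega)$.

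\textbf{Stage 2: Stokes bootstrap.} Viewing $(v,p)$ as the solution of the nonstationary Stokes system above with source $g \coloneqq f-(v\cdot\nabla)v$, I apply Lemma~\ref{l2.17} repeatedly, raising the integrability exponents $(q,r)$ at each step. At the first pass the Sobolev embeddings applied to Stage~1 give $v\in L^\infty_t L^6\cap L^2_t W^{1,6}$, so $(v\cdot\nabla)v\in L^2_t L^3\subset L^2(\Omega^t)$; since $f\in W^{2,1}_2(\Omega^t)\subset L^2(\Omega^t)$ and $v(0)\in H^3(\Omega)\subset B^{1}_{2,2}(\Omega)$, Lemma~\ref{l2.17} with $q=r=2$ gives $v\in W^{2,1}_2(\Omega^t)$, $\nabla p\in L^2(\Omega^t)$. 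Differentiating the system in $x$ and $t$ (the homogeneous slip-type boundary conditions \eqref{bcs} are preserved under such differentiation together with the expansions \eqref{2.19}--\eqref{2.23} near the axis) produces further linear Stokes systems for $\partial_j v$ and $\partial_t v$ whose right-hand sides are estimated via the anisotropic interpolation Lemma~\ref{l2.12} and the Besov embeddings of Lemmas~\ref{l2.14}--\ref{l2.16}, using the regularity just obtained. Iterating one more round, and invoking the trace theory of Lemma~\ref{l2.13} to handle initial data at each level, I close the estimates at the level $v\in W^{4,2}_2(\Omega^t)$, $\nabla p\in W^{2,1}_2(\Omega^t)$, proving \eqref{7.3}.

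\textbf{Main obstacle.} The delicate point is the nonlinear term at the top of the bootstrap: to place $D^4 v,\,\partial_t^2 v$ in $L^2(\Omega^t)$ one must control $D^2\!\bigl((v\cdot\nabla)v\bigr)$ and $\partial_t\bigl((v\cdot\nabla)v\bigr)$ in $L^2$, which requires simultaneous mixed-norm bounds on $v,\nabla v,\nabla^2 v,\partial_t v$ that only become available through a carefully chosen sequence of intermediate $W^{2,1}_{q,r}$ estimates rather than a single step from $W^{2,1}_2$. The axisymmetric boundary conditions and the near-axis expansions \eqref{2.19}--\eqref{2.23} ensure that each differentiated Stokes problem remains within the setting of Lemma~\ref{l2.17}, so that the $1/r$ factors arising from the cylindrical coordinates do not obstruct the estimates.
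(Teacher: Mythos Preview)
Your overall strategy---extract starting regularity of $v$ from the $\psi_1$ and $\Gamma$ estimates of Sections~\ref{sec_psi1}--\ref{sec_order_red}, then bootstrap through the linear Stokes theory of Lemma~\ref{l2.17}---is exactly the paper's approach. The execution, however, differs in two respects.

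First, in Stage~1 you claim $v\in L^\infty_t H^1\cap L^2_t H^2$, invoking Lemma~\ref{l5.1} for $v_\varphi=u/r$. This is slightly overstated: Lemma~\ref{l5.1} controls $u$, not $u/r$, and the extra factor $1/r$ produces terms like $v_\varphi/r$ whose $L^\infty_t L^2$ bound is not directly available. The paper sidesteps this by noting that the advection is $v'\cdot\nabla v$ with $v'=v_r\bar e_r+v_z\bar e_z$ (axisymmetry), and uses only $v'\in L^\infty_t H^1\hookrightarrow L^\infty_t L^6$, which follows cleanly from \eqref{1.22} and Lemma~\ref{l3.1}. Your argument is easily repaired by making the same restriction to $v'$.

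Second, and more substantively, your Stage~2 proposes to climb from $W^{2,1}_2$ to $W^{4,2}_2$ by \emph{differentiating} the Stokes system. The paper does not differentiate; it runs a pure mixed-norm bootstrap, applying Lemma~\ref{l2.17} in the chain
\[
W^{2,1}_{3/2,2}\;\to\;W^{2,1}_{30/17,5/2}\;\to\;W^{2,1}_{15/7,10/3}\;\to\;W^{2,1}_{5'}\;\to\;W^{3,3/2}_{10/3}\;\to\;W^{4,2}_2,
\]
using at each step the embedding $W^{2,1}_{q,r}\hookrightarrow L^s_{t,x}$ for $\nabla v$ together with $|v'|_{6,\infty}$ to upgrade the integrability of $v'\cdot\nabla v$. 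Only after reaching $W^{2,1}_{5'}$ (which gives $v\in L^\infty$, $\nabla v\in L^q$ for all finite $q$) does one pass to higher order, and then via higher-order Stokes regularity on the source $\nabla(v'\cdot\nabla v)$, $\partial_t^{1/2}(v'\cdot\nabla v)$ rather than by differentiating the problem. Your differentiation route is in principle workable, but the statement that ``the homogeneous slip-type boundary conditions~\eqref{bcs} are preserved under such differentiation'' is false for the normal derivatives ($\partial_r$ on $S_1$, $\partial_z$ on $S_2$); only tangential and time derivatives preserve \eqref{bcs}. You would therefore need to split into tangential differentiation plus recovery of normal derivatives from the equation, which you have not indicated. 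Your ``Main obstacle'' paragraph correctly identifies that intermediate mixed-norm steps are unavoidable, so the cleanest fix is simply to follow the paper's $L^{q,r}$ ladder and drop the differentiation altogether.
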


\begin{proof}
From (\ref{7.2}) we have
\begin{equation}
\|\Gamma\|_{V(\Omega^t)}\le\phi_1.
\label{7.4}
\end{equation}
Lemma \ref{l2.7} and Section \ref{s3} imply
\begin{equation}
\|\psi_1\|_{2,\infty,\Omega^t}\le c\phi_1,
\label{7.5}
\end{equation}
\begin{equation}
\|\psi_1\|_{3,2,\Omega^t}\le c\phi_1.
\label{7.6}
\end{equation}
From (\ref{1.22}) the following relations hold
\begin{equation}
v_r=-r\psi_{1,z},\quad v_z=2\psi_1+r\psi_{1,r}.
\label{7.7}
\end{equation}
Hence (\ref{7.5}) and $R$ finite imply
\begin{equation}
\|v_r\|_{1,2,\infty,\Omega^t}+\|v_z\|_{1,2,\infty,\Omega^t}\le c\phi_1.
\label{7.8}
\end{equation}
To increase regularity of $v$ we consider the Stokes problem
\begin{equation}\eqal{
&v_{,t}-\nu\Delta v+\nabla p=-v'\cdot\nabla v+f\quad &{\rm in}\ \ \Omega^T,\cr
&\divv v=0\quad &{\rm in}\ \ \Omega^T,\cr
&v\cdot\bar n=0,\ \ v_{z,r}=0,\ \ v_\varphi=0\quad &{\rm on}\ \ S_1^T,\cr
&v\cdot\bar n=0,\ \ v_{r,z}=0,\ \ v_{\varphi,z}=0\quad &{\rm on}\ \ S_2^T,\cr
&v|_{t=0}=v(0)\quad &{\rm in}\ \ \Omega,\cr}
\label{7.9}
\end{equation}
where $v'=v_r\bar e_r+v_z\bar e_z$.

From (\ref{7.8}) we have
\begin{equation}
|v'|_{6,\infty,\Omega^t}\le c\phi_1.
\label{7.10}
\end{equation}
For solutions to (\ref{7.9}) the following energy estimate holds
\begin{equation}
\|v\|_{V(\Omega^t)}\le c(|f|_{2,\Omega^t}+|v(0)|_{2,\Omega})\equiv d_1.
\label{7.11}
\end{equation}
Hence, (\ref{7.11}) yields
\begin{equation}
|\nabla v|_{2,\Omega^t}\le d_1.
\label{7.12}
\end{equation}
Estimates (\ref{7.10}) and (\ref{7.12}) imply
\begin{equation}
|v'\cdot\nabla v|_{3/2,2,\Omega^t}\le c\phi_1d_1.
\label{7.13}
\end{equation}
Applying \cite{MS} to (\ref{7.9}) yields
\begin{equation}\eqal{
&\|v\|_{W_{{3\over 2},2}^{2,1}(\Omega^t)}+|\nabla p|_{3/2,2,\Omega^t}\cr
&\le c(|f|_{3/2,2,\Omega^t}+\|v(0)\|_{B_{3/2,2}^1(\Omega)}+\phi_1d_1)\equiv d_2.\cr}
\label{7.14}
\end{equation}
In view of the imbedding (see \cite[Ch. 3, Sect. 10]{BIN})
\begin{equation}
|\nabla v|_{5/2,\Omega^t}\le c\|v\|_{W_{3/2,2}^{2,1}(\Omega^t)}
\label{7.15}
\end{equation}
and (\ref{7.10}) we derive that
\begin{equation}|v'\cdot\nabla v|_{{30\over 17},{5\over 2},\Omega^t}\le c\phi_1d_2.
\label{7.16}
\end{equation}
Then applying again \cite{MS} to problem (\ref{7.9}) yields
\begin{equation}\eqal{
&\|v\|_{W_{{30\over 17},{5\over 2}}^{2,1}(\Omega^t)}+|\nabla p|_{{30\over 17},{5\over 2},\Omega^t}\cr
&\le c(|f|_{{30\over 17},{5\over 2},\Omega^t}+\|v(0)\|_{B_{{30\over 17},{5\over 2}}^{2-4/5}(\Omega)}+\phi_1d_2)\equiv d_3.\cr}
\label{7.17}
\end{equation}
In view of the imbedding (see \cite[Ch. 3, Sect. 10]{BIN})
\begin{equation}
|\nabla v|_{{10\over 3},\Omega^t}\le c\|v\|_{W_{{30\over 17},{5\over 2}}^{2,1}(\Omega^t)}
\label{7.18}
\end{equation}
and (\ref{7.10}) we have
\begin{equation}
|v'\cdot\nabla v|_{{15\over 7},{10\over 3},\Omega^t}\le c\phi_1d_3.
\label{7.19}
\end{equation}
Applying \cite{MS} to (\ref{7.9}) implies
\begin{equation}\eqal{
&\|v\|_{W_{{15\over 7},{10\over 3}}^{2,1}(\Omega^t)}+|\nabla p|_{{15\over 7},{10\over 3},\Omega^t}\cr
&\le c(|f|_{{15\over 7},{10\over 3},\Omega^t}+\|v(0)\|_{B_{{15\over 7},{10\over 3}}^{2-6/10}(\Omega)}+\phi_1d_3)=d_4.\cr}
\label{7.20}
\end{equation}
Lemma \ref{l2.13} yields
\begin{equation}
\|v\|_{L_\infty(0,t;B_{{15\over 7},{10\over 3}}^{2-6/10}(\Omega))}\le c\|v\|_{W_{{15\over 7},{10\over 3}}^{2,1}(\Omega^t)}.
\label{7.21}
\end{equation}
Theorem 18.10 from \cite{BIN} gives
\begin{equation}
|v(t)|_{q,\Omega}\le c\|v\|_{B_{{15\over 7},{10\over 3}}^{7/5}(\Omega)}.
\label{7.22}
\end{equation}
The estimate holds for any finite $q$ because it satisfies the relation $7/5\ge 7/5-3/q$.

Next, we use the imbedding (see \cite[Ch. 3, Sect. 10]{BIN})
\begin{equation}
|\nabla v|_{5,\Omega^t}\le c\|v\|_{W_{{15\over 7},{10\over 3}}^{2,1}(\Omega^t)}.
\label{7.23}
\end{equation}
From (\ref{7.22}) and (\ref{7.23}) we have
\begin{equation}
|v\cdot\nabla v|_{5',\Omega^t}\le cd_4^2,
\label{7.24}
\end{equation}
where $5'<5$ but it is arbitrary close to 5.

In view of (\ref{7.24}) and \cite{MS} we have
\begin{equation}\eqal{
&\|v\|_{W_{5'}^{2,1}(\Omega^t)}+|\nabla p|_{5',\Omega^t}\le c(|f|_{5',\Omega^t}+\|v(0)\|_{W_{5'}^{2-2/5'}(\Omega)}+d_4^2)\equiv d_5.\cr}
\label{7.25}
\end{equation}
From (\ref{7.25}) it follows that $v\in L_\infty(\Omega^t)$ and $\nabla v\in L_q(\Omega^t)$ for any finite $q$.

Then
$$
|\nabla(v'\cdot\nabla v)|_{5',\Omega^t}\le cd_5^2
$$
and
$$
|\partial_t^{1/2}(v'\cdot\nabla v)|_{5',\Omega^t}\le cd_5^2,
$$
where $\partial_t^{1/2}$ denotes the fractional partial derivative in time.

Then \cite{MS} implies
\begin{equation}\eqal{
&\|v\|_{W_{10/3}^{3,3/2}(\Omega^t)}+\|\nabla p\|_{W_{10/3}^{1,1/2}(\Omega^t)}\cr
&\le c(\|f\|_{W_{10/3}^{1,1/2}(\Omega^t)}+\|v(0)\|_{W_{10/3}^{3-2/5'}(\Omega)}+d_5^2)\equiv d_6.\cr}
\label{7.26}
\end{equation}
Continuing the considerations yields
\begin{equation}
\|v\|_{W_2^{4,2}(\Omega^t)}+\|\nabla p\|_{W_2^{2,1}(\Omega^t)}\le c(\|f\|_{W_2^{2,1}(\Omega^t)}+ \|v(0)\|_{W_2^3(\Omega)}+d_6^2).
\label{7.27}
\end{equation}
This implies (\ref{7.3}) and ends the proof.
\end{proof}

\section*{Conflict of interest statement}
The authors report there are no competing interests to declare. 

\section*{Data availability statement}
The authors report that there is no data associated with this work.

\bibliographystyle{amsplain}
\begin{thebibliography}{99}

\bibitem[BIN]{BIN} Besov, O.V.; Il'in, V.P.; Nikolskii, S.M.: Integral Representations of Functions and Imbedding Theorems, Nauka, Moscow 1975 (in Russian); English transl. vol. I. Scripta Series in Mathematics. V.H. Winston, New York (1978).

\bibitem[B]{B} Bugrov, Ya.S.: Function spaces with mixed norm, Izv. AN SSSR, Ser. Mat. 35 (1971), 1137--1158 (in Russian); English transl.: Math. USSR -- Izv., 5 (1971), 1145--1167.

\bibitem[CKN]{CKN} Caffarelli, L.; Kohn, R.V.; Nirenberg, L.: Partial regularity of suitable weak solutions of the Navier-Stokes equations, Comm. Pure Appl. Math. 35 (1982), 771--831.

\bibitem[CFZ]{CFZ} Chen, H.; Fang, D.; Zhang, T.: Regularity of 3d axisymmetric Navier-Stokes equations, Disc. Cont. Dyn. Syst. 37 (4) (2017), 1923--1939.

\bibitem[G]{G} Golovkin, K.K.: On equivalent norms for fractional spaces, Trudy Mat. Inst. Steklov 66 (1962), 364--383 (in Russian); English transl.: Amer. Math. Soc. Transl. 81 (2) (1969), 257--280.

\bibitem[KP]{KP} Kreml, O.; Pokorny, M.: A regularity criterion for the angular velocity component in axisymmetric Navier-Stokes equations, Electronic J. Diff. Eq. vol 2007 (2007), No. 08, pp.1--10.

\bibitem[L1]{L1} Ladyzhenskaya, O.A.: Unique global solvability of the three-dimensional Cauchy problem for
the Navier-Stokes equations in the presence of axial symmetry. Zap. Naučn. Sem. Leningrad.
Otdel. Mat. Inst. Steklov. (LOMI), 7: 155–177, 1968. English version available in ``Boundary Value Problems of Mathematical Physics and Related Aspects of Function Theory'', II, Steklov Inst. Seminars in
Mathematics, Leningrad, Vol. 7, translated by Consultants Bureau, New York. 1970.

\bibitem[L2]{L2} Ladyzhenskaya, O.A.: ``The mathematical theory of viscous incompressible flow'', second edition, Moscow, 1970.

\bibitem[LW]{LW} Liu, J.G.; Wang, W.C.: Characterization and regularity for axisymmetric solenoidal vector fields with application to Navier-Stokes equations, SIAM J. Math. Anal. 41 (2009), 1825--1850.

\bibitem[LZ]{LZ} Lei, Z.; Zhang, Q. S.: Criticality of the axially symmetric {N}avier-{S}tokes equations. Pacific J. Math., 289 (1) (2017), 169--187.

\bibitem[MS]{MS} Maremonti, P.; Solonnikov, V.A.: On the estimates of solutions of evolution Stokes problem in anisotropic Sobolev spaces with mixed norm, Zap. Nauchn. Sem. LOMI 223 (1994), 124--150.

\bibitem[NP1]{NP1} Neustupa, J.; Pokorny, M.: An interior regularity criterion for an axially symmetric suitable weak solutions to the Navier-Stokes equations, J. Math. Fluid Mech. 2 (2000), 381--399.

\bibitem[NP2]{NP2} Neustupa, J.; Pokorny, M.: Axisymmetric flow of Navier-Stokes fluid in the whole space with non-zero angular velocity component, Math. Bohemica 126 (2001), 469--481.

\bibitem[NZ]{NZ} Nowakowski, B.; Zaj\c{a}czkowski, W.M.: On weighted estimates for the stream function of axially symmetric solutions to the Navier-Stokes equations in a bounded cylinder, doi:10.48550/arXiv.2210.15729. Appl. Math. 50.2 (2023), 123--148, doi: 10.4064/am2488-1-2024.

\bibitem[NZ1]{NZ1} Nowakowski, B,; Zaj\c{a}czkowski, W.M.: Global regular axially-symmetric solutions to the Navier-Stokes equations with small swirl, J. Math. Fluid Mech. (2023), 25:73.

\bibitem[OP]{OP} O\.za\'nski, W. S.; Palasek, S.: Quantitative control of solutions to the axisymmetric Navier-Stokes equations in terms of the weak $L^3$ norm, Ann. PDE 9:15 (2023), 1--52.

\bibitem[T]{T} Triebel, H.: Interpolation Theory, Functions Spaces, Differential Operators, North-Holand, Amsterdam (1978).

\bibitem[W]{W} Wei, D.: Regularity criterion to the axially symmetric {N}avier-{S}tokes equations, J. Math. Anal. Appl. 435 (1) (2016), 402--413.

\bibitem[Z1]{Z1} Zaj\c{a}czkowski, W.M.: Global regular axially symmetric solutions to the Navier-Stokes equations. Part 1, Mathematics 2023, 11(23), 4731, https//doi.org/10.3390/math11234731; also available at arXiv.2304.00856.

\bibitem[Z2]{Z2} Zaj\c{a}czkowski, W.M.: Global regular axially symmetric solutions to the Navier-Stokes equations. Part 2, Mathematics 2024, 12(2), 263, https//doi.org/10.3390/math12020263.
\end {thebibliography}
\end{document}